\newcommand{\NN}{\mathbb{N}}
\newtheorem{theorem}{Theorem}[section]
\newtheorem{lemma}[theorem]{Lemma}
\newtheorem{proposition}[theorem]{Proposition}
\newtheorem{corollary}[theorem]{Corollary}
\newtheorem{definition}[theorem]{Definition}
\newtheorem{example}[theorem]{Example}
\newtheorem{remark}[theorem]{Remark}
\newcommand{\spb}[1]{\smallskip}
\newcommand{\mpb}[1]{\medskip}
\newcommand{\bpb}[1]{\bigskip}
\renewcommand{\d}{\delta}
\newcommand{\D}{\Delta}
\newcommand{\g}{\gamma}
\newcommand{\s}{\sigma}
\begin{document}
\DeclareGraphicsExtensions{.jpg,.pdf,.mps,.png}

\title{Generalized chordality, vertex separators and hyperbolicity on graphs}

\author[Alvaro Mart\'{\i}nez-P\'erez]{Alvaro Mart\'{\i}nez-P\'erez}
\address{ Facultad CC. Sociales de Talavera,
Avda. Real Fábrica de Seda, s/n. 45600 Talavera de la Reina, Toledo, Spain}
\email{alvaro.martinezperez@uclm.es}

\date{\today}


\begin{abstract} Let $G$ be a graph with the usual shortest-path metric. A graph is $\delta$-hyperbolic if for every geodesic triangle $T$, any side of $T$ is contained in a $\delta$-neighborhood of the union of the other two sides. A graph is chordal if every induced cycle has at most three edges. A vertex separator set in a graph is a set of vertices that disconnects two vertices. In this paper we study the relation between vertex separator sets, some chordality properties which are natural generalizations of being chordal and the hyperbolicity of the graph. We also give a  characterization of being quasi-isometric to a tree in terms of chordality and prove that this condition also characterizes being hyperbolic, when restricted to triangles, and having stable geodesics, when restricted to bigons.
\end{abstract}

\maketitle{}

{\it Keywords: Infinite graph, geodesic, Gromov hyperbolic, chordal, Bottleneck property, vertex separator.} 

{\it 2010 AMS Subject Classification numbers:} Primary: 05C63; 05C75. Secondary: 05C38; 05C12.

\section{Introduction}

The theory of Gromov hyperbolic spaces was introduced by M. Gromov for the study of finitely generated groups (see \cite{Gr}). Since then, this theory has been developed from a geometric point of view to the extent of making hyperbolic spaces an important class of metric spaces to be studied on their own (see, for example, \cite{B-H,Bu-Bu,BS,G-H,Vai}). 
In the last years, Gromov hyperbolicity has been intensely studied in graphs (see \cite{BRS2,BRS,BRSV,CPRS,CRSV,CDEHV,K50,Ha,MRSV,PeRSV,PRSV,PRT1,PRT2,PT,RS,RSTY,RSVV,Si,T}). Gromov hyperbolicity, specially in graphs, has found applications in different areas such as phylogenetics (see \cite{DHH,DMT}), complex networks (see \cite{CMN,KP,Sha1,Sha2}) or the secure transmission of information and virus propagation on networks (see \cite{K21,K22}).

Given a metric space $(X,d)$,  a \emph{geodesic} from $x\in X$ to $y\in X$ is an isometry, $\gamma$, from a closed interval $[0,l]\subset \mathbb{R}$ to $X$ such that $\gamma(0)=x$, $\gamma(l)=y$. We will also call geodesic to the image of $\gamma$. $X$ is a geodesic metric space if for every $x,y\in X$ there exists a geodesic joining $x$ and $y$; any of these geodesics will be denoted as $[xy]$ although this notation is ambiguous since geodesics need not be unique.

Herein, we consider the graphs always equipped with a length metric where every edge $e\in E(G)$ has length 1. The interior points of the edges are also considered points in $G$. Then, for any pair of points in $x,y \in G$,  the distance $d(x,y)$ will be the length of the shortest path in 
$G$ joining $x$ and $y$. In this case, the graph is a geodesic metric space. Let us also assume that the graphs are connected.

There are several definitions of Gromov $\delta$-hyperbolic space  which are equivalent although the 
constant $\delta$ may appear multiplied by some constant (see \cite{BS}). We are going to use the characterization of 
Gromov hyperbolicity for geodesic metric spaces given by the Rips condition on the geodesic triangles. 
If $X$ is a geodesic metric space and $x_1,x_2,x_3\in X$, the union
of three geodesics $[x_1 x_2]$, $[x_2 x_3]$ and $[x_3 x_1]$ is called a
\emph{geodesic triangle} and will be denoted by $T=\{x_1,x_2,x_3\}$. $T$ is $\delta$-{\it thin} 
if any side of $T$ is contained in the
$\delta$-neighborhood of the union of the two other sides.  
The space $X$ is $\delta$-\emph{hyperbolic} if every geodesic triangle in $X$ is $\delta$-thin. We
denote by $\delta(X)$ the sharp hyperbolicity constant of $X$, i.e.
$\delta(X):=\inf\{\delta \, | \, \text{every triangle in $X$ is $\delta$-thin}\}.$ We say that $X$ is \emph{hyperbolic} if $X$ is
$\delta$-hyperbolic for some $\delta \geq 0$. 
A triangle with two identical vertices is called a \emph{bigon}.

A graph $G$ is \emph{chordal} if every induced cycle has at most three edges. In \cite{BKM}, the authors prove that chordal graphs are hyperbolic  giving an upper bound for the hyperbolicity constant. In \cite{WZ}, Wu and Zhang extend this result for a generalized version of chordality. They prove that $k$-chordal graphs are hyperbolic where a graph is $k$-chordal if every induced cycle has 
at most $k$ edges. In \cite{B}, the authors define the more general properties of being $(k,m)$-edge-chordal and $(k,\frac{k}{2})$-path-chordal and prove that every $(k,m)$-edge-chordal graph is hyperbolic and that every hyperbolic graph is $(k,\frac{k}{2})$-path-chordal. In \cite{MP}, we continue this work and define being $\varepsilon$-densely $(k,m)$-path-chordal and $\varepsilon$-densely $k$-path-chordal. In \cite{B} and \cite{MP}, edges where aloud to have any finite length but in this work we assume that all edges have length one. Therefore, the distinction between edge and path is unnecessary and these properties are referred as $(k,m)$-chordal and $\varepsilon$-densely 
$k$-chordal. The main results in \cite{MP} (with this simplified notation) state that 
\[(k,1)\mbox{-chordal} \Rightarrow  \varepsilon\mbox{-densely } (k,m)\mbox{-chordal} \Rightarrow \delta\mbox{-hyperbolic} \]
and
\[\delta\mbox{-hyperbolic} \Rightarrow  \varepsilon\mbox{-densely } k\mbox{-chordal} \Rightarrow k\mbox{-chordal}.\]
We also provide examples showing that for all these implications the converse is not true and we give a characterization of hyperbolicity on graphs in terms of a chordality property on the triangles.

Herein, we continue this study analysing some relations between these properties and vertex separators. There are some well known relations between chordality and vertex separators. For example, G. A. Dirac proved in \cite{D} that a graph is chordal if and only every minimal
vertex separator is complete. See also \cite{KM,SV} and \cite{BP} and the references therein. Our main results are the following.

In Section \ref{S:2}  we prove that being $(k,1)$-chordal implies that every minimal vertex separator has uniformly bounded diameter. We also obtain that, for uniform graphs, if every minimal vertex separator has uniformly bounded diameter, then the graph is $\varepsilon$-densely $(k,m)$-chordal, and therefore hyperbolic.

Section \ref{S:3} studies the relation between generalized chordality and Bottleneck Property, which is an important property on hyperbolic geodesic spaces.  J. Manning defined it in \cite{Man} and proved that a geodesic metric space satisfies (BP) if and only if it is quasi-isometric to a tree. This characterization has proved to be very useful, see for example \cite{BBF}. For some other relations with (BP) see \cite{Cas,MP12} and the references therein.

Here, we prove that a graph satisfies (BP) if and only if it is $\varepsilon$-densely $(k,m)$-chordal, providing a characterization of being quasi-isometric to a tree in terms of chordality. 
Also, the characterization of hyperbolicity from \cite{MP} is re-written obtaining that a graph is hyperbolic if and only if it is $\varepsilon$-densely $(k,m)$-chordal on the cycles that are geodesic triangles. 

Furthermore, we prove that if $G$ is a uniform graph and every minimal vertex separator has uniformly bounded diameter, then the graph satisfies (BP) and, therefore, it is quasi-isometric to a tree. Finally, we prove directly that being $(k,1)$-chordal implies $(BP)$.

In Section \ref{S:4} we generalize the concept of vertex separators defining vertex $r$-separators. It is proved that if, in a uniform graph, all minimal vertex $r$-separators have uniformly bounded diameter, then the graph is $\varepsilon$-densely $(k,m)$-chordal, and therefore, quasi-isometric to a tree. 

Section \ref{S:5} introduces neighbor separators, generalizing also vertex separators. This concept allows to characterize $(BP)$ in terms of having a neighbor-separator vertex. 

In section \ref{S:6} we define neighbor obstructors. We use them to characterize the graphs where geodesics between vertices are stable and to prove that geodesics between vertices are stable if and only if the graph is $\varepsilon$-densely $(k,m)$-chordal on the bigons defined by two vertices. We also prove that, in general, geodesics are stable if and only if the graph is $\varepsilon$-densely $(k,m)$-chordal on the bigons.

\section{Generalized chordality and minimal vertex separators}\label{S:2}

We are assuming that every path if finite and simple, this is, it has finite length and distinct vertices. By a cycle in a graph we mean a simple closed curve, this is, a path defined by a sequence of vertices which are all different except for the first one and the last one which are the same.

Let $\gamma$ be a path or a cycle. A \emph{shortcut} in  $\gamma$ is a path $\sigma$ joining two vertices $p,q$ in $\gamma$ such that $L(\sigma)<d_\gamma(p,q)$ where $L(\s)$ denotes the length of the path $\s$ and $d_\gamma$ denotes the length metric on $\gamma$. A shortcut $\sigma$ in $\gamma$ is \emph{strict} if $\sigma \cap \gamma=\{p,q\}$. In this case, we say that $p$, $q$ are 
\emph{shortcut vertices} in $\gamma$ associated to $\sigma$. A shortcut with length $k$ is called a $k$-\emph{shortcut}.

\begin{remark}\label{R:strict} Suppose $\sigma$ is a $k$-shortcut in a cycle $C$ joining two vertices, $p,q$. Then, $\sigma$ contains an strict shortcut and there are two shortcut vertices $p',q'$ such that $d_C(p,p'),d_C(q,q')< k$. 
\end{remark}

\begin{definition} A metric graph $G$ is $k$-\emph{chordal} if for any cycle $C$ in $G$ with $L(C)\geq k$ there exists a
shortcut $\sigma$ of $C$.
\end{definition}

\begin{definition} A metric graph $G$ is $(k,m)$-\emph{chordal} if for any cycle $C$ in $G$ with $L(C)\geq k$ there exists a
shortcut $\sigma$ of $C$ such that $L(\sigma)\leq m$. Notice that being chordal is equivalent to being $(4,1)$-chordal.
\end{definition}

\begin{remark}\label{k 4} Notice that in the definitions of $k$-chordal and $(k,m)$-chordal it makes no sense to consider $k\leq 3$ nor $k< 2m$. Therefore, let us assume always that $k\geq 4$ and $k\geq2m$.
\end{remark}

Given a metric space $(X,d)$ and any $\varepsilon>0$, a subset $A\subset X$ is $\varepsilon$-\emph{dense} if for every $x\in X$ there exists some $a\in A$ such that $d(a,x)<\varepsilon$.

\begin{definition} A metric graph $(G,d)$ is \emph{$\varepsilon$-densely $k$-chordal} if for every cycle $C$ with length $L(C)\geq k$, there exist strict shortcuts $\sigma_1,...,\sigma_r$  such that their associated shortcut vertices define an $\varepsilon$-dense subset in $(C,d_C)$.
\end{definition}

\begin{definition} A graph $(G,d)$ is \emph{$\varepsilon$-densely $(k,m)$-chordal} if for every cycle $C$ with length $L(C)\geq k$, there exist strict shortcuts $\sigma_1,...,\sigma_r$ with $L(\sigma_i)\leq m$ $\forall \, i$ and such that their associated shortcut vertices define an $\varepsilon$-dense subset in $(C,d_C)$.
\end{definition}

\begin{definition} A subset $S\subset V(G)$ is a \emph{separator} if $G\setminus S$ has at least two connected components. Two vertices $a$ and $b$ are \emph{separated by $S$} if they are in different connected components of $G\setminus S$. If $a$ and $b$ are two vertices separated by $S$ then $S$ is said to be an \emph{$ab$-separator}.
\end{definition}

Let us call a path joining the vertices $a,b$ an \emph{$ab$-path}. 

\begin{definition} $S$ is a \emph{minimal separator} if no proper subset of $S$ is a separator. Similarly, $S$ is a \emph{minimal $ab$-separator} if no proper subset of $S$ separates $a$ and $b$. Finally, $S$ is a \emph{minimal vertex separator} if it is a minimal separator for some pair of vertices. 
\end{definition}

Note that being a minimal vertex separator does not imply being a minimal separator. See Figure \ref{Ejp_1}.

\begin{figure}[ht]
\centering
\includegraphics[scale=0.3]{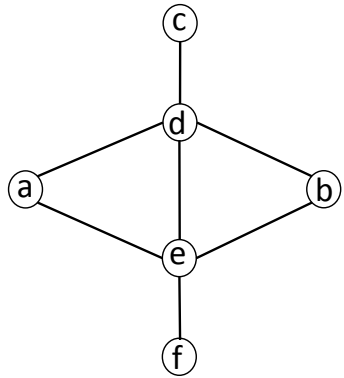}
\caption{The set $\{d,e\}$ is a minimal $ab$-separator but it is not a minimal separator.}
\label{Ejp_1}
\end{figure}

\begin{remark} Let $S$ be a minimal $ab$-separator and let $G_a$, $G_b$ be the connected components of $G\setminus S$ containing $a$ and $b$ respectively. Then, notice that every vertex $v$ in $S$ is adjacent to both $G_a$ and $G_b$. Otherwise, $S\setminus \{v\}$ is an $ab$-separator.   
\end{remark}

\begin{proposition}\label{P: chordal separator} If $G$ is $(k,1)$-chordal, then every minimal vertex separator has diameter less than $\frac{k}{2}$. 
\end{proposition}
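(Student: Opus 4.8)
The plan is to argue by contradiction. A minimal vertex separator is, by definition, a minimal $ab$-separator for some pair $a,b$; fix such a pair and let $G_a,G_b$ be the connected components of $G\setminus S$ containing $a$ and $b$. Assume there are $u,v\in S$ with $d(u,v)\ge \frac{k}{2}$; I will produce a cycle through $u$ and $v$ of length at least $k$ with no $1$-shortcut, contradicting $(k,1)$-chordality.

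For the construction, recall from the remark preceding the statement that every vertex of $S$, and in particular $u$ and $v$, is adjacent both to $G_a$ and to $G_b$. Hence the subgraphs $H_a$ and $H_b$ of $G$ induced by $V(G_a)\cup\{u,v\}$ and by $V(G_b)\cup\{u,v\}$ are connected; let $P$ be a shortest $uv$-path in $H_a$ and $Q$ a shortest $uv$-path in $H_b$. Since $V(G_a)\cap V(G_b)=\emptyset$ and $S\cap(V(G_a)\cup V(G_b))=\emptyset$, the paths $P$ and $Q$ meet only at $u$ and $v$, so $C:=P\cup Q$ is a cycle. Using $H_a,H_b\subseteq G$ we get $L(P)=d_{H_a}(u,v)\ge d(u,v)\ge\frac{k}{2}$ and likewise $L(Q)\ge\frac{k}{2}$; moreover $\frac{k}{2}\ge 2$ forces $u,v$ non-adjacent, so $P$ and $Q$ have no common edge and $L(C)=L(P)+L(Q)\ge k$.

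Now apply $(k,1)$-chordality to $C$: there is a shortcut $\sigma$ of length at most $1$, necessarily an edge $\{x,y\}$ with $x,y$ on $C$ and $d_C(x,y)>L(\sigma)$, hence $d_C(x,y)\ge 2$. The decisive observation is that $G$ has no edge between $V(G_a)$ and $V(G_b)$, while every vertex of $C$ lies in $V(G_a)\cup V(G_b)\cup\{u,v\}$; therefore $x$ and $y$ lie together in $V(H_a)$ or together in $V(H_b)$ — say in $V(H_a)$, so in fact $x,y\in V(P)$. Since $H_a$ is an induced subgraph, $\{x,y\}$ is an edge of $H_a$, so $d_{H_a}(x,y)=1$, and because subpaths of shortest paths are shortest paths, $d_P(x,y)=1$; but then $d_C(x,y)\le d_P(x,y)=1$, contradicting $d_C(x,y)\ge 2$.

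The contradiction shows $d(u,v)<\frac{k}{2}$ for every $u,v\in S$, and since these distances are integers, $\diam(S)<\frac{k}{2}$. The step I expect to require the most care is making sure that $C$ is honestly a cycle of length at least $k$ — i.e. that passing to the induced subgraphs $H_a,H_b$ does not merge the two arcs or drop their lengths below $\frac{k}{2}$ — which is exactly what $d_{H_a}(u,v)\ge d(u,v)\ge\frac{k}{2}$ secures. The rest is just the separating property of $S$ and the elementary fact that subpaths of geodesics are geodesics.
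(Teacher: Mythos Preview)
Your proof is correct and follows essentially the same approach as the paper's: both build a cycle of length at least $k$ by concatenating a shortest $uv$-path through $G_a$ with one through $G_b$, then argue that any $1$-shortcut would either cross between $G_a$ and $G_b$ (impossible) or shorten one of the two chosen paths (contradicting minimality), with the residual case $\{u,v\}$ ruled out by $d(u,v)\ge k/2\ge 2$. Your use of the induced subgraphs $H_a,H_b$ makes the minimality step slightly more explicit than the paper's phrasing, but the argument is the same.
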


\begin{proof} Let $S$ be a minimal $ab$-separator and suppose that $diam(S)\geq \frac{k}{2}$. Let $x,y\in S$ such that $d(x,y)\geq \frac{k}{2}$. Then, there are vertices $a_1,a_{n}$ in $G_a$ adjacent to $x$ and $y$ respectively and, since $G_a$ is connected, there is a path $\gamma_1=\{x,a_1,...,a_{n},y\}$ with $a_i\in G_a \, \forall 1\leq i \leq n$. Similarly, there exist vertices $b_1,b_{m}$ in $G_b$ adjacent to $y$ and $x$ respectively and a path $\gamma_2=\{y,b_1,...,b_{m},x\}$ with $b_i\in G_b \, \forall 1\leq i \leq m$. Moreover, let us assume that $\gamma_1,\gamma_2$ have minimal length. Then, $C=\gamma_1\cup \gamma_2$ defines a cycle in $G$ and since $d(x,y)\geq \frac{k}{2}$, $L(C)\geq k$. Then, since $G$ is $(k,1)$-chordal, there is a shortcut $\sigma$ in $C$ with $L(\sigma)=1$. However, since $S$ is an $ab$-separator, vertices in $G_a$ and $G_b$ can not be adjacent and since $\gamma_1,\gamma_2$ are supposed minimal, there is no possible 1-shortcut on $\gamma_i$ for $i=1,2$. Thus, $x,y$ need to be adjacent leading to contradiction. 
\end{proof}

The converse is not true. 

\begin{example}\label{Ex: not k-1} Consider the graph $G_0$ whose vertices are $V(G_0)=\{n\in \NN \, | \, n\geq 3\}$ and edges joining consecutive numbers. Now, let us define the graph $G$ such that for every $n\geq 3$, there is cycle $C_n$ whose vertices are all adjacent to the vertex $n$ in $G_0$. See Figure \ref{Not k-1}.

\begin{figure}[ht]
\centering
\includegraphics[scale=0.3]{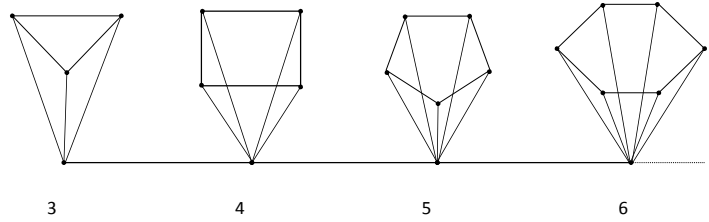}
\caption{Every minimal vertex separator has diameter at most 2, but the graph is not $(k,1)$-chordal for any $k>0$.}
\label{Not k-1}
\end{figure}

It is trivial to check that $G$ is not $(k,1)$-chordal for any $k>0$ since the cycles $C_n$ have no 1-shortcut in $G$. 

Let us see that every minimal vertex separator has diameter at most 2. Consider any pair of non-adjacent vertices $a,b$ in $G$.   

If $a,b\in C_n$ for some $n$, then every vertex separator $S$ must contain the vertex $n$ and at least two vertices $x_1,x_2$ in $C_n$. If $S$ is minimal, then $S=\{n,x_1,x_2\}$ and $diam(S)=2$.

If $a,b\notin C_n$ for any $n$, then the geodesic $[ab]$ is contained in $G_0$. Therefore, any $ab$-separator must contain some vertex $m\in [ab]$ and $m$ separates $a$ and $b$. Thus, if $S$ is minimal, then $S$ is just a vertex and $diam(S)=0$.
\end{example}

\begin{remark}\label{R:separator} Given two vertices $a,b$, a path $\gamma$ joining them and a vertex $v\in \gamma$ distinct from $a,b$, there may not exist a minimal $ab$-separator containing $\{v\}$. Consider, for example four vertices $x_0,x_1,x_2,x_3$ with edges $x_{i-1}x_i$ for every $1\leq i \leq 3$ and an edge $x_0x_2$. Then, there is no minimal $x_0x_3$-separator containing $x_1$.
\end{remark}

Given a graph $G$ and a subgraph, $A\subset G$, let us denote $V(A)$ the vertices in $A$.  

\begin{definition} A graph $\Gamma$ is said to be $\mu$-\emph{uniform} if each vertex $p$ of $V$ has at most $\mu$ neighbors, i.e.,
\[\sup\big\{|N(p)| \,  \big| \,\, p\in V(\Gamma)\big\}\leq \mu.  \]
If a graph $\Gamma$ is $\mu$-uniform for some constant $\mu$ we say that $\Gamma$ is \emph{uniform}.
\end{definition}

For any vertex $v\in V(G)$ and any constant $\varepsilon>0$, let us denote:
\[S_\varepsilon(v):=\{w\in V(G) \, | \, d(v,w)=\varepsilon\}.\]
\[B_\varepsilon(v):=\{w\in V(G) \, | \, d(v,w)<\varepsilon\}.\]
\[N_\varepsilon(v):=\{w\in V(G) \, | \, d(v,w)\leq \varepsilon\}.\]

\begin{lemma}\label{L:separator} Let $G$ be a uniform graph. Given two vertices $a,b$, a geodesic 
$[ab]$ joining them and a vertex $v_0\in [ab]$ distinct from $a,b$, then there is a minimal $ab$-separator containing $\{v_0\}$.
\end{lemma}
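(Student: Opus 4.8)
The plan is to exhibit an explicit \emph{finite} $ab$-separator that contains $v_0$ and all of whose vertices lie at the same distance from $a$ as $v_0$, and then to shrink it to a minimal one; the key point will be that the geodesic $[ab]$ meets \emph{any} such separator in exactly one vertex, which forces $v_0$ to survive the shrinking.

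Write $[ab]=\{a=u_0,u_1,\dots,u_n=b\}$ and set $r=d(a,v_0)$, so that $v_0=u_r$ and $1\le r\le n-1$ (since $v_0\neq a,b$). Put $A=B_r(a)$ and let
\[S_0=\{w\in V(G)\setminus A \ |\ w \text{ is adjacent to some vertex of } A\}\]
be the combinatorial boundary of $A$. First I would record the elementary facts: (i) every $w\in S_0$ satisfies $d(a,w)=r$, hence $S_0\subseteq S_r(a)$; since $G$ is $\mu$-uniform, $S_r(a)$ is finite (there are at most $1+\mu+\mu^2+\cdots+\mu^{r}$ vertices within distance $r$ of $a$), so $S_0$ is finite. (ii) $a\in A$, and since $d(a,b)=r+d(v_0,b)\ge r+1$ we get $b\notin A$ and $b$ has no neighbour in $A$, so $b\notin S_0$. (iii) $S_0$ is an $ab$-separator: any $ab$-path starts in $A$ and ends outside $A$, so it has a first vertex outside $A$, which is adjacent to the preceding (in-$A$) vertex and therefore lies in $S_0$. (iv) $v_0\in S_0$, because $d(a,v_0)=r$ while $v_0$ is adjacent to $u_{r-1}\in B_r(a)=A$.

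Next, by (i), (iii) and (iv) the family of subsets of $S_0$ that are $ab$-separators and contain $v_0$ is non-empty and finite; choose $S$ in this family with $|S|$ minimal. I claim $S$ is a minimal $ab$-separator. Indeed, suppose some $S'\subsetneq S$ were an $ab$-separator. By minimality of $|S|$ we cannot have $v_0\in S'$, so $v_0\notin S'$. On the other hand $S'\subseteq S\subseteq S_0\subseteq S_r(a)$, so every vertex of $S'$ is at distance exactly $r$ from $a$; but $[ab]$ contains a unique vertex at distance $r$ from $a$, namely $u_r=v_0$. Since $[ab]$ is an $ab$-path it must meet the separator $S'$, forcing $v_0\in S'$ — a contradiction. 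Hence no proper subset of $S$ is an $ab$-separator, i.e. $S$ is a minimal $ab$-separator with $v_0\in S$.

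The only place where uniformity enters is fact (i), guaranteeing that $S_0$ is finite so that a minimum-cardinality sub-separator exists; everything else is purely combinatorial. I expect the main conceptual point — and the reason the conclusion does not contradict Remark \ref{R:separator} — to be exactly the observation that lying on a geodesic pins $v_0$ down as the \emph{unique} vertex of $[ab]$ in the sphere $S_r(a)$, which is what prevents $v_0$ from being discarded during the shrinking step.
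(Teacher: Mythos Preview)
Your proof is correct and follows essentially the same approach as the paper: take the sphere $S_r(a)$ (your boundary set $S_0$ is exactly this sphere, since in a connected graph every vertex at distance $r$ from $a$ has a neighbour at distance $r-1$), use uniformity to get finiteness, and then shrink to a minimal sub-separator, observing that the geodesic $[ab]$ meets $S_r(a)$ only at $v_0$, which forces $v_0$ to remain. The paper's write-up is terser---it simply takes a minimal $ab$-separating subset of $S_0$ and notes $v_0$ must lie in it because $[ab]\cap S_0=\{v_0\}$---while you first minimise among separators containing $v_0$ and then argue minimality, but the underlying idea is identical.
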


\begin{proof} Suppose any geodesic $[ab]$ and $v_0\in [ab]$ with $0<d(a,v_0)<d(a,b)$ and define $\varepsilon=d(a,v_0)$. Since $G$ is uniform, for every vertex $v\in G$ the set $S_0:=S(v,\varepsilon)$ is finite for every $\varepsilon\in \NN$. It is immediate to check that $S_0$ is an $ab$-separator and $[ab]\cap S_0=\{v_0\}$. Since $S_0$ is finite, then there is a minimal subset $S\subset S_0$ which is also an $ab$-separator. Finally, since $[ab]\cap S_0=\{v_0\}$, $v_0\in S$.
\end{proof}

Let us recall that a graph $\Gamma$ is \emph{countable} if $|V(\Gamma)|\leq \aleph_0$, i.e. if it has a countable number of vertices.

\begin{remark} \label{R: choice} In the case of countable graphs and using the Axiom of Choice, Lemma \ref{L:separator} can be slightly improved. See Lemma \ref{L:separator 2} below.
\end{remark}

\begin{lemma}\label{L:separator 2} Let $G$ be a uniform countable graph. Given two vertices $a,b$, a path $\gamma_0$ joining them and a vertex $v_0\in \gamma_0$ distinct from $a,b$, then either there is a 1-shortcut in $\gamma_0$ or there is a minimal $ab$-separator containing $\{v_0\}$.
\end{lemma}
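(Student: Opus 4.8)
The plan is to distinguish two cases according to whether $\gamma_0$ admits a $1$-shortcut. Label the vertices of $\gamma_0$ as $a=u_0,u_1,\dots,u_N=b$, so that $v_0=u_j$ for some $0<j<N$ (hence $N=L(\gamma_0)\ge 2$, since $v_0\neq a,b$). If $\gamma_0$ has a $1$-shortcut the first alternative of the statement holds and there is nothing to prove, so assume from now on that $\gamma_0$ has none; equivalently, no two vertices $u_p,u_q$ of $\gamma_0$ with $|p-q|\ge 2$ are adjacent in $G$. Under this assumption I will exhibit a finite $ab$-separator $S_0$ meeting $V(\gamma_0)$ only at $v_0$, and then cut it down to a minimal one, in the same spirit as the proof of Lemma \ref{L:separator}.

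For the construction, set $F:=\{u_0,\dots,u_N\}\setminus\{u_j\}$ and let $S_0:=\partial F$ be the set of vertices of $V(G)\setminus F$ having a neighbour in $F$. Since $G$ is $\mu$-uniform and $F$ is finite, $S_0$ is finite. I would then verify three things: (i) $v_0\in S_0$, since $v_0=u_j\notin F$ is adjacent to $u_{j-1}\in F$; (ii) $S_0\cap V(\gamma_0)=\{v_0\}$, because $S_0\subseteq V(G)\setminus F$ while $\{u_0,\dots,u_N\}\setminus F=\{v_0\}$; (iii) $S_0$ separates $a$ from $b$. For (iii), note that $a,b\in F\subseteq V(G)\setminus S_0$ and that, by the very definition of $S_0=\partial F$, no vertex of $F$ has a neighbour in $V(G)\setminus(F\cup S_0)$; hence in $G\setminus S_0$ the component of $a$ stays inside $F$, i.e.\ it coincides with the component of $a$ in the induced subgraph $G[F]$. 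Now the no-$1$-shortcut hypothesis forces the only edges of $G[F]$ to be the edges $u_iu_{i+1}$ of $\gamma_0$ with $i,i+1\neq j$, so $G[F]$ is the disjoint union of the two subpaths $u_0\cdots u_{j-1}$ and $u_{j+1}\cdots u_N$, which puts $a=u_0$ and $b=u_N$ in different components. This gives (iii).

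To finish, since $S_0$ is finite it contains a minimal $ab$-separator $S$ (delete vertices of $S_0$ one by one, keeping only those whose removal destroys the separation; the process terminates). As $\gamma_0$ is an $ab$-path it meets $S$, and $S\cap V(\gamma_0)\subseteq S_0\cap V(\gamma_0)=\{v_0\}$, so $v_0\in S$, as wanted. If one prefers not to invoke uniformity at this last step — or to phrase matters as the Remark preceding the statement suggests — the extraction of a minimal $ab$-separator from a given one can be done for countable $G$ via Zorn's Lemma, using that the intersection of a decreasing chain of $ab$-separators is again an $ab$-separator (an $ab$-path is finite, hence already avoids some member of the chain). The only genuinely delicate point is step (iii): one uses the absence of a $1$-shortcut precisely to exclude an edge $u_pu_q$ with $p<j<q$, since such an edge would be a $1$-shortcut (its endpoints are at $\gamma_0$-distance $q-p\ge 2$) and would reconnect the two halves of $\gamma_0$ across $v_0$ inside $G[F]$, so a path from $a$ to $b$ in $G[F]$ would be forced to use one; everything else is bookkeeping.
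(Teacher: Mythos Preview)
Your proof is correct and takes a genuinely different route from the paper's. The paper, after disposing of the trivial cases, enumerates all $ab$-paths in $G\setminus\{v_0\}$ (using countability together with uniformity to guarantee there are at most countably many), chooses from each such path a vertex outside $V(\gamma_0)$, and then runs an inductive pruning procedure on this countable set to extract a minimal $ab$-separator containing $v_0$; this is where the Axiom of Choice alluded to in Remark~\ref{R: choice} enters. You instead observe that $S_0:=\partial\bigl(V(\gamma_0)\setminus\{v_0\}\bigr)$ is already a \emph{finite} $ab$-separator meeting $\gamma_0$ only at $v_0$, so one can minimise by finitely many deletions exactly as in the paper's own proof of Lemma~\ref{L:separator}. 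The point that makes your $S_0$ work, and that has no analogue in the geodesic case, is that the absence of $1$-shortcuts forces $G[F]$ to be just the two subpaths of $\gamma_0$ on either side of $v_0$; you identify and use this cleanly.

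Your argument is both shorter and stronger: it uses only uniformity (to make $\partial F$ finite) and never invokes countability or any form of choice, so it in fact proves the lemma for all uniform graphs, not just countable ones. The paper's enumeration-and-pruning approach, by contrast, genuinely needs the countable setting. Your closing remark about recovering minimality via Zorn's Lemma in the countable case is correct but, given that your $S_0$ is already finite, unnecessary here.
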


\begin{proof} If there is no $ab$-path in $G\setminus \{v_0\}$ it suffices to consider $S:=\{v_0\}$.

If there is an $ab$-path $\gamma_1$ in $G\setminus \{v_0\}$ such that $V(\gamma_1)\subset V(\gamma_0)$, then there is a 1-shortcut in $\gamma_0$.

Thus, let us suppose that every  $ab$-path $\gamma$ in $G\setminus \{v_0\}$ contains a vertex which is not in $\gamma_0$ and that there is at least one of these $ab$-paths.

Since $|V(G)|$ is countable and $G$ is uniform there exist at most $\aleph_0^k$ $ab$-paths of length $k$. Then,  there exist at most a countable number (a countable union of countable sets) of $ab$-paths, $\{\g_i\}_{i\in I\subset \NN}$ in $G\setminus \{v_0\}$ where $I=\{1,\dots, m\}$ if there exist exactly $m$ such paths or $I=\NN$ if the number of those paths is not finite. 

For every $i\in I$, consider some vertex $x_i$ in $V(\gamma_i)\setminus V(\g_0)$ and let $X=\{x_i\}_{i\in I}$.

Now let $S_0:=X$ and for every $0<i\in I$ define:
$$S_i = \left\{ 
\begin{array}{c}
S_{i-1}\setminus \{x_i\} \quad  \mbox{ if } V(\gamma_j)\cap \Big(S_{i-1}\setminus \{x_i\}\Big)\neq \emptyset \mbox{ for every } j\leq i, \\ 
\ S_{i-1} \qquad \quad \mbox{ if } V(\gamma_j)\cap \Big(S_{i-1}\setminus \{x_i\}\Big) =\emptyset \mbox{ for some } j\leq i. 
\end{array}\right. $$

Notice that for every $i$, $S_i\subset S_{i-1}$ and let $S:=\cap_{i\in I}S_i$.

\smallskip

Claim: $S$ is a minimal $ab$-separator containing $v_0$.

\smallskip

First, let us see that $S$ is an $ab$-separator. Consider any $ab$-path, $\gamma_j$. 
Suppose $V(\g_j)\cap X=\{x_{j_1},x_{j_2},...,x_{j_k}\}$ and assume $j_l<j_k$ for every $l<k$. Then, it is trivial to check that there exist some vertex $x_{j_r}\in S_{j_k}\cap V(\g_j)$ and, by construction, $x_{j_r}\in S$. 

To check that $S$ is minimal, first notice that, since $x_i\notin V(\g_0)$ for every $i\in I$, $V(\gamma_0)\cap (S\backslash \{v_0\})=\emptyset$ and $S\backslash \{v_0\}$ is not an $ab$-separator. Now, suppose that there is some vertex $x_j \in S$ with $j\in I$ such that $S\setminus \{x_j\}$ is also an $ab$-separator. Since  $x_j \in S\subset S_j$, there is some $k\leq j$ such that 
$V(\g_{k})\cap \Big(S_{j-1}\setminus \{x_j\}\Big)= \emptyset$ and, in particular, 
$V(\g_{k})\cap \Big(S\setminus \{x_j\}\Big)= \emptyset$ leading to contradiction.

Thus, $S$ is a minimal $ab$-separator containing $v_0$. 
\end{proof}

\begin{theorem}\label{T: separator} Let $G$ be a uniform graph. If every minimal vertex separator in $G$ has diameter at most $m$, then $G$ is 
$(m+\epsilon)$-densely $(4m,2m-1)$-chordal for any $\epsilon>\frac12$. 
\end{theorem}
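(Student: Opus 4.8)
The plan is to take an arbitrary cycle $C$ with $L(C)\geq 4m$, pick an arbitrary point $p\in C$, and produce a strict shortcut of length $\leq 2m-1$ whose shortcut vertices come within $m+\epsilon$ of $p$; doing this for every $p$ gives the $(m+\epsilon)$-dense family of shortcut vertices. So fix a vertex $p_0$ on $C$ (a nearby vertex if $p$ is interior to an edge; this costs at most an additive $1$, absorbed into $\epsilon>\frac12$ together with rounding). Since $L(C)\geq 4m$, the two points of $C$ at $C$-distance exactly $2m$ from $p_0$ along the two arcs of $C$ are distinct; call the arc of $C$ of length $2m$ starting at $p_0$ going one way $\alpha$, ending at a vertex $q_0$, so $d_C(p_0,q_0)=2m$ and $d(p_0,q_0)\leq 2m$. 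The key point is that $d(p_0,q_0)$ cannot be too large \emph{and} that a minimal separator threading between them lives near $p_0$.

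First I would handle the easy case: if $d(p_0,q_0)<2m$, i.e. there is a genuine shortcut $\sigma$ between $p_0$ and $q_0$ with $L(\sigma)\leq 2m-1$, then by Remark \ref{R:strict} $\sigma$ contains a strict shortcut whose shortcut vertices $p',q'$ satisfy $d_C(p_0,p'),d_C(q_0,q')<2m-1<2m$, so in particular $p'$ lies on the arc $\alpha$ and $d_C(p_0,p')<2m$... but that is not yet within $m+\epsilon$. So this crude choice of arc length is wrong; instead I would choose the arc $\alpha$ to have length exactly $m$ (not $2m$) — wait, then $d(p_0,q_0)\leq m$ and a shortcut has length $\leq m-1$, too short to be useful when $d(p_0,q_0)=m$. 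The right move: consider the vertex $v_0$ on $C$ at $C$-distance $m$ from $p_0$ along one arc. If $v_0$ is \emph{not} a cut point separating the two $C$-arcs, i.e. there is a path in $G\setminus\{v_0\}$ realizing a shorter route, one extracts a shortcut near $p_0$. Otherwise $v_0$ lies on some geodesic between well-chosen endpoints and Lemma \ref{L:separator} gives a minimal $ab$-separator $S\ni v_0$; since $\diam(S)\leq m$, every vertex of $S$ is within $m$ of $v_0$, hence within $2m$ of $p_0$.

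The cleaner route, which I expect to be the intended one: let $v_0\in C$ with $d_C(p_0,v_0)=m$ and let $u_0\in C$ with $d_C(p_0,u_0)=m$ on the \emph{other} arc, so $d_C(u_0,v_0)=2m$ and (if $L(C)>2m$, which holds) $u_0\neq v_0$. Apply Lemma \ref{L:separator} to a geodesic $[u_0 v_0]$: actually I should apply it to the subpath of $C$ from $u_0$ to $v_0$ \emph{through} $p_0$ only if it is geodesic, which need not hold. When it is not geodesic, $d(u_0,v_0)<2m$ yields a shortcut $\sigma$ of $C$ (the two $C$-arcs from $u_0$ to $v_0$ have lengths $2m$ and $L(C)-2m\geq 2m$, both $\geq 2m>d(u_0,v_0)$), and by Remark \ref{R:strict} a strict shortcut with shortcut vertices $u',v'$ within $C$-distance $<2m$ of $u_0,v_0$; one of these, say on the $p_0$-side, is within $m+\epsilon$ of $p_0$, and trimming $\sigma$ to connect $u'$ to a point on $\alpha$ within $m$ of $p_0$ keeps length $\leq 2m-1$. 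When the $C$-path through $p_0$ \emph{is} geodesic, $p_0$ itself lies on a geodesic between two vertices, so Lemma \ref{L:separator} gives a minimal $ab$-separator $S\ni p_0$ with $\diam S\leq m$; I then build a cycle through $p_0$ using two minimal paths in the components $G_a,G_b$ joining $p_0$ to another vertex $w\in S$ (which exists since $\diam S\leq m$ forces $|S|\geq 2$ once we pick $a,b$ far enough apart, using $L(C)\geq 4m$), exactly as in the proof of Proposition \ref{P: chordal separator}; this cycle has length $\leq 2\diam(S)+2\leq 2m+2$... I then intersect with $C$ to extract a short strict shortcut near $p_0$.

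I expect the main obstacle to be bookkeeping the constants: reconciling the additive error from passing between arbitrary points $p$ of $C$ and vertices $p_0$, the diameter bound $m$ on $S$ translated into a $C$-distance bound, and the bound $2m-1$ (rather than $2m$) on the shortcut length — it is presumably here that the strict inequality $\diam(S)<\tfrac k2$-style gap and the hypothesis $\epsilon>\tfrac12$ are consumed. I would carry out the argument by first reducing to the case $L(C)\geq 4m$ with $p_0$ a vertex, then splitting on whether the relevant sub-arc of $C$ is geodesic, invoking Remark \ref{R:strict} in the non-geodesic case and Lemma \ref{L:separator} together with the separator-diameter hypothesis (mimicking Proposition \ref{P: chordal separator}) in the geodesic case, and finally checking in each case that the produced strict shortcut has length $\leq 2m-1$ and a shortcut vertex within $m+\epsilon$ of the originally chosen point. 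Iterating over a net of points of $C$ spaced less than $m+\epsilon$ apart yields the $\varepsilon$-dense family of shortcut vertices, completing the proof.
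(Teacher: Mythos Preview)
Your proposal eventually arrives at the right setup --- center on a vertex $p_0$ of $C$, take vertices $u_0,v_0$ at $C$-distance $m$ on either side, and split according to whether the arc $\gamma_0$ of $C$ through $p_0$ is geodesic --- and this is precisely the paper's framework. But in the geodesic case you miss the one-line observation that finishes the argument, and replace it with a detour that you do not complete.

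Once Lemma~\ref{L:separator} produces a minimal $u_0v_0$-separator $S\ni p_0$ with $\diam S\le m$, the point is that the \emph{other} arc $\gamma_1$ of $C$ is itself a $u_0v_0$-path, so it must meet $S$ at some vertex $w$. Then $d(p_0,w)\le\diam S\le m$ while $d_C(p_0,w)>m$ (since $w\notin\gamma_0$), so a geodesic from $p_0$ to $w$ is an $m$-shortcut in $C$ with $p_0$ itself (or a vertex within $C$-distance $<m$ of it, via Remark~\ref{R:strict}) as shortcut vertex. That is the whole argument. Your proposed route --- building an auxiliary cycle through $p_0$ using paths in $G_a,G_b$ to another vertex of $S$ and then ``intersecting with $C$'' --- is unnecessary, and you never say how that intersection would yield a shortcut in $C$ of the required length near $p_0$. (Incidentally, your claim that $\diam S\le m$ forces $|S|\ge 2$ is not the right reason; what forces $|S|\ge 2$ is simply that $\gamma_1$ is a $u_0v_0$-path avoiding $p_0$.)

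In the non-geodesic case, applying Remark~\ref{R:strict} to a geodesic $[u_0v_0]$ gives shortcut vertices only within $C$-distance $<2m-1$ of $u_0$ or $v_0$, which is too coarse to land within $m+\epsilon$ of $p_0$; the ``trimming'' you allude to is not explained. The paper's observation here is that a shortcut \emph{in $\gamma_0$} already has both endpoints on $\gamma_0$, and every vertex of $\gamma_0$ is at $C$-distance $\le m$ from the midpoint $p_0$; since $L(\gamma_0)=2m$, the shortcut has length $\le 2m-1$.
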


\begin{proof} Let $C$ be any cycle with $L(C)\geq 4m$.
Let $v$ be any vertex in $C$ and let $a,b$ be the two vertices in $C$ such that 
$d_C(a,v) =m=d_C(v,b)$. Let $\gamma_0$ be the $ab$-path 
in $C$ containing $v$. Then by Lemma \ref{L:separator}, either there is a shortcut in $\gamma_0$ or there is a minimal $ab$-separator containing $v$. 

If there is a shortcut in $\gamma_0$ then it has length at most $2m-1$. Therefore, it defines a shortcut in $C$ with an associated shortcut vertex $v'$ such that $d_C(v,v')\leq m$. 
Suppose, otherwise, that $S$ is a minimal $ab$-separator containing $v$. By hypothesis, $diam(S)\leq m$. Let $\gamma_1$ 
be the $ab$-path in $C$ not containing $v$. Since $S$ is an $ab$-separator, there is some vertex $w\in S\cap V(\gamma_1)$ and $d(v,w)\leq m <d_C(v,w)$. Hence, there is an $m$-shortcut in $C$ joining $v$ to $w$ and, by Remark \ref{R:strict}, an associated shortcut vertex $v'$ such that $d_C(v,v')<m$.

Thus, for every vertex $v$, there is a shortcut vertex $v'$ such that $d_C(v,v')\leq m$ and therefore, shortcut vertices define a $(m+\epsilon)$-dense subset in $C$ for any $\varepsilon >\frac12$.
\end{proof}

If the graph is countable, then we can improve quantitatively this result.

\begin{theorem}\label{T: separator 2} Let $G$ be a uniform countable graph. If every minimal vertex separator in $G$ has diameter at most $m$, then $G$ is 
$(m+\epsilon)$-densely $(2m+2,m)$-chordal for any $\epsilon>\frac12$. 
\end{theorem}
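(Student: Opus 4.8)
The plan is to follow exactly the scheme of Theorem~\ref{T: separator}, but to exploit the countability hypothesis by invoking Lemma~\ref{L:separator 2} in place of Lemma~\ref{L:separator}. The improvement comes from the fact that Lemma~\ref{L:separator 2} applies to an arbitrary $ab$-path $\gamma_0$ (not just a geodesic) and in the "bad" case directly produces a $1$-shortcut in $\gamma_0$ — so we may take $a,b$ much closer to $v$ along $C$, namely at distance only roughly $m/2$... wait, more carefully: we want the two arcs of $C$ between $a$ and $b$ to be realisable, and the separator vertex $w$ on the far arc to satisfy $d(v,w)\le m < d_C(v,w)$. So I would set $a,b\in C$ with $d_C(a,v)=d_C(v,b)$ chosen so that $d_C(v,w)>m$ is guaranteed for $w$ on the complementary arc; taking the two arc-halves of length slightly more than $m$ forces $d_C(v,w)\ge m+1>m$. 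The key saving is on the length threshold: we no longer need $L(C)\ge 4m$, only $L(C)\ge 2m+2$, because the arc containing $v$ need only have length $m+1$ on each side of $v$ rather than being long enough to also host a geodesic shortcut of length up to $2m-1$; and any shortcut we find in $\gamma_0$ now has length $1$, not $2m-1$.

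First I would let $C$ be a cycle with $L(C)\ge 2m+2$ and fix a vertex $v\in C$. Choose $a,b\in C$ (vertices, possibly after a harmless adjustment) with $d_C(a,v)=d_C(b,v)$ roughly $m+1$ — small enough that $L(\gamma_0)\le 2m+2\le L(C)$ holds, where $\gamma_0$ is the sub-arc of $C$ from $a$ to $b$ through $v$, and large enough that any vertex $w$ on the complementary arc $\gamma_1$ has $d_C(v,w)\ge m+1>m$. Apply Lemma~\ref{L:separator 2} to $a$, $b$, $\gamma_0$, $v$. In the first alternative there is a $1$-shortcut in $\gamma_0$; since $\gamma_0\subset C$ this is a $1$-shortcut of $C$ whose associated strict shortcut vertices (via Remark~\ref{R:strict}) lie within distance $m$ of $v$ along $C$, indeed within $d_C(a,v)$. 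In the second alternative there is a minimal $ab$-separator $S$ with $v\in S$ and, by hypothesis, $\diam(S)\le m$; since $S$ is an $ab$-separator and the two arcs of $C$ are $ab$-paths, $S$ must meet $\gamma_1$ at some $w$, giving $d(v,w)\le m<d_C(v,w)$, i.e. an $m$-shortcut of $C$, and by Remark~\ref{R:strict} an associated strict shortcut vertex $v'$ with $d_C(v,v')<m$.

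Putting the two cases together, every vertex $v\in C$ has a strict shortcut vertex $v'$ of some strict shortcut of length $\le m$ with $d_C(v,v')\le m$; collecting these over all $v\in V(C)$ (there are finitely many) yields finitely many strict shortcuts $\sigma_1,\dots,\sigma_r$ with $L(\sigma_i)\le m$ whose shortcut vertices form a $(m+\epsilon)$-dense subset of $(C,d_C)$ for any $\epsilon>\tfrac12$ — the extra $\tfrac12$ absorbing the discretisation when $v$ is an interior point of an edge rather than a vertex. Hence $G$ is $(m+\epsilon)$-densely $(2m+2,m)$-chordal. I do not expect a genuine obstacle here; the one point requiring a little care is the choice of the split point distances $d_C(a,v)=d_C(b,v)$ so that simultaneously $\gamma_0$ fits inside $C$ (needing $L(C)\ge 2m+2$) and every point of the far arc $\gamma_1$ is more than $m$ from $v$ in $d_C$ (so that the separator vertex $w$ really does give a shortcut), together with the routine bookkeeping between vertices and interior edge-points that produces the $+\epsilon$.
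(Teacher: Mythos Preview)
Your proposal is correct and follows essentially the same approach as the paper. The paper makes the precise choice $d_C(a,v)=d_C(v,b)=m$ (rather than your tentative $m+1$): then $L(\gamma_0)=2m$, the threshold $L(C)\ge 2m+2$ guarantees $\gamma_1$ has an interior vertex, and since $a,b\notin S$ the separator vertex $w$ lies in the interior of $\gamma_1$ with $d_C(v,w)\ge m+1>m$, while in the $1$-shortcut alternative the shortcut vertex sits at $d_C$-distance at most $m$ from $v$---exactly the constants required for $(m+\epsilon)$-density.
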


\begin{proof} Let $C$ be any cycle with $L(C)\geq 2m+2$.
Let $v$ be any vertex in $C$ and let $a,b$ be the two vertices in $C$ such that 
$d_C(a,v) =m=d_C(v,b)$. Let $\gamma_0$ be the $ab$-path 
in $C$ containing $v$. Then by Lemma \ref{L:separator 2}, either there is a 1-shortcut in $\gamma_0$ or there is a minimal $ab$-separator containing $v$. 

If there is a 1-shortcut in $\gamma_0$ then, in particular, there is an associated shortcut vertex $v'$ such that $d_C(v,v')\leq m$. 
Suppose, otherwise, that $S$ is a minimal $ab$-separator containing $v$. By hypothesis, $diam(S)\leq m$. Let $\gamma_1$ 
be the $ab$-path in $C$ not containing $v$. Since $S$ is an $ab$-separator, there is some vertex $w\in S\cap V(\gamma_1)$ and $d(v,w)\leq m <d_C(v,w)$. Hence, there is an $m$-shortcut in $C$ joining $v$ to $w$ and, by Remark \ref{R:strict}, an associated shortcut vertex $v'$ such that $d_C(v,v')< m$.

Thus, for every vertex $v$, there is a shortcut vertex $v'$ such that $d_C(v,v')\leq m$ and therefore, shortcut vertices define a $(m+\epsilon)$-dense subset in $C$ for any $\varepsilon >\frac12$.
\end{proof}

Let us recall the following result:

\begin{theorem}\cite[Theorem 4]{MP}\label{Th: hyperbolic} If $G$ is $\varepsilon$-densely $(k,m)$-chordal, then $G$ is hyperbolic. Moreover,  $\delta(G)\leq \max\{\frac{k}{4}, \varepsilon+m\}$.
\end{theorem}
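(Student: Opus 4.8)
Although Theorem~\ref{Th: hyperbolic} is established in \cite{MP}, I outline how one would prove it, since the same scheme recurs below. The plan is to verify the Rips thin-triangles condition directly: given a geodesic triangle and a point $p$ on one of its sides, one extracts a cycle of length at least $4\,d(p,A)$ (with $A$ the union of the other two sides) and then uses the $\varepsilon$-dense family of short shortcuts from the hypothesis to find a shortcut vertex near $p$ whose associated shortcut reaches $A$. Concretely: fix a geodesic triangle $T=\{x_1,x_2,x_3\}$ and a point $p\in[x_2x_3]$, put $A:=[x_1x_2]\cup[x_1x_3]$ and $\rho:=d(p,A)$, and aim to show $\rho\le\max\{\frac{k}{4},\varepsilon+m\}$. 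Note $A$ is connected (both pieces contain $x_1$) and $x_2,x_3\in A$; one first treats the case in which $x_1,x_2,x_3$ are vertices of $G$.

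Moving along $[x_2x_3]$ from $p$ toward $x_2$, let $a$ be the first vertex of $G$ lying in $A$ (one exists, since $x_2\in A$), and define $b$ analogously moving from $p$ toward $x_3$. Let $\alpha$ be the subarc of $[x_2x_3]$ from $a$ to $b$, so that $p\in\alpha$, and let $\beta$ be a simple path contained in $A$ joining $a$ to $b$; such a $\beta$ exists because $A$ is a connected graph, for instance following the side of $T$ through $a$ toward $x_1$ and then the side through $b$ back from $x_1$, deleting any backtracking. By the minimality in the choice of $a$ and $b$, and because two distinct edges of a graph meet only in common vertices, $\alpha$ meets $A$ only at $a$ and $b$; hence $\alpha\cap\beta=\{a,b\}$ and $C:=\alpha\cup\beta$ is a cycle. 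Since $a,b\in A$ we have $d(p,a),d(p,b)\ge\rho$, and as $p$ lies on the geodesic $[x_2x_3]$ between $a$ and $b$ we get $L(\alpha)=d(a,b)=d(p,a)+d(p,b)\ge 2\rho$; moreover $L(\beta)\ge d(a,b)=L(\alpha)$, so $L(C)\ge 4\rho$ and $L(\alpha)\le\frac{1}{2}L(C)$. If $L(C)<k$, then $\rho<\frac{k}{4}$ and we are done; so assume $L(C)\ge k$.

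Since $G$ is $\varepsilon$-densely $(k,m)$-chordal and $L(C)\ge k$, there are strict shortcuts of $C$ of length at most $m$ whose associated shortcut vertices form an $\varepsilon$-dense subset of $(C,d_C)$. As $p\in C$, some such shortcut vertex $v$ satisfies $d(p,v)\le d_C(p,v)<\varepsilon$, and it carries a strict shortcut $\sigma$ joining it to a second shortcut vertex $v'$ with $L(\sigma)\le m$, $\sigma\cap C=\{v,v'\}$, and $L(\sigma)<d_C(v,v')$. If $v\in\beta$, then $v\in A$ and $\rho\le d(p,v)<\varepsilon$. Otherwise $v\in\alpha\setminus\{a,b\}$, and then $v'\notin\alpha$: if $v$ and $v'$ both lay on the geodesic subarc $\alpha$, the subarc of $\alpha$ between them would have length $d(v,v')\le L(\alpha)\le\frac{1}{2}L(C)$, so $d_C(v,v')=d(v,v')$, which contradicts $L(\sigma)\ge d(v,v')$ together with $L(\sigma)<d_C(v,v')$. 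Hence $v'\in\beta\subset A$, and $\rho\le d(p,v')\le d(p,v)+L(\sigma)<\varepsilon+m$. In every case $\rho<\max\{\frac{k}{4},\varepsilon+m\}$; since $p$ and the side were arbitrary, every geodesic triangle is $\max\{\frac{k}{4},\varepsilon+m\}$-thin, whence $\delta(G)\le\max\{\frac{k}{4},\varepsilon+m\}$.

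The step demanding the most care is producing an honest cycle $C$ to which the $\varepsilon$-dense $(k,m)$-chordality applies: one has to choose $a$, $b$, and $\beta$ so that $C$ is a simple closed curve running through vertices of $G$, which entails handling a possible overlap of $[x_1x_2]$ and $[x_1x_3]$ near $x_1$, the possibility that $[x_2x_3]$ shares whole edges with $A$, and --- in order to remove the initial hypothesis --- triangles whose corners $x_i$ are interior points of edges. No loss arises from $p$ itself being an interior point of an edge, because the density in the definition is imposed at every point of $C$, not merely at its vertices; once $C$ is correctly set up, what remains is exactly the elementary metric bookkeeping carried out above.
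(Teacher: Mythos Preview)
Your argument is correct (with the cycle-construction caveats you yourself flag), but it is not the route the paper takes. The paper does not prove Theorem~\ref{Th: hyperbolic} here at all---it is quoted from \cite{MP}---but the very same implication is re-proved a few lines later as the forward direction of Theorem~\ref{Th: carac2}, and that proof is structurally different from yours. There the key device is Lemma~\ref{L: triangle cycle}: it suffices to check thinness only for geodesic triangles $T$ that \emph{are} cycles. Once $T$ is itself a cycle, the $\varepsilon$-densely $(k,m)$-chordal hypothesis applies directly to $T$; if $L(T)<k$ each side has length $<k/2$ and thinness with constant $k/4$ is immediate, while if $L(T)\ge k$ one picks a shortcut vertex $x_i$ with $d_T(p,x_i)<\varepsilon$ and notes that, since the side through $p$ is geodesic, the associated $m$-shortcut must land on one of the other two sides.

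Your approach instead manufactures an auxiliary cycle $C=\alpha\cup\beta$ inside a general triangle, with $\alpha$ a geodesic subarc of $[x_2x_3]$ around $p$ and $\beta$ a simple path through $A$, and then runs the same shortcut bookkeeping on $C$. This is sound, and your observation $L(\alpha)\le\tfrac12 L(C)$ is exactly what forces a shortcut with one end on $\alpha$ to have its other end on $\beta\subset A$. The trade-off is clear: the paper's route is shorter and sidesteps every technicality you list in your last paragraph (overlap of sides near $x_1$, shared edges, non-vertex corners), at the price of invoking the external Lemma~\ref{L: triangle cycle}; your route is self-contained but must pay for the honest construction of $C$. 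Both yield the same bound $\delta(G)\le\max\{k/4,\varepsilon+m\}$.
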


Therefore, from theorems  \ref{T: separator},  \ref{T: separator 2} and \ref{Th: hyperbolic} we obtain:

\begin{corollary}\label{Cor: hyperbolic} Let $G$ be a uniform graph. If every minimal vertex separator in $G$ has diameter at most $m$, then $G$ is hyperbolic. Moreover, $\delta(G)\leq 3m-\frac{1}{2}$.
\end{corollary}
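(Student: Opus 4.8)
The plan is to combine the two density theorems with the hyperbolicity bound from \cite{MP}. Since the hypothesis is that $G$ is a uniform graph with every minimal vertex separator of diameter at most $m$, Theorem \ref{T: separator} applies directly (we do not assume countability), so for any $\epsilon > \frac12$ the graph $G$ is $(m+\epsilon)$-densely $(4m, 2m-1)$-chordal. Then I would feed this into Theorem \ref{Th: hyperbolic} with the parameter identifications $k = 4m$, $m_{\text{chord}} = 2m-1$, and $\varepsilon = m+\epsilon$, which yields that $G$ is hyperbolic with $\delta(G) \le \max\{\tfrac{k}{4}, \varepsilon + m_{\text{chord}}\} = \max\{m,\ (m+\epsilon)+(2m-1)\} = \max\{m,\ 3m-1+\epsilon\}$.

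The second step is to simplify this bound. For $m \ge 1$ (the only interesting case, since $m=0$ forces a tree-like situation and the statement is trivial), we have $3m-1+\epsilon \ge m$, so the max is $3m-1+\epsilon$. Now I take the infimum over admissible $\epsilon > \frac12$: since the conclusion "$G$ is $\delta$-hyperbolic" with $\delta = 3m-1+\epsilon$ holds for every $\epsilon > \frac12$, and the class of $\delta$-hyperbolic spaces is closed in the sense that $\delta(G) = \inf\{\delta : \text{every triangle is } \delta\text{-thin}\}$, we conclude $\delta(G) \le 3m - 1 + \tfrac12 = 3m - \tfrac12$. This gives exactly the claimed bound $\delta(G) \le 3m - \frac12$.

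I expect the only genuine subtlety — and it is minor — to be the bookkeeping with the parameter $\epsilon$ and the passage to the infimum: one must be slightly careful that Theorem \ref{Th: hyperbolic} is being applied with a fixed $\epsilon$ and that the final bound is obtained as a limit of valid bounds rather than by plugging in a forbidden value $\epsilon = \frac12$ directly. A secondary point worth a sentence is why we use Theorem \ref{T: separator} rather than the sharper Theorem \ref{T: separator 2}: the latter requires $G$ countable, which is not assumed here; if one did assume countability, Theorem \ref{T: separator 2} would give $(2m+2, m)$-chordality and hence $\delta(G) \le \max\{\tfrac{m+1}{2},\, (m+\epsilon)+m\} = 2m+\epsilon$, improving the bound to $2m + \frac12$, but that is a different (stronger) statement and not what is claimed in this corollary. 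So in the write-up I would simply note that the result follows from Theorems \ref{T: separator} and \ref{Th: hyperbolic}, record the arithmetic, and take $\epsilon \to \frac12^+$.

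\begin{proof}
We may assume $m\geq 1$, since if $m=0$ every minimal vertex separator is a single vertex and the conclusion is immediate. Fix any $\epsilon>\frac12$. By Theorem \ref{T: separator}, $G$ is $(m+\epsilon)$-densely $(4m,2m-1)$-chordal. Applying Theorem \ref{Th: hyperbolic} with $k=4m$, chordality parameter $2m-1$ and density parameter $\varepsilon=m+\epsilon$, we obtain that $G$ is hyperbolic with
\[
\delta(G)\leq \max\Big\{\tfrac{4m}{4},\ (m+\epsilon)+(2m-1)\Big\}=\max\{m,\ 3m-1+\epsilon\}.
\]
Since $m\geq 1$ and $\epsilon>\frac12$, we have $3m-1+\epsilon\geq 3m-\tfrac12\geq m$, so $\delta(G)\leq 3m-1+\epsilon$. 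As this holds for every $\epsilon>\frac12$, letting $\epsilon\to\frac12^{+}$ gives $\delta(G)\leq 3m-\frac12$.
\end{proof}
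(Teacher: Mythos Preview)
Your proposal is correct and follows exactly the route the paper intends: the corollary is stated as an immediate consequence of Theorems \ref{T: separator} and \ref{Th: hyperbolic}, and you have carried out the arithmetic (including the passage to the infimum over $\epsilon>\tfrac12$) cleanly. Your side remark distinguishing this from the countable case handled by Theorem \ref{T: separator 2} is also apt, since that is precisely what yields the sharper bound in Corollary \ref{Cor: hyperbolic 2}.
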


\begin{corollary}\label{Cor: hyperbolic 2} Let $G$ be a uniform countable graph. If every minimal vertex separator in $G$ has diameter at most $m$, then $G$ is hyperbolic. Moreover, $\delta(G)\leq 2m +\frac{1}{2}$.
\end{corollary}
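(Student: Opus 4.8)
The plan is to simply combine Theorem~\ref{T: separator 2} with Theorem~\ref{Th: hyperbolic}, exactly as the corollary's placement after those two results suggests. First I would invoke the countable analogue: since $G$ is a uniform countable graph in which every minimal vertex separator has diameter at most $m$, Theorem~\ref{T: separator 2} gives that $G$ is $(m+\epsilon)$-densely $(2m+2,m)$-chordal for any $\epsilon>\tfrac12$. This is the sharper of the two chordality conclusions available, and it is what yields the better hyperbolicity bound in the countable setting.

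Next I would feed these parameters into Theorem~\ref{Th: hyperbolic}: being $\varepsilon'$-densely $(k,m')$-chordal with $\varepsilon'=m+\epsilon$, $k=2m+2$, and $m'=m$ implies $G$ is hyperbolic with $\delta(G)\le \max\{\tfrac{k}{4},\varepsilon'+m'\}=\max\{\tfrac{2m+2}{4},\,(m+\epsilon)+m\}=\max\{\tfrac{m+1}{2},\,2m+\epsilon\}$. For $m\ge 1$ the second term dominates (and for the degenerate case $m=0$ one should note separators of diameter $0$ force the conclusion directly, or simply observe $2m+\epsilon\ge\tfrac{m+1}{2}$ whenever $m\ge\tfrac13$, which covers all integer $m\ge1$). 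Hence $\delta(G)\le 2m+\epsilon$ for every $\epsilon>\tfrac12$.

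Finally I would take the infimum over admissible $\epsilon$: letting $\epsilon\downarrow\tfrac12$ gives $\delta(G)\le 2m+\tfrac12$, which is the stated bound. The only mild subtlety is that the hypothesis of Theorem~\ref{T: separator 2} requires the bound $L(C)\ge 2m+2$ to be meaningful, i.e. one should tacitly assume $m\ge1$ (consistent with Remark~\ref{k 4}), and that $\varepsilon$ in Theorem~\ref{Th: hyperbolic} is allowed to be any positive constant so that the value $m+\epsilon>m+\tfrac12$ is legitimate. There is essentially no obstacle here: the corollary is a bookkeeping consequence of the two cited theorems, and the whole proof is the two-line chain
\[
\text{uniform countable, }\operatorname{diam}(S)\le m \;\Rightarrow\; (m+\epsilon)\text{-densely }(2m+2,m)\text{-chordal} \;\Rightarrow\; \delta(G)\le 2m+\tfrac12,
\]
with the passage to the infimum in $\epsilon$ being the only step requiring a word of justification.
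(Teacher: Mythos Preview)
Your proposal is correct and follows exactly the paper's approach: the paper simply states that the corollary follows from Theorems~\ref{T: separator 2} and~\ref{Th: hyperbolic}, and you have carried out precisely that two-step deduction, including the infimum over $\epsilon>\tfrac12$ and the remark that $m\ge 1$ is implicit via Remark~\ref{k 4}.
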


\section{Bottleneck Property}\label{S:3}

Let us recall the following definition from \cite{Man}:

\begin{definition} A geodesic metric space $(X,d)$ satisfies \emph{Bottleneck Property} (BP) if there exists some constant $\Delta>0$ so that given any two distinct points $ x, y\in X$ and a midpoint $z$ such that $d(x,z)=d(z,y)=\frac12 d(x,y)$, then every $xy$-path intersects $N_\D(z)$.
\end{definition}

\begin{remark} This definition although is not exactly the same, it is equivalent to Manning's. In the original definition J. Manning asked only for the existence of such a midpoint for any pair of points $x,y$. However, by Theorem \ref{Th: quasi-tree} below, (BP) implies that the space is quasi-isometric to a tree and therefore $\delta$-hyperbolic. Hence, it is an easy exercise in hyperbolic spaces to prove that if there is always a midpoint $z$ such such that every $xy$-path intersects 
$N_{\D}(z)$, then this condition holds in general for any midpoint, possibly with a different constant depending only on $\D$ and $\d$. See, for example, Chapter 2, Proposition 25 in \cite{G-H}. 
\end{remark}

\begin{definition} A graph $G$ satisfies (BP) on the vertices if there exists some constant $\Delta'>0$ so that given any two distinct vertices $ v, w\in V(G)$ and a midpoint $c$ such that $d(v,c)=d(c,w)=\frac12 d(v,w)$, then every $vw$-path intersects $N_{\Delta'}(c)$.
\end{definition}

\begin{proposition}\label{P: BP graphs} A 
graph $G$ satisfies (BP) if and only if it satisfies (BP) on the vertices. Moreover, if $G$  satisfies (BP) on the vertices with constant $\D'$, it satisfies (BP) with $\D=\Delta'+\frac32$. 
\end{proposition}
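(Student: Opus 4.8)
The plan is to prove the two implications of the equivalence separately, with the quantitative statement $\Delta = \Delta' + \tfrac32$ coming out of the direction "(BP) on vertices $\Rightarrow$ (BP)". The reverse direction is essentially trivial: if $G$ satisfies (BP) with constant $\Delta$, then in particular it satisfies the condition for pairs of vertices $v,w$, so (BP) on the vertices holds with the same constant $\Delta'=\Delta$. (One only needs to note that a midpoint $c$ of two vertices with $d(v,w)$ even is again a vertex, and if $d(v,w)$ is odd the midpoint is an interior point of an edge; either way the hypothesis of (BP) applies directly and gives the conclusion, so no work is needed here.) So the content is all in the forward direction.

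For "(BP) on the vertices with constant $\Delta'$ $\Rightarrow$ (BP) with constant $\Delta'+\tfrac32$", I would take two distinct points $x,y\in G$ and a midpoint $z$ with $d(x,z)=d(z,y)=\tfrac12 d(x,y)$, and let $p$ be an arbitrary $xy$-path; the goal is to show $p$ meets $N_{\Delta'+3/2}(z)$. The idea is to replace $x$, $y$, and $z$ by nearby vertices and invoke the vertex version. First choose a vertex $v$ on a geodesic $[xz]$ with $d(x,v)<1$ (e.g.\ $v$ an endpoint of the edge containing $x$, chosen on the $z$-side), and similarly a vertex $w$ on $[zy]$ with $d(y,w)<1$; then extend $p$ at its two ends by the (short, length $<1$) subpaths of these geodesics from $v$ to $x$ and from $y$ to $w$, obtaining a $vw$-path $p'$. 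Next I need a vertex $c$ that is a midpoint of $v$ and $w$ and is close to $z$. Here I would argue that $z$ lies within distance $\tfrac12$ of some vertex $c_0$ on a geodesic $[vw]$ (since $z$ is within $1$ of both $v$ and $w$ in the appropriate direction, a geodesic $[vw]$ passes within $\tfrac12$ of $z$), and then adjust $c_0$ along $[vw]$ by at most... — this is the delicate bookkeeping step — to land on an exact midpoint $c$ of $v,w$ with $d(z,c)\le \tfrac32$. Applying (BP) on the vertices to $v,w,c$ gives that $p'$ meets $N_{\Delta'}(c)$; since the added end-pieces of $p'$ have length $<1$ and lie within $1$ of $x$ and $y$ respectively (hence within $\tfrac12 d(x,y)+1$ of $z$, which we may assume is $\ge\Delta'$, else the conclusion is trivial), the intersection point can be taken on $p$ itself, and it lies within $\Delta' + d(z,c) \le \Delta'+\tfrac32$ of $z$. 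That yields (BP) with $\Delta=\Delta'+\tfrac32$.

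The main obstacle is the parity/midpoint juggling in the middle step: $d(v,w)$ need not be even, so an exact vertex midpoint $c$ of $v$ and $w$ may fail to exist, and one must either perturb the choice of $v$ and $w$ (shifting one of them by one edge along $[xz]$ or $[zy]$ to fix the parity of $d(v,w)$) or observe that when no vertex midpoint exists the midpoint is the center of an edge and argue with an endpoint of that edge instead, absorbing the extra $\tfrac12$ into the constant. Keeping careful track of these $\pm\tfrac12$ and $\pm1$ terms so that the total overhead is exactly $\tfrac32$ is the only real work; everything else is a routine application of the vertex hypothesis together with the triangle inequality. I would also remark that one may freely assume $d(x,y)$ is large (say $d(x,y) > 2\Delta'+3$), since for bounded $d(x,y)$ every $xy$-path trivially lies in a bounded neighborhood of $z$, which simplifies the handling of the appended end-segments.
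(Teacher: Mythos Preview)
Your overall strategy---replace $x,y$ by nearby vertices $v,w$, extend the given path to a $vw$-path, and invoke the vertex hypothesis---is exactly the paper's approach. The gap is that you have misread the definition of ``(BP) on the vertices'': only the endpoints $v,w$ are required to be vertices, while the midpoint $c$ is an arbitrary point with $d(v,c)=d(c,w)=\tfrac12 d(v,w)$. All of your ``parity/midpoint juggling'' is therefore unnecessary, and it is precisely this detour that costs you the sharp constant.

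The paper simply takes $v$ and $w$ to be the first and last vertices on the given geodesic $[xy]$ (so $v=x$ if $x\in V(G)$, else $v=v_1$; similarly for $w$). Then $[vw]\subset[xy]$ is itself a geodesic, and since each endpoint has moved by less than $1$, its midpoint $c$ satisfies $d(c,z)\le\tfrac12$ directly---no parity issue arises because $c$ need not be a vertex. The modification of an $xy$-path $\gamma$ into a $vw$-path $\gamma'$ changes things by at most $1$ at each end, so a point of $\gamma'\cap N_{\Delta'}(c)$ is either already on $\gamma$ or within $1$ of $x$ or $y$ (which lie on $\gamma$). Either way $\gamma$ meets $N_{\Delta'+1+1/2}(z)=N_{\Delta'+3/2}(z)$.

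As written, your bookkeeping does not close: with only $d(z,c)\le\tfrac32$ and end-pieces of length $<1$, if the intersection point of $\gamma'$ with $N_{\Delta'}(c)$ lands on an added end-piece you obtain a point of $\gamma$ within $\Delta'+\tfrac52$ of $z$, not $\Delta'+\tfrac32$; and your assumption $d(x,y)>2\Delta'+3$ gives only $d(x,z)>\Delta'+\tfrac32$, which does not exclude this case. Dropping the spurious vertex requirement on $c$ fixes everything at once and recovers the paper's argument.
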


\begin{proof} The only if condition is trivial. Let us see that it suffices to check the property on the pairs of vertices. 

Consider any pair of points $x,y\in G$ and let $z$ be a midpoint of a geodesic $[xy]$. 

If $d(x,y)\leq 2$, then (BP) is trivial with $\Delta=1$.

Suppose $d(x,y)> 2$. Then, the geodesic $[xy]$ is a path $xv_1\cup v_1v_2\cup \cdots \cup v_ky$ with $v_1,\dots ,v_k\in V(G)$ and $k\geq 2$. Let $v=x$ if $x$ is a vertex and $v=v_1$ otherwise, and let $w=y$ if $y$ is a vertex and $w=v_k$ otherwise. Then, there is a geodesic $[vw]\subset [xy]$ 
(possibly equal), and its midpoint, 
$c$, satisfies that $d(c,z)\leq \frac12$.

Consider any $xy$-path $\g$ and let us define a $vw$-path $\g'$ as follows: 
First, if  $v\in \g$, let $\g_0:=\g\setminus [xv)$ and if  $v\notin \g$, let  $\g_0:=[vx]\cup \g$.  
Then, if $y\neq w$ and $w\in \g$, let $\g':=\g_0\setminus [yw)$ and if $y\neq w$ and $w\notin \g$, let $\g':=[wy]\cup \g_0$.  

By hypothesis,  $\g'$ passes through $N_{\Delta'}(c)$. Since $d(a,v),d(b,w)\leq 1$ and $d(c,z)\leq \frac12$ it is immediate to check that $\g$ passes through $N_{\Delta'+\frac32}(z)$.
\end{proof}

A map between metric spaces, $f:(X,d_X)\to (Y,d_Y)$, is
said to be a \emph{quasi-isometric embedding} if there are constants $\lambda
\geq 1$ and $C>0$ such that $\forall x,x'\in X$,
$$\frac{1}{\lambda}d_X(x,x') -C \leq d_Y(f(x),f(x'))\leq \lambda d_X(x,x')+C.$$ 
If if there is a constant $D>0$ such that
$\forall y\in Y$, $d(y,f(X))\leq D$, then $f$ is a
\emph{quasi-isometry} and $X,Y$ are \emph{quasi-isometric}.

\begin{theorem}\cite[Theorem 4.6]{Man}\label{Th: quasi-tree} A geodesic metric space $(X,d)$ is quasi-isometric to a tree if and only if it satisfies (BP).
\end{theorem}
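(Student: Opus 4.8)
The plan is to prove the two implications separately. For ``quasi-isometric to a tree $\Rightarrow$ (BP)'', note first that a tree satisfies (BP) with constant $0$: if $z$ is the midpoint of the arc $[xy]$, then $z$ is a cut point separating $x$ from $y$, so every $xy$-path contains $z$. To carry this across a $(\lambda,C)$-quasi-isometry $f\colon X\to T$ onto a tree $T$, take $x,y\in X$, a geodesic $[xy]$ with midpoint $z$, and an $xy$-path $\gamma$; choose $x=p_0,\dots,p_n=y$ on $\gamma$ with $d(p_i,p_{i+1})\le 1$ and concatenate the arcs $[f(p_i)f(p_{i+1})]_T$ into a path $\widetilde\gamma$ in $T$ from $f(x)$ to $f(y)$, which therefore contains the whole arc $[f(x)f(y)]_T$. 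Since $f([xy])$ is a $(\lambda,C)$-quasi-geodesic in the $0$-hyperbolic space $T$, stability of quasi-geodesics places $f(z)$ within a distance $H=H(\lambda,C)$ of $[f(x)f(y)]_T\subseteq\widetilde\gamma$; tracing $\widetilde\gamma$ back to a sample point of $\gamma$ and applying the lower quasi-isometry inequality of $f$ shows $\gamma$ meets $N_\Delta(z)$ for a $\Delta$ depending only on $\lambda,C$.

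For the converse, assume $X$ satisfies (BP) with constant $\Delta$. I would first record two cheap consequences. (i) \emph{(BP) holds at every point of a geodesic, not just its midpoint}: given $z\in[xy]$, assume by symmetry $d(x,z)\le d(z,y)$ and let $y'\in[xy]$ satisfy $d(x,y')=2d(x,z)$, so that $z$ is the midpoint of $[xy']$; then any $xy$-path $\gamma$ followed by the sub-geodesic of $[xy]$ run from $y$ back to $y'$ is an $xy'$-path, hence meets $N_\Delta(z)$, and since that appended sub-geodesic stays at distance $\ge d(x,z)$ from $z$, the intersection point lies on $\gamma$ unless $d(x,z)\le\Delta$, in which case $\gamma$ already reaches $N_\Delta(z)$ at an endpoint. (ii) \emph{$X$ is $\Delta$-hyperbolic}: for a geodesic triangle with vertices $x,y,w$ and any $a\in[xy]$, the path $[xw]\cup[wy]$ meets $N_\Delta(a)$ by (i), so each side lies in the $\Delta$-neighbourhood of the other two. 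Then I pass to a graph: fix a maximal $1$-separated set $N\subset X$ and let $\Gamma$ be the graph on $N$ with edges between points at distance $\le 3$. This $\Gamma$ is connected, quasi-isometric to $X$, hyperbolic with constant controlled by $\Delta$, and — since coarse midpoints in the hyperbolic space $X$ are coarsely unique — it satisfies (BP) with a constant depending only on $\Delta$; so it suffices to show $\Gamma$ is quasi-isometric to a tree.

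The construction is a tree of coarse shells around a basepoint $p\in N$. For $n\in\NN$ set $A_n=\{v\in N:\ |d(v,p)-n|\le K\}$ and let $H_n$ be the graph on $A_n$ with edges between vertices within distance $D$, for fixed constants $K,D$. The \emph{key lemma} is that (BP) forces a uniform bound $\diam H_n\le M=M(\Delta)$ on the connected components of every $H_n$: if $u,v$ lie in one component of $H_n$ they are joined by a path staying uniformly close to the sphere $S_n(p)$, so by (i) that path meets $N_\Delta(a)$ for $a$ the midpoint of a geodesic $[uv]$, whence $d(p,a)\approx n$; but by (ii) the midpoint of $[uv]$ satisfies $d(p,a)=(u|v)_p+O(1)\approx n-\tfrac12 d(u,v)$, and comparing the two estimates bounds $d(u,v)$. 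Now form the graph $T$ whose vertices are the components of the $H_n$ ($n\in\NN$), joining a level-$n$ component to a level-$(n{+}1)$ component whenever they contain points within distance $D'$. The assignment $\phi\colon v\mapsto(\text{the }H_{d(v,p)}\text{-component containing }v)$ extends to a map $\Gamma\to T$. That $\phi$ does not expand distances too much, $d_T(\phi u,\phi v)\le\lambda d_\Gamma(u,v)+C$, follows from hyperbolicity: $\Gamma$-geodesics $[pu]$ and $[pv]$ fellow-travel up to level $m\approx(u|v)_p$, so their level-$m$ points lie in one $H_m$-component, which the obvious descending chains join to $\phi u$ and to $\phi v$ in $d_\Gamma(p,u)-m$ and $d_\Gamma(p,v)-m$ steps, giving $d_T(\phi u,\phi v)\le(d_\Gamma(p,u)-m)+(d_\Gamma(p,v)-m)=d_\Gamma(u,v)+O(1)$. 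That $\phi$ does not contract distances, $d_\Gamma(u,v)\le\lambda d_T(\phi u,\phi v)+C$, is exactly where the key lemma is needed — each shell-component on a $T$-geodesic from $\phi u$ to $\phi v$ has diameter $\le M$ and consecutive ones meet within $D'$, so $d_\Gamma(u,v)\le d_T(\phi u,\phi v)\cdot(M+D')+O(1)$. Likewise the key lemma forces every embedded cycle of $T$ to have uniformly bounded length, hence $T$ is quasi-isometric to its block-cut-point tree (an honest tree, with blocks of bounded diameter); composing the quasi-isometries $X\sim\Gamma\sim T\sim(\text{tree})$ completes the proof.

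The step I expect to be the main obstacle is the key lemma, the uniform bound on the diameters of the shell-components. This is precisely where hyperbolicity alone is insufficient: in $\HH^2$ the metric spheres about a point stay connected, the construction degenerates to a ray, and the resulting map is not a quasi-isometry. What rescues the argument is that (BP) controls \emph{all} $xy$-paths, not just geodesics, which is exactly what breaks each sphere into uniformly bounded pieces and thereby produces the tree. Everything else — the reduction to $\Gamma$, hyperbolicity via (i)--(ii), the verification that $\phi$ is a quasi-isometry, and the passage to the block-cut-point tree — is routine once the key lemma is in hand.
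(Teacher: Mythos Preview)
The paper does not prove this theorem. It is quoted verbatim as \cite[Theorem 4.6]{Man} and used as a black box; no argument for it appears anywhere in the text. So there is no ``paper's own proof'' to compare your proposal against.

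For what it is worth, your sketch is in the spirit of Manning's original argument: the forward direction is the routine observation that (BP) is a quasi-isometry invariant and trees satisfy it with constant $0$; the converse builds a tree-like object from coarse level sets around a basepoint, with the crucial input being that (BP) --- unlike mere hyperbolicity --- bounds the diameter of the pieces into which each sphere breaks. Your identification of this ``key lemma'' as the heart of the matter is correct, and your contrast with $\HH^2$ is exactly the right diagnostic. A couple of points would need tightening in a full write-up: in step~(i) the concatenation $\gamma\cup[yy']$ need not be a simple path, so you should pass to the sub-path up to the first hit of $y'$ before invoking (BP); and the claim $d(p,a)\approx(u|v)_p$ for the midpoint $a$ of $[uv]$ uses $d(p,u)\approx d(p,v)$, which you have but should make explicit. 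None of this is relevant to the present paper, however, since the author simply imports the result.
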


\begin{theorem}\label{Th: ch-BP} A graph $G$ satisfies (BP) if and only if it is $\varepsilon$-densely $(k,m)$-chordal. 
\end{theorem}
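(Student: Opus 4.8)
The plan is to prove the two implications separately. For the forward direction, assume $G$ satisfies (BP) with constant $\Delta$; by Proposition \ref{P: BP graphs} we may assume it satisfies (BP) on the vertices with some constant $\Delta'$, and by Theorem \ref{Th: quasi-tree} the space is quasi-isometric to a tree, hence $\delta$-hyperbolic. Take any cycle $C$ with $L(C)\geq k$, where $k$ is to be chosen (large, depending on $\Delta'$). For each vertex $v\in C$, I would look at the two vertices $a,b\in C$ that are at $C$-distance roughly $L(C)/2$ but more carefully chosen so that $v$ is a midpoint of a geodesic $[ab]$ in $G$ — here hyperbolicity is what lets one find such $a,b$ near $v$: since $C$ is a cycle, one of the two arcs of $C$ between suitable $a$ and $b$ contains $v$ and has length close to $d(a,b)$ only up to the "thinness" defect, so one must argue that some vertex $v'$ on $C$ with $d_C(v,v')$ bounded is a genuine midpoint of a geodesic joining two points of $C$. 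Then (BP) on the vertices says the other arc $\gamma_1$ of $C$ (which is a $vw$-type path avoiding $v$'s neighborhood along $C$) must enter $N_{\Delta'}(v')$; that gives a vertex $w\in\gamma_1$ with $d(v',w)\leq \Delta'$ while $d_C(v',w)$ is large, i.e. a shortcut of bounded length. By Remark \ref{R:strict} this produces a strict shortcut with shortcut vertices within bounded $C$-distance of $v'$, hence within bounded distance of $v$. Ranging $v$ over all of $C$, the shortcut vertices obtained form an $\varepsilon$-dense subset of $C$ for $\varepsilon$ depending only on $\Delta'$, and the shortcut lengths are $\leq m$ with $m$ depending only on $\Delta'$; this gives $\varepsilon$-densely $(k,m)$-chordal.

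For the converse, assume $G$ is $\varepsilon$-densely $(k,m)$-chordal. By Theorem \ref{Th: hyperbolic}, $G$ is $\delta$-hyperbolic with $\delta\le\max\{k/4,\varepsilon+m\}$. By Proposition \ref{P: BP graphs} it suffices to verify (BP) on the vertices, so take vertices $v,w$ with midpoint $c$ on a geodesic $[vw]$, and an arbitrary $vw$-path $\gamma$. If $d(v,w)$ is bounded (say $\le k$) there is nothing to prove since $\gamma$ trivially stays within a bounded neighborhood of $c$ — actually one must be mildly careful, but boundedness of $d(v,w)$ forces $\gamma$ to pass within $d(v,w)$ of $c$ after possibly replacing $\gamma$ by a sub-path; more robustly, use hyperbolicity directly. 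When $d(v,w)$ is large, the union $[vw]\cup\gamma$ contains a cycle (or a concatenation of cycles) of length $\ge k$; apply $\varepsilon$-dense $(k,m)$-chordality to repeatedly shorten $\gamma$ while controlling how far it strays, and combine with $\delta$-thinness of the triangle formed by $[vw]$ and two halves of a geodesic to show $\gamma$ cannot avoid $N_\Delta(c)$. The cleanest route here is probably: $\varepsilon$-densely $(k,m)$-chordal plus hyperbolic implies that any path can be replaced, keeping endpoints, by a path that is an $(\varepsilon+m)$-rough-geodesic staying in a bounded neighborhood of the original, and rough geodesics in a hyperbolic space satisfy a bottleneck-type estimate; alternatively invoke that $\varepsilon$-densely $(k,m)$-chordal graphs have "thin cycles" in a suitable sense.

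The main obstacle I expect is the converse direction — specifically, turning the local shortcut information ($\varepsilon$-dense bounded shortcuts on cycles) into the global statement that an \emph{arbitrary} $vw$-path, no matter how long and wild, must come close to the midpoint $c$. The difficulty is that a single cycle $[vw]\cup\gamma$ may be shortened in many places but the shortcuts are on $C$, not necessarily bringing $\gamma$ itself close to $[vw]$; one needs an induction on $L(\gamma)$ (or on $d(v,w)$) where at each stage a shortcut replaces part of $\gamma$ by something of length $\le m$, strictly decreasing total length, and one tracks that the replaced portion was within $\varepsilon$ of where the new portion sits, so the Hausdorff distance between the original $\gamma$ and the final short path is controlled by (number of steps)$\times(\varepsilon+m)$ — which is not obviously bounded. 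The resolution is to shorten only down to a path of length comparable to $d(v,w)$ (stopping once no cycle of length $\ge k$ survives, i.e. the path together with $[vw]$ is "$k$-thin"), argue the number of effective shortening steps near any given point of $\gamma$ is bounded using $\varepsilon$-density and the fact that overlapping shortcuts can be organized, and then use that a path which forms only short cycles with a geodesic, in a hyperbolic space, stays uniformly close to that geodesic. I would present the forward direction in full and, for the converse, either cite the analogous argument from \cite{MP} or reduce it to the stability/rough-geodesic machinery, flagging that the constants are not sharp.
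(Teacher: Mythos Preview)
Your forward direction is workable but takes a detour through hyperbolicity that the paper avoids entirely. The paper's argument is direct: given a vertex $x$ on a cycle $C$ with $L(C)\ge 2\Delta+4$, choose $a,b\in C$ with $d_C(a,x)=d_C(x,b)=\Delta+1$ and let $\gamma_1$ be the arc through $x$. Either $\gamma_1$ is \emph{not} a geodesic, in which case it already contains a shortcut of length $\le 2\Delta+1$ with a shortcut vertex near $x$; or $\gamma_1$ \emph{is} a geodesic, in which case $x$ is automatically its midpoint and (BP) applied to the other arc $\gamma_2$ gives a vertex of $\gamma_2$ within $\Delta$ of $x$, hence a shortcut. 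No appeal to Theorem~\ref{Th: quasi-tree} or to hyperbolicity is needed; the case split on whether the arc is geodesic replaces your search for a ``genuine midpoint'' near $v$.

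The converse is where you have a genuine gap. You correctly sense a difficulty, but the difficulty is illusory and your proposed resolutions (iterated shortening, induction on $L(\gamma)$, rough-geodesic machinery) are all unnecessary. The key point you are missing is that $\varepsilon$-density is measured in the \emph{cycle metric} $d_C$, and this lets you locate a shortcut vertex exactly where you need it in one step. Concretely: set $\Delta=\max\{k/4,\varepsilon+m\}$ and suppose some $ab$-path $\gamma$ avoids $N_\Delta(c)$. Passing to the sub-cycle $C=[a'b']\cup\gamma'$ with $c\in[a'b']\subset[ab]$, one has $L(C)>k$ (since $[a'b']$ alone has length $>2\Delta\ge k/2$). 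Now $\varepsilon$-density gives a shortcut vertex $w$ with $d_C(c,w)<\varepsilon\le\Delta$; because $[a'b']$ extends more than $\Delta$ on each side of $c$, this forces $w\in[a'b']$. But $[a'b']$ is geodesic, so the strict $m$-shortcut at $w$ cannot land back on $[a'b']$ and must hit some $z\in\gamma'\subset\gamma$. Then $d(c,z)<\varepsilon+m\le\Delta$, contradicting $\gamma\cap N_\Delta(c)=\emptyset$. That is the entire argument: one shortcut, no iteration, no hyperbolicity.
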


\begin{proof} Suppose that $G$ satisfies (BP) with parameter $\D$ and consider any cycle $C$ with  
$L(C)\geq 2\D+4$. Consider any vertex $x\in C$ and the two vertices $a,b$ such that $d_C(a,x)=d_C(x,b)=\D+1$. Thus, $C$ defines two $ab$-paths, $\g_1,\g_2$. Let us assume that $x\in \g_1$. If $\g_1$ is not geodesic, then there is a shortcut with length at most $2\D+1$ and a shortcut vertex in $N_{\D+1}(x)$. Otherwise, since $G$ satisfies (BP) with parameter $\D$, there is a vertex $y$ in $\g_2$ such that $d(x,y)\leq \D$.  Since $d_C(x,y)>\D$ and by Remark \ref{R:strict},  there is a shortcut vertex $z$ such that $d_C(x,z)<\D$. Therefore, $G$ is $(\D+1+\varepsilon)$-densely $(4\D+4,2\D+1)$-chordal for any $\varepsilon> \frac12$. 

Suppose that $G$ is $\varepsilon$-densely $(k,m)$-chordal and it does not satisfy (BP) with parameter $\D=\max\{\frac{k}{4},\varepsilon+m\}$. Then, there are two points, $a,b$, a geodesic $[ab]$ with midpoint $c$ and a path $\g$ such that $\g\cap N_\D(c)=\emptyset$. Then, it is immediate to check that there exist two points $a',b'\in \g\cap [ab]$ such that the restriction of $[ab]$, $[a'b']$, and the restriction of $\g$, $\g'$, joining $a'$ to $b'$ define a cycle $C$ with $L(C)> k$. Since $G$ is $\varepsilon$-densely $(k,m)$-chordal, there is a strict shortcut $\s$ with $L(\s)\leq m$ with an associated shortcut vertex $w$ such that $d_C(c,w)<\varepsilon<\D$. Therefore, $w\in [a'b']$ and, since $[a'b']$ is geodesic, the shortcut must join $w$ to a vertex $z$ in $\g'\subset \g$. Hence, $d(z,c)< \varepsilon +m$ and $\g\cap N_\D(c)\neq \emptyset$ leading to contradiction.
\end{proof}

\begin{corollary}\label{C: ch-BP} A graph $G$ is quasi-isometric to a tree if and only if it is 
$\varepsilon$-densely $(k,m)$-chordal.
\end{corollary}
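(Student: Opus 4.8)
The statement to prove is Corollary \ref{C: ch-BP}: a graph $G$ is quasi-isometric to a tree if and only if it is $\varepsilon$-densely $(k,m)$-chordal. This is an immediate consequence of two results already established: Theorem \ref{Th: quasi-tree} (Manning), which says a geodesic metric space is quasi-isometric to a tree if and only if it satisfies (BP); and Theorem \ref{Th: ch-BP}, which says a graph satisfies (BP) if and only if it is $\varepsilon$-densely $(k,m)$-chordal. So the proof is just a two-line chaining of equivalences, and there is no real obstacle — it is a pure corollary.

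\begin{proof} By Theorem \ref{Th: quasi-tree}, $G$ is quasi-isometric to a tree if and only if it satisfies (BP). By Theorem \ref{Th: ch-BP}, $G$ satisfies (BP) if and only if it is $\varepsilon$-densely $(k,m)$-chordal. Combining these two equivalences yields the claim. \end{proof}

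Since the work is already done in the cited theorems, I would present it exactly as above. The only thing worth double-checking is that the quantifier conventions on $\varepsilon,k,m$ match between the two theorems (in Theorem \ref{Th: ch-BP} the constants produced are explicit — e.g. $(\Delta+1+\varepsilon)$-dense $(4\Delta+4,2\Delta+1)$-chordal in one direction — and any such choice of constants is enough to invoke Manning's theorem, while conversely being $\varepsilon$-densely $(k,m)$-chordal for some triple is what feeds into (BP)); this compatibility is exactly what Theorem \ref{Th: ch-BP} asserts, so nothing further is needed. If one wanted to be fully self-contained one could also remark that a graph with our length-one edge metric is a geodesic metric space, so Theorem \ref{Th: quasi-tree} applies, but this was already noted in the preliminaries.
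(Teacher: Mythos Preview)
Your proof is correct and is exactly the approach the paper takes: the corollary is stated immediately after Theorem~\ref{Th: ch-BP} with no separate proof, since it follows at once by combining that theorem with Manning's Theorem~\ref{Th: quasi-tree}. Your additional remarks about the quantifier conventions and the graph being a geodesic metric space are valid sanity checks but, as you note, unnecessary for the formal argument.
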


\begin{definition} Given any family $\mathcal{F}$ of cycles, a metric graph $(G,d)$ is \emph{$\varepsilon$-densely $(k,m)$-chordal on $\mathcal{F}$} if for every $C\in \mathcal{F}$ with length $L(C)\geq k$, there exist strict shortcuts $\sigma_1,...,\sigma_r$ with $L(\s_i)\leq m$ $\forall \, i$ and such that their associated shortcut vertices define an $\varepsilon$-dense subset in $(C,d_C)$.
\end{definition}

Let us recall the following:

\begin{lemma} \cite[Lemma 2.1]{RT1} \label{L: triangle cycle} Let $X$ be a geodesic metric space. If every geodesic triangle in $X$ which is a cycle is $\d$-thin, then $X$ is $\d$-hyperbolic.
\end{lemma}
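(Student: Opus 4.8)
The plan is to reduce an arbitrary geodesic triangle to one that is a cycle by a surgery argument, and then invoke the hypothesis. Let $T=\{x_1,x_2,x_3\}$ be any geodesic triangle in $X$, fix a side, say $[x_1x_2]$, parametrized by a geodesic $\gamma\colon[0,d(x_1,x_2)]\to X$, and let $p=\gamma(t_0)$ be an arbitrary point on it. We must show that $p$ lies in the $\delta$-neighborhood of $[x_1x_3]\cup[x_2x_3]$. If $p$ already lies on $[x_1x_3]\cup[x_2x_3]$ there is nothing to prove, so assume it does not.

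First I would isolate the relevant subsegment of $[x_1x_2]$. The set $\{t:\gamma(t)\in[x_1x_3]\cup[x_2x_3]\}$ is closed and contains $0$ and $d(x_1,x_2)$ (because $x_1\in[x_1x_3]$ and $x_2\in[x_2x_3]$) but not $t_0$; let $t_1$ be its largest element $\le t_0$ and $t_2$ its smallest element $\ge t_0$, so that $t_1<t_0<t_2$ and $\gamma\big((t_1,t_2)\big)$ misses $[x_1x_3]\cup[x_2x_3]$. Put $a=\gamma(t_1)$ and $b=\gamma(t_2)$; each of $a,b$ lies on $[x_1x_3]$ or on $[x_2x_3]$.

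Next I would build, from $a$, $b$ and suitable subsegments of the other two sides, a new geodesic triangle (degenerating to a bigon in some cases) that is a cycle. The segment $\gamma([t_1,t_2])\subseteq[x_1x_2]$ is one side, and it contains $p$; for the other two sides one takes the subsegment of $[x_1x_3]$ from $a$ to a point $c$ and the subsegment of $[x_2x_3]$ from $b$ to $c$, where $c$ is chosen as the first common point of the two relevant subarcs travelling towards $x_3$ (so that they meet only at $c$); when $a$ and $b$ happen to lie on the same one of the two sides one instead takes the single subsegment of that side between them, obtaining a geodesic bigon. All new sides are geodesics, being subsegments of geodesics, and their lengths match the corresponding distances for the same reason. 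The key point is that this new triangle is a simple closed curve: its sides pairwise intersect only at vertices, which follows from the choice of $t_1,t_2$ (the interior of $\gamma([t_1,t_2])$ avoids $[x_1x_3]\cup[x_2x_3]$) together with the choice of $c$. Whenever the construction would produce a degenerate configuration that is not a cycle (two of the chosen subsegments coinciding, or $a=b$), one checks directly that $p$ must then already lie on $[x_1x_3]\cup[x_2x_3]$, contradicting our assumption. The hypothesis then applies to this cycle-triangle, which is $\delta$-thin; since $p$ lies on the side coming from $[x_1x_2]$, it lies in the $\delta$-neighborhood of the union of the other two sides, and those are contained in $[x_1x_3]\cup[x_2x_3]$. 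Hence every geodesic triangle $T$ is $\delta$-thin, i.e.\ $X$ is $\delta$-hyperbolic.

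The main obstacle is the bookkeeping in the third step: enumerating the cases according to which of the two remaining sides contain $a$ and $b$ (including the possibilities that one of them lies on both, or that $[x_1x_3]$ and $[x_2x_3]$ overlap near $x_3$), choosing the cut point $c$ so that the resulting configuration is genuinely a simple closed curve, and excluding the degenerate configurations by tracing back to the hypothesis that $p$ is not on the other two sides. None of this is deep, but it is where all the care is required; the remaining steps are immediate.
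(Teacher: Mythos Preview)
The paper does not give its own proof of this lemma: it is quoted verbatim from \cite[Lemma 2.1]{RT1} and then used as a black box in the proof of Theorem~\ref{Th: carac2}. So there is nothing in the present paper to compare your argument against.

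That said, your surgery approach is the standard one and is essentially correct. Given a point $p$ on one side not lying on the union of the other two, you correctly isolate the maximal open subarc around $p$ that avoids the other sides, and then build a sub-triangle (or sub-bigon) that is a simple closed curve and still contains $p$ on the side coming from $[x_1x_2]$. The hypothesis applied to that cycle gives the $\delta$-thinness at $p$, and the new ``other'' sides are subsegments of the original $[x_1x_3]\cup[x_2x_3]$, so the conclusion transfers back. Your choice of $c$ as the first meeting point of the two subarcs heading towards $x_3$ can be made precise by taking $s^*=\inf\{s:\alpha(s)\in\mathrm{im}\,\beta\}$ along the parametrization $\alpha$ of $[a x_3]\subset[x_1x_3]$; injectivity of the geodesics $\alpha,\beta$ then yields that the two subarcs meet only at $c=\alpha(s^*)$. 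The degenerate cases ($a=b$, or two subsegments coinciding) are indeed ruled out exactly as you say: $a\neq b$ because $\gamma$ is injective, and coincidence of sides would force $p$ onto the other two sides. The case enumeration you flag (which side contains $a$, which contains $b$, whether one of them lies on both) is tedious but routine, and you have identified the only place where real care is needed.
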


Let $\mathcal{T}$ be the family of cycles that are geodesic triangles. 
It is immediate to check that, using Lemma \ref{L: triangle cycle}, the proof of Theorem 13 in \cite{MP} can be trivially re-written (we include it for completeness) to obtain the following:

\begin{theorem}\label{Th: carac2} $G$ is $\delta$-hyperbolic if and only if $G$ is $\varepsilon$-densely $(k,m)$-chordal on $\mathcal{T}$.
\end{theorem}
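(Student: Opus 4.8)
The plan is to mirror the structure of Theorem~\ref{Th: ch-BP}, but replace the role of the Bottleneck Property by the thinness condition on geodesic triangles, using Lemma~\ref{L: triangle cycle} to pass between "all geodesic triangles that are cycles are thin" and "$\delta$-hyperbolic." Thus there are two implications to establish: (i) if $G$ is $\delta$-hyperbolic, then $G$ is $\varepsilon$-densely $(k,m)$-chordal on $\mathcal{T}$ for suitable constants; and (ii) if $G$ is $\varepsilon$-densely $(k,m)$-chordal on $\mathcal{T}$, then every geodesic triangle that is a cycle is $\delta$-thin for some $\delta$, whence by Lemma~\ref{L: triangle cycle} the graph is hyperbolic. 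Since the statement says "the proof of Theorem 13 in \cite{MP} can be trivially re-written," I would follow that proof essentially verbatim, the only change being that "cycle" is everywhere replaced by "cycle that is a geodesic triangle," and the final invocation of hyperbolicity is routed through Lemma~\ref{L: triangle cycle}.

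For implication (ii), take a geodesic triangle $T=\{x_1,x_2,x_3\}$ which is a cycle $C$. If $L(C)<k$ the triangle is automatically thin with constant $k/2$, so assume $L(C)\geq k$. Pick a point $p$ on one side, say $[x_1x_2]$, and let $v$ be a nearest vertex. By the $\varepsilon$-densely $(k,m)$-chordal hypothesis on $\mathcal{T}$, there is a strict shortcut $\sigma_i$ of length $\leq m$ with an associated shortcut vertex $v'$ satisfying $d_C(v,v')<\varepsilon$. The shortcut $\sigma_i$ joins $v'$ to a second shortcut vertex $w'$; because $[x_1x_2]$ is a geodesic, $w'$ cannot lie on $[x_1x_2]$ too far from $v'$ — more precisely one argues that $w'$ must lie on the union of the other two sides $[x_2x_3]\cup[x_3x_1]$ (or close to $x_1$ or $x_2$), so $d(p,w')\leq d(p,v)+d(v,v')+d(v',w')<\tfrac12+\varepsilon+m$. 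Hence every point of every side is within $\varepsilon+m+\tfrac12$ of the union of the other two sides, giving $\delta$-thinness of $T$ with $\delta=\max\{k/2,\varepsilon+m+\tfrac12\}$ (the $k/2$ term absorbing the short cycles), and Lemma~\ref{L: triangle cycle} upgrades this to $\delta$-hyperbolicity of $G$.

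For implication (i), assume $G$ is $\delta$-hyperbolic and let $C$ be a cycle that is a geodesic triangle, with $L(C)\geq k$ where $k$ is to be fixed (something like $4\delta+4$). Given a vertex $x\in C$ lying on one side, the $\delta$-thinness of the triangle gives a point $y$ on the union of the other two sides with $d(x,y)\leq\delta$; passing to nearby vertices and using that $d_C(x,y)$ is large compared to $d(x,y)$, Remark~\ref{R:strict} produces a genuine strict shortcut of controlled length $\leq m$ (on the order of $2\delta+1$) with a shortcut vertex within $\delta+1$ of $x$. Ranging $x$ over all vertices of $C$ yields the required $\varepsilon$-dense family of shortcut vertices with $\varepsilon=\delta+1+\eta$ for any $\eta>\tfrac12$, proving $G$ is $(\delta+1+\eta)$-densely $(4\delta+4,2\delta+1)$-chordal on $\mathcal{T}$.

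The main obstacle — and the only point requiring care beyond Theorem~\ref{Th: ch-BP}'s proof — is ensuring that restricting to the subfamily $\mathcal{T}$ does not break either direction. In direction (ii) this is where Lemma~\ref{L: triangle cycle} is essential: chordality information is being supplied only on geodesic-triangle cycles, so one genuinely cannot conclude thinness of arbitrary triangles directly, but the lemma says thinness on cycle-triangles suffices for hyperbolicity. In direction (i), one must check that the cycle produced from a $\delta$-thin triangle is itself literally a geodesic triangle — but it is, since we started with one — so no issue arises. The quantitative bookkeeping (the exact constants $k$, $m$, $\varepsilon$) is routine and follows the pattern already displayed in the proof of Theorem~\ref{Th: ch-BP}.
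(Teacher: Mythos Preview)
Your proposal follows the same two-direction structure as the paper, and direction (ii) is essentially the paper's argument (with a slightly inflated constant from the unnecessary detour through a nearest vertex; the paper works directly with the point $p$ and gets $\delta\leq\max\{k/4,\varepsilon+m\}$, using that a side of a triangle with perimeter $<k$ has length $<k/2$ so any point on it is within $k/4$ of a corner). The invocation of Lemma~\ref{L: triangle cycle} is exactly what the paper does.

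In direction (i), however, there is a real gap in your sketch. The claim that ``$d_C(x,y)$ is large compared to $d(x,y)$'' fails near the corners of the triangle: if $x\in[x_1x_2]$ is at distance $1$ from $x_1$, the $\delta$-thinness may well yield a point $y$ equal to $x_1$ itself (or just past it on $[x_1x_3]$), and then $d_C(x,y)$ need not exceed $d(x,y)$, so no shortcut arises. The paper's remedy is to restrict first to points $p$ with $d(p,\{x_1,x_2\})>\delta$; for those the path of length $\leq\delta$ to the other sides \emph{is} a genuine shortcut, giving a shortcut vertex within $\delta$ of $p$. Points near corners are then handled by proximity to these, which requires that not too many sides be short: taking $k=9\delta$ (not $4\delta+4$) ensures at most one side has length $\leq 2\delta$, and yields $\varepsilon=3\delta+1$, $m=\delta$. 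Your appeal to ``the pattern displayed in Theorem~\ref{Th: ch-BP}'' is misleading here: in that proof one manufactures the midpoint role for an \emph{arbitrary} vertex $x$ by choosing $a,b$ symmetrically around it on the cycle, so there are no distinguished ``corners'', whereas for a geodesic triangle the three corners are fixed by the structure and must be treated separately.
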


\begin{proof} Suppose that $G$ is $\varepsilon$-densely $(k,m)$-path-chordal on $\mathcal{T}$. Let us see that $\delta(G)\leq \max\{\frac{k}{4},\varepsilon+m\}$. Consider any cycle which is a  geodesic triangle $T=\{x,y,z\}$. If $L(T)<k$, it follows that every side of the triangle has length at most $\frac{k}{2}$. Therefore, the hyperbolic constant is at most $\frac{k}{4}$. Then, let $L(T)\geq k$ and let us prove that $T$ is $(\varepsilon+m)$-thin. Consider any point $p\in T$ and let us assume that $p\in [xy]$. If $d(p,x)< \varepsilon+m$ or $d(p,y)< \varepsilon+m$, we are done. Otherwise, there is a shortcut vertex $x_i$ such that $d(x_i,p)<\varepsilon$ and a shortcut $\sigma_i$, with $x_i\in \sigma_i$ and $L(\sigma_i)\leq m$. Since $[xy]$ is a geodesic, $\sigma_i$ does not connect two points in $[xy]$ and $d(p, [xz]\cup [yz])<\varepsilon+m$. Then, by Lemma 
\ref{L: triangle cycle},  $\delta(G)\leq \max\{\frac{k}{4},\varepsilon+m\}$.

Suppose that $G$ is $\delta$-hyperbolic and consider any cycle which is a geodesic triangle $T=\{x,y,z\}$ with $L(T)\geq 9\delta$. Let $p\in T$ and let us assume, with no loss of generality, that $p\in [xy]$. Since $G$ is $\delta$-hyperbolic, $d(p, [xz]\cup [yz])\leq \delta$. If $d(p,x),d(p,y)>\delta$, then there is a path $\gamma$ with $L(\gamma)\leq \delta$ joining $p$ to $[xz]\cup [yz]$. In particular, there is a shortcut $\sigma\subset \gamma$ with $L(\sigma)\leq L(\gamma)\leq \delta$ joining some shortcut vertex $p'\in [xy]$ with $d(p,p')< \delta$ to $[xz]\cup [yz]$. Therefore, if $L([xy])>2\delta$, for every point $q\in [xy]$ there is a shortcut vertex $q'\in [xy]$ such that 
$d_T(q,q')<2\d+1$ associated to a shortcut with length at most $\delta$. Since $L(T)\geq 9\delta$, by triangle inequality, there is at most one side of the triangle with length at most $2\delta$. Then, for every point $p$ in the triangle there is a shortcut vertex $p'$ such that $d_T(p,p')<3\d+1$ associated to a shortcut with length at most $\delta$. Thus, it suffices to consider $\varepsilon=3\delta+1$,  $k=9\delta$ and $m=\delta$.
\end{proof}

\begin{remark} Notice that in Corollary \ref{C: ch-BP} we obtain that a graph $G$ is quasi-isometric to a tree if and only if all the cycles satisfy certain property and Theorem \ref{Th: carac2}  states that the same property, restricted to the cycles which are geodesic triangles, characterizes being hyperbolic. 
\end{remark}

The following theorem can be also obtained as a corollary of theorems \ref{T: separator} and \ref{Th: ch-BP}. However, the direct proof provides a better bound for the parameter $\D$.

\begin{theorem}\label{Th: separator BP} Given a uniform graph $G$, if every minimal vertex separator has diameter at most $m$ then $G$ satisfies (BP) (i.e., $G$ is quasi-isometric to a tree). Moreover, it suffices to take $\D=m+2$.
\end{theorem}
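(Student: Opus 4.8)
The plan is to argue directly that the bottleneck property holds on the vertices (so that Proposition \ref{P: BP graphs} then upgrades it to (BP) for all points) with constant $\D'=m+\frac12$ essentially, giving $\D=m+2$ after the conversion. So I would start with two distinct vertices $v,w$, a geodesic $[vw]$ with midpoint $c$, and an arbitrary $vw$-path $\g$; the goal is to show $\g$ meets $N_{m+\frac12}(c)$, i.e.\ it contains a vertex within distance $m$ of $c$ (or is within $1$ of $c$ trivially). If $d(v,w)\le 2$ this is immediate, so assume $d(v,w)\ge 3$ and $c$ is a vertex. Apply Lemma \ref{L:separator} to the geodesic $[vw]$ and the vertex $c\in[vw]$, $c\neq v,w$: this yields a minimal $vw$-separator $S$ containing $c$. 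By hypothesis $\diam(S)\le m$. Since $\g$ is a $vw$-path and $S$ separates $v$ from $w$, $\g$ must pass through some vertex $u\in S$. Then $d(c,u)\le \diam(S)\le m$, so $\g$ meets $N_m(c)\subset N_{m+\frac12}(c)$.

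A subtlety: Lemma \ref{L:separator} requires $c$ to be an actual vertex, but the midpoint of a geodesic between two vertices at odd distance is an edge-midpoint, not a vertex. This is exactly why one should prove the property ``on the vertices'' first and then invoke Proposition \ref{P: BP graphs}: but in the ``on the vertices'' statement the midpoint $c$ with $d(v,c)=d(c,w)=\frac12 d(v,w)$ only exists when $d(v,w)$ is even, and one handles the odd case by taking the two vertices nearest the true midpoint — equivalently, pick any vertex $c'\in[vw]$ with $d(c',c)\le\frac12$; it is distinct from $v,w$ once $d(v,w)\ge 3$, Lemma \ref{L:separator} gives a minimal $vw$-separator $S\ni c'$ of diameter $\le m$, $\g$ hits some $u\in S$, and $d(c,u)\le d(c,c')+d(c',u)\le \frac12+m$. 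So $\g$ meets $N_{m+\frac12}(c)$, establishing (BP) on the vertices with a constant $\D'$ slightly better than $m+1$; Proposition \ref{P: BP graphs} then yields (BP) with $\D=\D'+\frac32\le m+2$.

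I would then remark that (BP) together with Theorem \ref{Th: quasi-tree} gives that $G$ is quasi-isometric to a tree, which is the parenthetical claim. The main (really the only) obstacle is the bookkeeping around non-vertex midpoints and making sure the separator produced by Lemma \ref{L:separator} genuinely forces the arbitrary path $\g$ to come close to $c$ rather than merely close to some point of $S$; since $S$ has diameter at most $m$ and contains a vertex within $\frac12$ of $c$, every point of $S$ — in particular the vertex of $\g\cap S$ — lies within $m+\frac12$ of $c$, so the bound is clean. Everything else is a short triangle-inequality computation, and no new ideas beyond Lemma \ref{L:separator} and Proposition \ref{P: BP graphs} are needed.
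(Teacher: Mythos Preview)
Your proposal is correct and essentially identical to the paper's proof: reduce to (BP) on vertices via Proposition~\ref{P: BP graphs}, pick a vertex $v_0$ within $\tfrac12$ of the midpoint $c$, invoke Lemma~\ref{L:separator} to get a minimal $ab$-separator $S\ni v_0$ of diameter $\le m$, and conclude every $ab$-path meets $N_{m+1/2}(c)$. The only difference is that the paper singles out the case $m=0$ (where $G$ is a tree and $\Delta=0$), but this is not needed for the stated bound $\Delta=m+2$.
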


\begin{proof} If $m=0$ it is trivial to see that $G$ is a tree and it satisfies (BP) with $\D=0$.

Assume $m\geq 1$. By Proposition \ref{P: BP graphs}, it suffices to check the property for pairs of vertices. Thus, consider any pair of vertices $a,b\in V(G)$ and let $c$ be a midpoint of a geodesic $[ab]$. 

If $d(a,b)\leq 2$, then (BP) is trivial with $\Delta'=1$.

Suppose $d(a,b)\geq 3$. Then, there is some vertex $v_0$ in the interior of $[ab]$ with $d(v_0,c)\leq \frac12$. By Lemma \ref{L:separator}, since $[ab]$ is a geodesic, there exist a minimal $ab$-separator $S$ containing $v_0$. Thus, every $ab$-path contains a vertex in $S$ and since $diam(S)\leq m$, every $ab$-path passes through $N_m(v_0)\subset N_{m+\frac12}(c)$. Hence, (BP) is satisfied on the vertices with $\D'=m+\frac{1}{2}$ and, by Proposition \ref{P: BP graphs}, $G$ satisfies (BP) with $\D=m+2$.
\end{proof}

The following example shows that converse is not true.

\begin{example} Let $G$ be the graph whose vertices are all the pairs $(a,b)$ with either $a\in \NN$ and $b=0$ or $4n+1\leq a \leq 4n+3$ and  $1 \leq b \leq n$ for every $n\in \NN$, and such that $(a,b)$ is adjacent to $(a',b')$ if and only if either $b=b'$ and $|a'-a|=1$ or $a=a'$ and $|b'-b|=1$. See Figure \ref{BP}.

Now, notice that $S_n=\{4n+2,j\}_{0\leq j\leq n}$ defines a minimal $(4n,0)(4n+4,0)$-separator with diameter $n$ for every $n\in \NN$. Therefore, $G$ has minimal $ab$-separators arbitrarily big. However, to see that $G$ satisfies (BP), consider the map $f: V(G) \to V(G)$ such that $f(i,j)=(4n+2,j)$ for every $4n+1\leq i \leq 4n+3$ and $1\leq j \leq n$ and the identity on the rest of the vertices. It is trivial to check that $f$ extends to a $(1,2)$-quasi-isometry on $G$ where the image is a tree. Therefore, $G$ is quasi-isometric to a tree and satisfies (BP) (and it is $\varepsilon$-densely $(k,m)$-chordal). 

\begin{figure}[ht]
\centering
\includegraphics[scale=0.4]{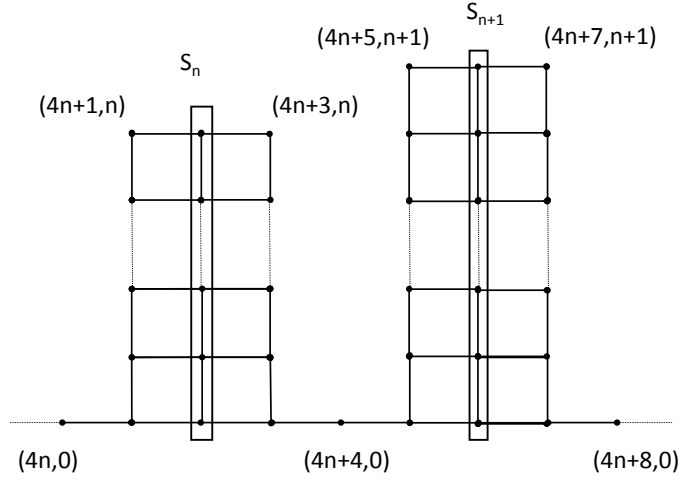}
\caption{Satisfying Bottleneck Property does not imply the existence of minimal vertex separators   with uniformly bounded diameter.}
\label{BP}
\end{figure}
\end{example}

\begin{remark} In the case of uniform graphs, the following theorem can also be obtained as a corollary of Proposition \ref{P: chordal separator} and Theorem \ref{Th: separator BP}. Also, it follows from Theorem 3 in \cite{MP} and Theorem \ref{Th: ch-BP}. However, the direct proof provides a better bound for the parameter.
\end{remark}

\begin{theorem}\label{T: chordal BP} If $G$ is $(k,1)$-chordal, then $G$ satisfies (BP). Moreover, it suffices to take $\D=\frac{k}{4}+\frac52$.
\end{theorem}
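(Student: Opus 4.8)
The plan is to prove directly that a $(k,1)$-chordal graph satisfies (BP), following the same strategy as in Theorem \ref{Th: separator BP} but exploiting the existence of $1$-shortcuts instead of small separators. By Proposition \ref{P: BP graphs}, it suffices to verify (BP) on pairs of vertices, and then one recovers the general constant by adding $\frac32$. So fix two vertices $a,b$, a geodesic $[ab]$ and its midpoint $c$, and take any $ab$-path $\gamma$; I want to show $\gamma$ meets $N_{\Delta'}(c)$ for a suitable $\Delta'$ depending only on $k$.

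First I would dispose of the short cases: if $d(a,b)$ is small (say $\le k$), then every point of $[ab]$, in particular $c$, is within $\frac{k}{2}$ of an endpoint, and since $\gamma$ contains $a$ and $b$, it trivially meets $N_{k/2}(c)$; so assume $d(a,b)$ is large. The main step is this: suppose, for contradiction, that some $ab$-path $\gamma$ avoids $N_{\Delta'}(c)$. Pick a vertex $v_0$ on $[ab]$ with $d(v_0,c)\le \frac12$. Since $\gamma$ avoids a neighborhood of $c$ but shares the endpoints $a,b$ with $[ab]$, I can extract (exactly as in the proof of Theorem \ref{Th: ch-BP}) a subarc $[a'b']$ of $[ab]$ through $v_0$ and a subarc $\gamma'$ of $\gamma$ from $a'$ to $b'$ such that $C = [a'b']\cup \gamma'$ is a cycle with $L(C)\ge k$, and moreover $\gamma'$ stays at distance $\ge \Delta' - \frac12$ (roughly) from $v_0$. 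Now apply $(k,1)$-chordality to $C$: there is a $1$-shortcut $\sigma$ joining two vertices $p,q$ of $C$ with $L(\sigma)=1 < d_C(p,q)$.

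The key point is that this forces a contradiction once $\Delta'$ is chosen large enough. Because $[a'b']$ is geodesic, a $1$-shortcut cannot join two vertices both lying on $[a'b']$ (a chord of length $1$ would contradict $d_C(p,q)>1$ for points on a geodesic only if they are not adjacent, but on a geodesic any two vertices at $C$-distance $>1$ are at $G$-distance $>1$ as well — I should phrase this carefully). Similarly one argues $\gamma'$ can be taken so that it has no internal $1$-shortcut (or one iterates/uses Remark \ref{R:strict} to get a strict shortcut and replaces $\gamma'$ by a shorter path, shrinking the problem). Hence the shortcut must join a vertex $p\in[a'b']$ to a vertex $q\in\gamma'$, with $d(p,q)=1$. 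But then $d(v_0,q)\le d(v_0,p)+1$, and $d(v_0,p)$ is controlled: $p$ lies on the geodesic $[a'b']$ whose endpoints are within bounded distance of $v_0$ (at most about $\Delta'$, since $a',b'\in\gamma$ would already be close to $[ab]$... actually $a',b'\in[ab]$, so I need the length of $[a'b']$ to be bounded, which it is once we know $\gamma$ only leaves $N_{\Delta'}(c)$ for a controlled portion — this is where the geometry of the extracted cycle matters). Tracking these constants I expect to land on $\Delta' = \frac{k}{4}+1$ on the vertices, hence $\Delta = \frac{k}{4}+\frac52$ after Proposition \ref{P: BP graphs}.

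The main obstacle I anticipate is the bookkeeping in extracting the cycle $C$ and bounding $L([a'b'])$: I need $\gamma$ to avoid $N_{\Delta'}(c)$ to conclude that the portion of $[ab]$ involved is short, and simultaneously I need the endpoints $a',b'$ to be the \emph{first and last} points where $\gamma$ meets $[ab]$ near $c$, so that $\gamma'$ genuinely stays far from $v_0$. A clean way is to let $a'$ be the last vertex of $\gamma\cap[a'c]$ before entering the forbidden region and $b'$ the first one after; then $[a'b']\subset N_{\Delta'+1}(c)$ has length $\le 2\Delta'+2$, and $\gamma'$ has $d_C$-distance to $v_0$ at least $\Delta'$ while its $G$-distance to $v_0$ would have to be $\le 1 + (\Delta'+1) = \Delta'+2$ via the shortcut endpoint $p$, which is not yet a contradiction — so the real content is choosing $\Delta'$ so that a $1$-shortcut on a cycle of this size is impossible, i.e. using that $L(C)\ge k$ together with the fact that both arcs are "as geodesic as possible" to bound the shortcut away. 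I would reconcile this by noting that the shortcut vertex $z\in\gamma'$ satisfies $d(v_0,z)\le \Delta'+2$ while $d_\gamma(\text{something})$... and then pushing the argument to say that the very existence of such $z$ contradicts $\gamma\cap N_{\Delta'}(c)=\emptyset$ once $\Delta' \ge \frac{k}{4}+1$. The careful constant-chasing here is the part I would grind through last, after the structure is in place.
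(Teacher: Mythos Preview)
Your overall strategy is the paper's strategy, and the constants you are aiming for ($\Delta'=\tfrac{k}{4}+1$ on vertices, hence $\Delta=\tfrac{k}{4}+\tfrac52$) are exactly right. The gap is in how you choose $a'$ and $b'$. You try to take them as the ``last/first'' points where $\gamma$ meets $[ab]$ near $c$; this makes $[a'b']$ depend on $\gamma$ and leaves you unable to bound $d(v_0,p)$, which is why your computation ends with ``not yet a contradiction''. The clean move is to \emph{fix} $a',b'\in[ab]$ with $d(a',c)=d(c,b')=\tfrac{k}{4}$, independently of $\gamma$. Then $[a'b']$ is a geodesic of length $\tfrac{k}{2}$, and since $\gamma$ avoids $N_{k/4+1}(c)$, the union $[ab]\cup\gamma$ contains a cycle $C$ with $[a'b']\subset C$; in particular $L(C)\ge k$.

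Now apply $(k,1)$-chordality to $C$. The geodesic part $C\cap[ab]$ admits no $1$-shortcut, so any $1$-shortcut $\sigma$ has at least one endpoint in $\gamma$. If $\sigma$ joins a vertex in the interior of $[a'b']$ to a vertex $z_2\in\gamma$, you are done: $d(c,z_2)\le d(c,z_1)+1<\tfrac{k}{4}+1$, contradicting $\gamma\cap N_{k/4+1}(c)=\emptyset$. Otherwise (both endpoints in $\gamma$, or one endpoint in $(C\cap[ab])\setminus[a'b']$), replace $C$ by the subcycle determined by $\sigma$ that still contains $[a'b']$; this new cycle again has length $\ge k$ and is strictly shorter than $C$, so the iteration terminates. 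This is precisely the ``shrinking'' you allude to, but the invariant you must track is $[a'b']\subset C$, not merely ``$\gamma'$ has no internal $1$-shortcut''. With that invariant the constant-chasing closes immediately.
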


\begin{proof} Consider any pair of vertices $a,b$, any geodesic $[ab]$ in $G$ and the midpoint $c$ in $[ab]$. If $d(a,b)\leq \frac{k}{2}+2$, then (BP) is trivially satisfied for $\D'=\frac{k}{4}+1$. Suppose $d(a,b)> \frac{k}{2}+2$ and that there is an $ab$-path $\g$ not intersecting $N_{k/4+1}(c)$. 
Let $a'\in [ac]\subset [ab]$ and $b'\in [cb]\subset [ab]$ such that $d(a',c)=d(c,b')=\frac{k}{4}$. Then, since $\g$ does not intersect $N_{k/4+1}(c)$, there is a cycle $C$ contained in $[ab]\cup \g$ such that $[a'b']\subset C$ and $L(C)\geq k$.

Claim: there is a 1-shortcut in $C$ joining a vertex in the interior of $[a'b']$ to a vertex in 
$\g$. Since $G$ is $(k,1)$-chordal, there is a 1-shortcut, $\s_1$, in $C$. If $\s_1$ joins a vertex in the interior of $[a'b']$ to a vertex in $\g$, we are done. Otherwise,  we obtain a new cycle, 
$C_1$, such that $[a'b']\subset C_1$ and, therefore, $L(C_1)\geq k$. Repeating the process we finally obtain a 1-shortcut joining a vertex $z_1$ in the interior of $[a'b']$ to a vertex $z_2$ in $\g$. 

Therefore, $d(c,z_2)\leq d(c,z_1)+1< \frac{k}{4}+1$ and $z_2\in N_{k/4+1}(c)$ leading to contradiction.

Thus, $G$ satisfies (BP) on the vertices with $\D=\frac{k}{4}+1$ and, by Proposition \ref{P: BP graphs}, $G$ satisfies (BP) with $\D=\frac{k}{4}+\frac52$.
\end{proof}

\begin{corollary}\label{C: chordal qi} If $G$ is $(k,1)$-chordal, then $G$ is quasi-isometric to a tree.
\end{corollary}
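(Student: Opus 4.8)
The plan is to deduce Corollary \ref{C: chordal qi} directly from Theorem \ref{T: chordal BP} together with the classical characterization of quasi-trees by the Bottleneck Property. Theorem \ref{T: chordal BP} tells us that a $(k,1)$-chordal graph $G$ satisfies (BP), and Theorem \ref{Th: quasi-tree} (Manning's theorem, \cite[Theorem 4.6]{Man}) says that a geodesic metric space satisfies (BP) if and only if it is quasi-isometric to a tree. Since $G$, equipped with the length metric in which every edge has length one, is a geodesic metric space (as noted in the introduction), the two results combine immediately.

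Concretely, I would argue as follows. Suppose $G$ is $(k,1)$-chordal. By Theorem \ref{T: chordal BP}, $G$ satisfies (BP) (with $\D = \frac{k}{4} + \frac52$, though the precise constant is irrelevant here). Since $G$ is a connected geodesic metric space, Theorem \ref{Th: quasi-tree} applies and yields that $G$ is quasi-isometric to a tree. This is the entire argument.

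There is essentially no obstacle: the corollary is a one-line consequence of results already available in the excerpt, and the only thing to check is that the hypotheses of Theorem \ref{Th: quasi-tree} are met, namely that $G$ is a geodesic metric space — which is guaranteed by the standing assumptions on graphs stated at the start of the paper (every edge has length one, interior points of edges are points of $G$, and $G$ is connected). One could alternatively route the proof through Corollary \ref{C: ch-BP} and Theorem \ref{Th: ch-BP} (first showing $(k,1)$-chordal graphs are $\varepsilon$-densely $(k,m)$-chordal), but that is more indirect; the cleanest path is the direct appeal to Theorem \ref{T: chordal BP} and Manning's theorem.

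\begin{proof} If $G$ is $(k,1)$-chordal, then by Theorem \ref{T: chordal BP}, $G$ satisfies (BP). Since $G$ is a connected geodesic metric space, Theorem \ref{Th: quasi-tree} implies that $G$ is quasi-isometric to a tree. \end{proof}
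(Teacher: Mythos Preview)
Your proof is correct and is exactly the argument the paper intends: the corollary is stated immediately after Theorem \ref{T: chordal BP} with no separate proof, since it follows at once from that theorem together with Manning's characterization (Theorem \ref{Th: quasi-tree}).
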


\begin{remark} Corollary \ref{C: chordal qi} follows also from Proposition \ref{P: chordal separator} and Corollary \ref{Cor: hyperbolic} in the case of uniform graphs.
\end{remark}

\begin{remark} The converse to Theorem \ref{T: chordal BP} or Corollary \ref{C: chordal qi} is not true. It is immediate to check that the graph from Example \ref{Ex: not k-1} is quasi-isometric to a tree through the map sending every cycle $C_n$ to the vertex $n$.
\end{remark}

\section{Minimal vertex $r$-separators}\label{S:4}

\begin{definition} Given $r\in \mathbb{N}$, two vertices $a$ and $b$ are \emph{$r$-separated} by a subset $S\subset V(G)$ if considering the connected components of $G\setminus S$, $G_a$ and $G_b$ containing $a$ and $b$ respectively, for every pair of vertices $v\in G_a$ and $w\in G_b$, 
$d(v,w)>r$. If $a$ and $b$ are two vertices $r$-separated by $S$ then $S$ is said to be an \emph{ $ab$-$r$-separator}.
\end{definition}

\begin{remark} Notice that separated means 1-separated. 
\end{remark}

\begin{definition}  $S$ is a \emph{minimal $ab$-$r$-separator} if no proper subset of $S$ $r$-separates $a$ and $b$. Finally, $S$ is a \emph{minimal vertex $r$-separator} if it is a minimal $r$-separator for some pair of vertices.
\end{definition}

\begin{remark}\label{R: r-separator} Given any minimal $ab$-$r$-separator $S$, every vertex in $S$ is either adjacent to $G_a$ or $G_b$. Moreover, if $r\geq 2$, then there are two disjoint subsets $S_a$ and $S_b$ such that $S=S_a\cup S_b$ where the vertices in $S_a$ are adjacent to $G_a$ and the vertices in $S_b$ are adjacent to $G_b$. Also, for every vertex $v$ in $S_a$,  $d(v,S_b)=r-1$.
\end{remark}

\begin{lemma}\label{L: r-separator} Let $G$ be a uniform graph and $r\geq 2$.
Given any geodesic $[ab]$ with $d(a,b)>r$ and two vertices $v_1,v_2\in [ab]$ distinct from $a,b$ with $d(v_1,v_2)=r-1$, then there is a minimal $ab$-$r$-separator containing $\{v_1,v_2\}$.
\end{lemma}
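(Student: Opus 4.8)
The plan is to mimic the argument in Lemma \ref{L:separator}, producing an explicit $ab$-$r$-separator out of a metric sphere and then passing to a minimal subset, taking care that the two prescribed vertices survive. Write $\varepsilon = d(a,v_1)$, so that $v_1 \in S(a,\varepsilon)$, and note that since $d(v_1,v_2) = r-1$ and $v_1,v_2$ lie on the geodesic $[ab]$ with $v_2$ on the $b$-side of $v_1$, we have $v_2 \in S(a,\varepsilon + r - 1)$. The natural candidate is $S_0 := S(a,\varepsilon) \cup S(a,\varepsilon+1) \cup \cdots \cup S(a,\varepsilon+r-1)$, the union of $r$ consecutive metric spheres around $a$; since $G$ is uniform each sphere is finite, so $S_0$ is finite. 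I would first check that $S_0$ is an $ab$-$r$-separator: removing $S_0$ splits off the ``inner'' ball $B(a,\varepsilon)$ from the ``outer'' region $\{w : d(a,w) \geq \varepsilon + r\}$, and any vertex $v$ in the component $G_a$ of $a$ has $d(a,v) < \varepsilon$ while any vertex $w$ in the component $G_b$ of $b$ has $d(a,w) \geq \varepsilon + r$ (here one uses $d(a,b) > r$, together with $d(a,v_1)=\varepsilon \ge 1$ and $d(v_1,b) \ge r$, to guarantee that $b$ really does lie in the outer region and hence $G_b$ is nonempty and disjoint from $G_a$); hence $d(v,w) \geq (\varepsilon + r) - \varepsilon = r$, but we need strict inequality $> r$, so I would be slightly more careful: any path from $v \in G_a$ to $w \in G_b$ must cross all $r$ spheres $S(a,\varepsilon),\dots,S(a,\varepsilon+r-1)$ in $S_0$, and since it avoids $S_0$ entirely it cannot go from $d(a,\cdot) < \varepsilon$ to $d(a,\cdot) \geq \varepsilon + r$ at all — so in fact $G_a \subseteq B(a,\varepsilon)$ and $G_b \subseteq \{d(a,\cdot) \ge \varepsilon+r\}$, giving $d(v,w) \ge r$; to upgrade to $>r$, observe a geodesic realizing $d(v,w)=r$ would pass through exactly one vertex on each of the $r$ spheres, but its endpoints $v,w$ already satisfy $d(a,v)\le \varepsilon-1$ and $d(a,w)\ge \varepsilon+r$, forcing $d(v,w)\ge r+1$. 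So $S_0$ is an $ab$-$r$-separator and moreover $[ab] \cap S_0 = \{v_1, v_2, \dots\}$ consists precisely of the $r$ consecutive vertices of $[ab]$ between $v_1$ and $v_2$ inclusive.

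Next I would pass to a minimal $ab$-$r$-separator $S \subseteq S_0$: since $S_0$ is finite, choose $S$ minimal among subsets of $S_0$ that $r$-separate $a$ and $b$. The remaining task is to show $v_1, v_2 \in S$. For this the key point is that $v_1$ and $v_2$ are the \emph{only} vertices of $S_0$ lying on the geodesic segment $[ab]$ at their respective distances from $a$: $v_1$ is the unique point of $[ab] \cap S(a,\varepsilon)$ and $v_2$ is the unique point of $[ab] \cap S(a,\varepsilon+r-1)$. Suppose $v_1 \notin S$; then $S$ is contained in $S(a,\varepsilon+1)\cup\cdots\cup S(a,\varepsilon+r-1)\cup (S(a,\varepsilon)\setminus\{v_1\})$, and I claim the subpath of $[ab]$ from $a$ to $b$ is then an $ab$-path whose $G_a$-component and $G_b$-component endpoints are too close — more precisely, walking along $[ab]$ from $a$, the first $\varepsilon$ vertices (up to $v_1$) avoid $S$, so $v_1 \in G_a$; similarly the portion of $[ab]$ beyond $v_1$ within distance $r-1$... here I must be a bit delicate, since $v_2,\dots$ do lie in $S_0$. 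The cleaner route: if $v_1\notin S$, then since every $ab$-path must still be $r$-separated by $S$, consider the path $[ab]$ itself; its vertex $v_1$ then lies in $G_a$ (all vertices of $[ab]$ strictly between $a$ and $v_1$ are in $B(a,\varepsilon)$, hence in $G_a$, and they are pairwise within distance $<\varepsilon\le$ ... forming a connected set with $v_1$), and the vertex $v_2$ — if also $v_2\notin S$ — lies in $G_b$; but $d(v_1,v_2)=r-1 \not> r$, contradicting that $S$ $r$-separates $a,b$. So at least we cannot have \emph{both} $v_1,v_2\notin S$. To force \emph{both} into $S$, I would instead argue as in Lemma \ref{L:separator} that $S$ being minimal and $[ab]\cap S_0$ being a single geodesic run forces the two ``extreme'' vertices $v_1,v_2$ to be retained: if $v_1 \notin S$ then $S\cap S(a,\varepsilon)=\emptyset$ (since $v_1$ was the only point of $S_0$ on that sphere $\cap [ab]$ — wait, no: $S(a,\varepsilon)$ may contain other vertices of $S_0$ off $[ab]$).

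So the genuine obstacle — and the step I expect to require the most care — is exactly this last point: showing both $v_1$ and $v_2$ must lie in the minimal separator $S$, rather than just one of them or neither. I think the right fix is to choose $S_0$ more economically and symmetrically, or to run the minimization argument of Lemma \ref{L:separator 2} (via the explicit enumeration of $ab$-paths and the inductive construction of $S_i$) rather than the crude ``finite subset'' argument, since that construction gives better control over which vertices are forced in. Concretely: start from $S_0 = $ (the two spheres $S(a,\varepsilon)$ and $S(a,\varepsilon+r-1)$, since by Remark \ref{R: r-separator} a minimal $ab$-$r$-separator with $r\ge 2$ splits as $S_a\cup S_b$ with $S_a$ adjacent to $G_a$ and $S_b$ adjacent to $G_b$ and $d(v,S_b)=r-1$ for $v\in S_a$), take $S_a\subseteq S(a,\varepsilon)$ and $S_b\subseteq S(a,\varepsilon+r-1)$, minimize within each, and use uniqueness of $v_1\in[ab]\cap S(a,\varepsilon)$ and $v_2\in[ab]\cap S(a,\varepsilon+r-1)$ together with the fact that $[ab]$ is a geodesic $ab$-path (so $v_1$ cannot be dropped from $S_a$ without $[ab]$ witnessing $a,b$ not $r$-separated, and likewise $v_2$ from $S_b$) to conclude $v_1,v_2\in S$. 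Once the structure of $r$-separators from Remark \ref{R: r-separator} is exploited, the two halves decouple and the argument reduces essentially twice to the proof of Lemma \ref{L:separator}.
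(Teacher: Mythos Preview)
Your final approach is correct and is essentially the paper's proof, but you reach it only after a detour. The paper goes straight to the two-sphere construction: with $\varepsilon_1=d(a,v_1)$ and $\varepsilon_2=d(v_2,b)$ it takes
\[
S_0:=S(a,\varepsilon_1)\cup S(b,\varepsilon_2),
\]
one sphere around each endpoint rather than two around $a$. This is finite by uniformity, and because $[ab]$ is geodesic one has $[ab]\cap S_0=\{v_1,v_2\}$ exactly. Your variant $S(a,\varepsilon)\cup S(a,\varepsilon+r-1)$ works for the same reason and gives the same intersection with $[ab]$; either choice avoids the difficulty you ran into with the thick annulus $\bigcup_{j=0}^{r-1}S(a,\varepsilon+j)$, which meets $[ab]$ in $r$ vertices and makes the survival of $v_1,v_2$ under minimization unclear.

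The one point you leave only half-argued is why \emph{both} $v_1$ and $v_2$ lie in a minimal $S\subseteq S_0$. Here is the clean version (which the paper leaves implicit). Since $[ab]\cap S_0=\{v_1,v_2\}$, every vertex of $[ab]$ except possibly $v_1,v_2$ lies in $G\setminus S$. If $v_1\notin S$, then walking along $[ab]$ from $a$ we stay in $G\setminus S$ until we hit $v_2$; hence the neighbour of $v_2$ on the $a$-side lies in $G_a$, while the neighbour on the $b$-side lies in $G_b$, and these two vertices are at distance $2\le r$, contradicting $r$-separation. Symmetrically for $v_2$. This replaces your appeal to ``minimize within each'' and the structure of Remark~\ref{R: r-separator}, which is not needed here.
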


\begin{proof} Suppose $[ab]$ is a geodesic with $d(a,b)>r$. Let us assume that $d(a,v_1)<d(a,v_2)$ and define $\varepsilon_1=d(a,v_1)$ and $\varepsilon_2=d(v_2,b)$. Since $G$ is uniform, for every vertex $v\in G$ the set $S(v,\varepsilon)$ is finite for every $\varepsilon\in \NN$. Let $S_0:=S(a,\varepsilon_1)\cup S(b,\varepsilon_2)$. It is immediate to check that $S_0$ is an $ab$-r-separator and $[ab]\cap S_0=\{v_1,v_2\}$. Since $S_0$ is finite, then there is a minimal subset $S\subset S_0$ which is also an $ab$-r-separator. Finally, since $[ab]\cap S_0=\{v_1,v_2\}$, $v_i\in S$ for $i=1,2$.
\end{proof}

\begin{theorem}\label{T1} Let $G$ be a uniform graph and $r\geq 2$. If every minimal vertex $r$-separator has diameter at most $m$ with $m\leq r$, then $G$ is $(r+\frac12)$-densely $(2r+2,r)$-chordal.
\end{theorem}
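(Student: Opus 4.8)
The plan is to imitate the proofs of Theorems~\ref{T: separator} and \ref{T: separator 2}, with Lemma~\ref{L: r-separator} playing the role Lemma~\ref{L:separator} played there. Fix a cycle $C$ with $L(C)\geq 2r+2$ and a vertex $v\in V(C)$. It suffices to produce a strict shortcut of $C$ of length $\leq r$ whose associated shortcut vertex is at $d_C$-distance $<r$ from $v$: letting $v$ range over $V(C)$, those shortcut vertices then form an $(r+\frac12)$-dense subset of $(C,d_C)$, and all the shortcuts have length $\leq r$. Let $a,b\in V(C)$ be the two vertices with $d_C(a,v)=d_C(v,b)=r$; they are distinct, the arc $\gamma_1$ of $C$ from $a$ to $b$ avoiding $v$ has $L(\gamma_1)=L(C)-2r\geq 2$, and the arc $\gamma_0$ of $C$ from $a$ to $b$ through $v$ has length $2r$ with $v$ as its midpoint.

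The main case is that $\gamma_0$ is a geodesic, i.e. $d(a,b)=2r$. Then I would apply Lemma~\ref{L: r-separator} to $[ab]=\gamma_0$ with $v_1:=v$ and $v_2$ the vertex of $\gamma_0$ on the $b$-side with $d_{\gamma_0}(v,v_2)=r-1$; these are distinct from $a$ and $b$, and $d(v_1,v_2)=r-1$ because $\gamma_0$ is a geodesic. This produces a minimal $ab$-$r$-separator $S$ with $\{v,v_2\}\subset S$, and by hypothesis $diam(S)\leq m\leq r$. Since $\gamma_1$ is an $ab$-path it meets $S$ at some vertex $w$, which lies in the interior of $\gamma_1$ because $S$ is disjoint from the components $G_a,G_b$; hence $d(v,w)\leq diam(S)\leq r$, whereas $d_C(v,w)=r+\min\{d_{\gamma_1}(a,w),d_{\gamma_1}(b,w)\}\geq r+1$. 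So a geodesic $[vw]$ is a shortcut of $C$ of length $\leq r$, and Remark~\ref{R:strict} extracts from it a strict shortcut of $C$ of length $\leq r$ whose associated shortcut vertex $v'$ satisfies $d_C(v,v')<d(v,w)\leq r$, exactly as needed.

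When $\gamma_0$ is not a geodesic we have $d(a,b)<2r$, and I would split this further. If one of the two length-$r$ half-arcs of $\gamma_0$ emanating from $v$ fails to be a geodesic, then, reading that half-arc outward from $v$ and letting $i^{*}$ be the first index at which $d(v,\cdot)$ stops strictly increasing, one obtains a vertex $u$ with $d_C(v,u)=i^{*}+1\leq r$ and $d(v,u)\leq i^{*}\leq r-1<d_C(v,u)$; the hypothesis $L(C)\geq 2r+2$ is what makes this shortcut of the half-arc a genuine shortcut of $C$, and Remark~\ref{R:strict} again gives a strict shortcut of $C$ of length $<r$ with shortcut vertex within $d_C$-distance $<r$ of $v$. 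The remaining possibility is $d(v,a)=d(v,b)=r$ together with $d(a,b)<2r$; here I would replace $\gamma_1$ by a geodesic $[ab]$, form the cycle $\widehat C=\gamma_0\cup[ab]$ through $v$ (cutting along the first and last points where $[ab]$ meets $\gamma_0$ so that $\widehat C$ is simple), observe that any $ab$-path is at least as long as $[ab]$, so that $d_{\widehat C}(p,q)\leq d_C(p,q)$ for all $p,q$ on $\gamma_0$ and hence every shortcut of $\widehat C$ supported on $\gamma_0$ is a shortcut of $C$, and run an induction on $L(C)$ to produce from $\widehat C$ a shortcut of $C$ near $v$ whenever $L(\widehat C)<L(C)$. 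The residual configurations, in which no such strict reduction is available, cannot occur under the hypothesis: by the structural Remark~\ref{R: r-separator} they would force a minimal vertex $r$-separator of diameter $>m$.

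The step I expect to be the real obstacle is precisely this last sub-case ($d(v,a)=d(v,b)=r$ with $d(a,b)<2r$): one has to make the reduction and induction close, keeping $v$ on the smaller cycle, and check that the shortcut the induction supplies genuinely transfers to one of $C$ \emph{located near $v$}; and one must rule out the configurations with no strict reduction, which is exactly where the bound $diam(S)\leq m\leq r$ on minimal vertex $r$-separators is used, through Remark~\ref{R: r-separator}. Everything else is a routine adaptation of the proof of Theorem~\ref{T: separator 2}.
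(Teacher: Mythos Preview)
Your main case (when $\gamma_0$ is a geodesic) and your sub-case (a) are correct and parallel the paper. The genuine gap is sub-case (b), where both half-arcs of $\gamma_0$ are geodesics but $d(a,b)<2r$: the induction you sketch does not close. First, $L(\widehat C)=2r+d(a,b)$ need not be smaller than $L(C)$ (e.g.\ if $L(C)=2r+2$ then $L(\gamma_1)=2$ while $d(a,b)$ can be as large as $2r-1$), so there may be no strict reduction. Second, even when $L(\widehat C)<L(C)$ it may happen that $L(\widehat C)<2r+2$, in which case the inductive hypothesis says nothing. Third, a shortcut of $\widehat C$ with shortcut vertex near $v$ may land on the new arc $[ab]$ rather than on $\gamma_0$, and you have not shown how to convert that into a shortcut of $C$ located near $v$. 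The closing appeal to Remark~\ref{R: r-separator} is not an argument.

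The paper sidesteps this sub-case by a different, \emph{asymmetric} placement of $a,b$: it takes $d_C(a,x_1)=1$ and $d_C(x_1,b)=r$, so the arc $\gamma_1$ through $x_1$ has length exactly $r+1$. Then if $\gamma_1$ is not a geodesic, any shortcut on it has length $\leq r$ and every vertex of $\gamma_1$ lies within $d_C$-distance $r$ of $x_1$; your problematic sub-case simply disappears. The cost is that in the geodesic case the paper can no longer use an arbitrary $w\in\gamma_2\cap S$ (a vertex of $S$ on $\gamma_2$ adjacent to $a$ would have $d_C(x_1,w)=2$, possibly $\leq m$); it must exploit the $r$-separation to locate a vertex $y_2\in\gamma_2\cap S$ with $d_C(x_1,y_2)\geq r+1$. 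Your symmetric placement makes that step trivial, but it is the asymmetric placement that makes the non-geodesic case go through without any induction.
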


\begin{proof} Let $C$ be any cycle with $L(C)\geq 2r+2$ and let $x_1$ be any vertex in $C$. Then, consider two vertices $a,b$ in $C$ such that $d_C(a,b)= r+1$, $d_C(a,x_1)= 1$ and $d_C(x_1,b)= r$. Let $\gamma_1$ and $\gamma_2$ be the two independent paths joining $a$ and $b$ defined by $C$ and assume $x_1\in \gamma_1$. Consider $x_2\in \gamma_1$ with $x_2$ between $x_1$ and $b$ such that $d_C(x_1,x_2)=r-1$ (and $d_C(x_2,b)= 1$). 

If $\gamma_1$ is not a geodesic, then there is a shortcut with length at most $r$ and a shortcut vertex $v$ such that $d_C(x_1,v)< r$.

If $\gamma_1$ is a geodesic, by Lemma \ref{L: r-separator}, there exists a minimal $ab$-$r$-separator $S$ containing $x_1,x_2$. 
Then, there exist $y_1,y_2\in \gamma_2\cap S$, with $y_1$ between $a$ and $y_2$, such that $d_C(y_1,y_2)\geq r-1$, $d_C(a,y_1)\geq 1$ and $d_C(y_2,b)\geq 1$. 
Since $diam(S)\leq m$, then $d(x_1,y_2)\leq m$. 
Since $d_C(x_1,y_2)\geq r+1\geq m+1$, there is a shortcut $\sigma$ in $C$ joining $x_1$ and $y_2$ with $L(\sigma)\leq m\leq r$ and with an associated shortcut vertex $v$ such that $d_C(x_1,v)<m\leq r$.

Thus, for every vertex $x_1$, there is a shortcut vertex $v$ such that $d_C(x_1,v)<r$ and therefore, shortcut vertices define a $(r+\frac12)$-dense subset in $C$.
\end{proof}

\begin{theorem}\label{T2} Let $G$ be a uniform graph and $r\geq 2$. If for every minimal $ab$-$r$-separator $S$ either $S_a$ or $S_b$ has diameter at most $m$, 
then $G$ is $\varepsilon$-densely $(k,\frac{k}{2})$-chordal with $k=2m+2r+2$ and $\varepsilon=\max\{\frac{m+1}{2}+r,m+\frac{1}{2}\}$ .
\end{theorem}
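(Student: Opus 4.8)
The plan is to mimic the proof of Theorem \ref{T1}, but to keep the two halves $S_a$ and $S_b$ of a minimal $ab$-$r$-separator apart and to split the argument according to which of them is the controlled one. Fix a cycle $C$ with $L(C)\ge k=2m+2r+2$ and an arbitrary vertex $x\in C$; the aim is to exhibit a strict shortcut of $C$ of length at most $\frac k2=m+r+1$ having an associated shortcut vertex at $d_C$-distance less than $\e$ from $x$. Carrying this out for every vertex of every such cycle produces the required $\e$-dense set of shortcut vertices, with all shortcuts of length $\le m+r+1=\frac k2$, i.e.\ $G$ is $\e$-densely $(k,\frac k2)$-chordal.

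For the construction I would fix one of the two sub-arcs of $C$ and, on it, choose two vertices $v_1,v_2$ straddling $x$ with $d_C(v_1,v_2)=r-1$ (so $d_C(v_1,x)+d_C(v_2,x)=r-1$), and then two further vertices $a,b$ lying outside $v_1$ and $v_2$ on this arc with $d_C(a,v_1)=d_C(v_2,b)$ equal to about $m+1$; call this sub-arc $\g_1$ and the complementary one $\g_2$. The outer gap is chosen so that every point of $\g_2$ lies at $d_C$-distance more than $m$ from $v_1$ and from $v_2$ (reaching $\g_2$ from $v_i$ inside $C$ forces passing through $a$ or $b$). If $\g_1$ is not a geodesic of $G$ the argument branches off into a separate, somewhat delicate case built directly from the shortcut provided by the non-geodesic defect; otherwise $\g_1=[ab]$ is a geodesic with $d(a,b)>r$, the pair $v_1,v_2\in[ab]$ lies at distance exactly $r-1$ and avoids $a,b$, and Lemma \ref{L: r-separator} furnishes a minimal $ab$-$r$-separator $S$ with $v_1,v_2\in S$. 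By Remark \ref{R: r-separator} (here $r\ge2$) one has $S=S_a\cup S_b$ with $S_a\cap S_b=\emptyset$, and since on the geodesic $\g_1$ the vertex $v_1$ sits on the $a$-side of the separator while $v_2$ sits on the $b$-side, $v_1\in S_a$ and $v_2\in S_b$.

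Since $\g_2$ is an $ab$-path it meets $S$; reading it from $a$, its first vertex of $S$ is adjacent to $G_a$ and hence lies in $S_a$ --- call it $y_1$ --- and similarly $\g_2$ contains a vertex $y_2\in S_b$. Now I would split on the hypothesis. If $\diam(S_a)\le m$, then $d(v_1,y_1)\le m$ while $d_C(v_1,y_1)>m$ (as $y_1\in\g_2$), so a geodesic $[v_1y_1]$ is a shortcut of $C$ of length $\le m\le\frac k2$, and by Remark \ref{R:strict} it carries an associated shortcut vertex within $d_C$-distance $<m$ of $v_1$, hence within $<m+d_C(v_1,x)$ of $x$. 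If instead $\diam(S_b)\le m$, the symmetric argument with $v_2$ and $y_2$ gives a shortcut of length $\le m\le\frac k2$ with a shortcut vertex within $<m+d_C(v_2,x)$ of $x$. Either way $x$ has a shortcut vertex nearby, and together with the non-geodesic cases this yields the $\e$-density.

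The main obstacle is the metric bookkeeping needed to bring the constant down to the claimed $\e=\max\{\frac{m+1}{2}+r,\,m+\frac12\}$; the crude estimates above only give something of order $m+\frac r2$. Sharpening it calls for (i) optimizing the exact positions of $a,b$ and the way $v_1,v_2$ straddle $x$, the two terms of the $\max$ corresponding to two regimes --- when $m$ is large relative to $r$ one should rather let $x$ itself be a separator vertex and settle for the coarser bound $m+\frac12$ --- and (ii) a careful treatment of the degenerate configurations, namely $\g_1$ (or one of its end-segments, or the segment $[v_1v_2]$) failing to be a geodesic, and a witness $y_1$ or $y_2$ landing at small $d_C$-distance from $a$ or $b$; the half-integer slacks are the usual artefact of passing from the vertices of $C$ to all of $(C,d_C)$.
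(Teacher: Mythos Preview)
Your overall plan is the right one, and you correctly identify that one must branch on whether the bounded half of the separator is $S_a$ or $S_b$. But there is a genuine gap in the execution, and it is precisely the place you flag as ``somewhat delicate''.

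With your symmetric placement --- $v_1,v_2$ straddling $x$ and $a,b$ pushed to $d_C$-distance about $m+1$ beyond them --- the arc $\gamma_1$ has length roughly $2(m+1)+(r-1)=2m+r+1$. If $\gamma_1$ happens not to be geodesic, the shortcut you extract from it can have length up to $2m+r$, which exceeds the required bound $\tfrac{k}{2}=m+r+1$ as soon as $m\ge 2$. So in that branch you do not actually produce a $\tfrac{k}{2}$-shortcut, and no amount of bookkeeping on $\varepsilon$ fixes this; the problem is the length of $\gamma_1$, not the density constant. The reason you were forced to take $\gamma_1$ this long is that with a \emph{generic} minimal $ab$-$r$-separator you have no control on where $y_1,y_2$ land on $\gamma_2$, so you need the $m{+}1$ buffer at both ends to force $d_C(v_i,y_i)>m$.

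The paper sidesteps this by two moves you did not make. First, it takes $x_1$ itself (not a straddling pair) as one of the two separator vertices and places it \emph{asymmetrically}, at distance about $\tfrac{m}{2}$ from $a$, so that $L(\gamma_1)=m+r+1=\tfrac{k}{2}$ exactly; this is what makes the non-geodesic branch harmless. Second --- and this is the point you are missing --- it does not merely invoke Lemma~\ref{L: r-separator} for \emph{some} minimal $ab$-$r$-separator, but uses the specific one built in the proof of that lemma, namely a minimal $S$ contained in $S_{\varepsilon_1}(a)\cup S_{\varepsilon_2}(b)$. That sphere structure forces every vertex of $S_a$, in particular $y_1\in\gamma_2\cap S_a$, to satisfy $d(a,y_1)=d(a,x_1)\ge\tfrac{m+1}{2}$; from this one reads off $d_C(x_1,y_1)\ge m+1$ directly, without any buffer at the endpoints. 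Your proposed sharpening (``let $x$ itself be a separator vertex'') is exactly the first move, but without the second you cannot keep $\gamma_1$ short and still guarantee the shortcut.
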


\begin{proof} Let $C$ be any cycle with $L(C)\geq 2m+2r+2$ and $x_1$ be any vertex in $C$. Then, consider two vertices $a,b$ in $C$ such that $d_C(a,b)= m+r+1$, $d_C(a,x_1)= \frac{m+1}{2}$ and 
$d_C(x_1,b)= \frac{m+1}{2}+r$ if $m$ is odd, and $d_C(a,x_1)= \frac{m}{2}+1$ and 
$d_C(x_1,b)= \frac{m}{2}+r$ if $m$ is even.  
Let $\gamma_1$ and $\gamma_2$ be the two independent paths joining $a$ and $b$ defined by $C$ and assume $x_1\in \gamma_1$. Consider $x_2\in \gamma_1$ with $x_2$ between $x_1$ and $b$ such that $d_C(x_1,x_2)=r-1$ (and therefore, $d_C(x_2,b)\geq \frac{m}{2}+1>\frac{m}{2}$).

If $\gamma_1$ is not a geodesic, then there is a shortcut with length at most $m+r+1$ and a shortcut vertex $v$ such that 
$d_C(x_1,v)< \frac{m}{2}+r$.

If $\gamma_1$ is a geodesic, consider $S$ the minimal $ab$-$r$-separator containing $x_1,x_2$  built in the proof of Lemma \ref{L: r-separator}.
and let us assume, without loss of generality, that $S_a$ has diameter at most $m$.  Then, by construction, there exists $y_1\in \gamma_2\cap S$ such that 
$d_{\g_2}(a,y_1)\geq d(a,y_1)=d(a,x_1)\geq \frac{m+1}{2}$. Since $diam(S_a)\leq m$, then $d(x_1,y_1)\leq m$. However, $d_C(x_1,y_1)\geq \min\{m+1,d_{\g_1}(x_1,b)+d(b,y_1)\}= m+1$ and therefore, there is a shortcut $\sigma$ in $C$ joining $x_1$ and $y_1$ with $L(\sigma)\leq m$. Moreover, there is a shortcut vertex $v$ such that $d_C(x_1,v)< m$.

Thus, for every vertex $x_1$, there is a shortcut vertex $v$ with 
$d_C(x_1,v)<\min\{\frac{m}{2}+r,m\}$  and therefore, shortcut vertices define an 
$\varepsilon$-dense subset in $C$ with $\varepsilon=\max\{\frac{m+1}{2}+r,m+\frac{1}{2}\}$.
\end{proof}

Then, from Theorems \ref{Th: ch-BP} and \ref{T2}, we can obtain immediately the following:

\begin{corollary} Let $G$ be a uniform graph and $r\geq 2$. If for every minimal $ab$-$r$-separator $S$ either $S_a$ or $S_b$ has diameter at most $m$, then $G$ satisfies (BP), i.e., $G$ is quasi-isometric to a tree.
\end{corollary}

Also, from theorems \ref{Th: hyperbolic}, \ref{T1} and \ref{T2} we obtain:

\begin{corollary} Let $G$ be a uniform graph and $r\geq 2$. If every minimal vertex $r$-separator has diameter at most $m$ with $m\leq r$, then $G$ is $\delta$-hyperbolic. Moreover, 
$\delta(G)\leq 2r+\frac12$.
\end{corollary}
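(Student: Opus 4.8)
The plan is to derive this corollary directly by composing Theorem \ref{T1} with Theorem \ref{Th: hyperbolic}; Theorem \ref{T2} provides an alternative but quantitatively weaker route under the same hypothesis.

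First I would apply Theorem \ref{T1}. Since $G$ is uniform, $r\geq 2$, and every minimal vertex $r$-separator has diameter at most $m$ with $m\leq r$, that theorem tells us that $G$ is $(r+\frac12)$-densely $(2r+2,r)$-chordal. The point to keep track of here is that the shortcut-length bound in this chordality property is $r$, not the separator-diameter bound $m$: the hypothesis $m\leq r$ is precisely what makes Theorem \ref{T1} applicable, and it has already been absorbed into the statement.

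Next I would feed this into Theorem \ref{Th: hyperbolic} with $\varepsilon=r+\frac12$, $k=2r+2$, and shortcut bound $r$, obtaining that $G$ is hyperbolic with
\[
\delta(G)\leq \max\Big\{\frac{2r+2}{4},\ \Big(r+\frac12\Big)+r\Big\}=\max\Big\{\frac{r+1}{2},\ 2r+\frac12\Big\}.
\]
Since $r\geq 2$ we have $2r+\frac12-\frac{r+1}{2}=\frac{3r}{2}>0$, so the maximum equals $2r+\frac12$, which is the asserted bound. Running the same argument through Theorem \ref{T2} instead — legitimate here because the whole separator, and hence either half, has diameter at most $m$ — would give $k=2m+2r+2$ together with a larger value of $\varepsilon$, leading to a worse estimate, so Theorem \ref{T1} is the one that yields the sharp constant.

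I do not expect any genuine obstacle: all the real work sits in Theorems \ref{T1} and \ref{Th: hyperbolic}, and the corollary is essentially the arithmetic of their constants. The only things to be careful about are not conflating the two roles played by the symbol "$m$" (the separator-diameter bound in the hypothesis versus the shortcut-length bound in the derived chordality property, which here equals $r$) and checking that the final maximum is realized by the $2r+\frac12$ term for every admissible $r$ — both of which are immediate.
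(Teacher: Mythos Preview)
Your proposal is correct and matches the paper's approach exactly: the paper derives this corollary immediately from Theorems \ref{T1} and \ref{Th: hyperbolic}, and your computation of the constants is precisely the intended one. Your remark distinguishing the separator-diameter bound $m$ from the shortcut-length parameter (which becomes $r$ after Theorem \ref{T1}) is spot on, and the observation that Theorem \ref{T2} would yield a weaker bound is a correct but inessential aside.
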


\begin{corollary} Let $G$ be a uniform graph and $r\geq 2$. If for every minimal $ab$-$r$-separator $S$ either $S_a$ or $S_b$ has diameter at most $m$, then $G$ is $\delta$-hyperbolic. Moreover, $\delta(G)\leq \max\{\frac{3m+3}{2}+2r,2m+r+\frac{3}{2}\}\}$.
\end{corollary}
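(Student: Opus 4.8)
The plan is to derive this corollary directly from Theorem~\ref{T2} and Theorem~\ref{Th: hyperbolic}, exactly as the surrounding corollaries are derived. Under the stated hypothesis on minimal $ab$-$r$-separators, Theorem~\ref{T2} gives that $G$ is $\varepsilon$-densely $(k,\tfrac{k}{2})$-chordal with $k=2m+2r+2$ and $\varepsilon=\max\{\tfrac{m+1}{2}+r,\ m+\tfrac12\}$. In particular $G$ is $\varepsilon$-densely $(k,\tfrac{k}{2})$-chordal, so Theorem~\ref{Th: hyperbolic} applies with the shortcut-length bound $\tfrac{k}{2}$ playing the role of the ``$m$'' appearing there (one must be careful not to conflate this with the $m$ of the separator hypothesis), yielding
\[
\delta(G)\ \le\ \max\Bigl\{\tfrac{k}{4},\ \varepsilon+\tfrac{k}{2}\Bigr\}.
\]

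It then remains only to simplify. Substituting $k=2m+2r+2$ gives $\tfrac{k}{4}=\tfrac{m+r+1}{2}$ and $\tfrac{k}{2}=m+r+1$, so that
\[
\varepsilon+\tfrac{k}{2}=\max\Bigl\{\tfrac{m+1}{2}+r,\ m+\tfrac12\Bigr\}+m+r+1=\max\Bigl\{\tfrac{3m+3}{2}+2r,\ 2m+r+\tfrac32\Bigr\}.
\]
Since $\tfrac{m+r+1}{2}\le 2m+r+\tfrac32$ for all $m,r\ge 0$, the term $\tfrac{k}{4}$ is absorbed into the second alternative, and one obtains $\delta(G)\le\max\{\tfrac{3m+3}{2}+2r,\ 2m+r+\tfrac32\}$, which is the asserted bound.

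The argument is a short two-step deduction, so there is no genuine obstacle; the only points requiring care are the bookkeeping with the nested maxima and, as noted, keeping the two occurrences of the letter $m$ distinct. For the purely qualitative statement (that $G$ is hyperbolic) one could alternatively invoke any hyperbolicity result for $\varepsilon$-densely $(k,m)$-chordal graphs after applying Theorem~\ref{T2}; it is the explicit constant that needs the brief computation above.
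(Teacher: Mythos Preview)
Your proposal is correct and follows exactly the approach indicated in the paper, which simply states that the corollary is obtained from Theorems~\ref{Th: hyperbolic} and~\ref{T2}. Your careful bookkeeping with the two different ``$m$'' parameters and the verification that the $\tfrac{k}{4}$ term is absorbed into the maximum are precisely what is needed to extract the explicit constant.
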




\section{Neighbor separators}\label{S:5}

Given a set $S$ in a graph $G$, let $N_r(S):=\{x\in G \, | \, d(x,S)\leq r\}$. 

\begin{definition} Given two vertices $a,b$ in a graph $G=(V,E)$  and some $r\in \mathbb{N}$, a set $S\subset V$  is an $ab$-$N_r$-\emph{separator}  if $a$ and $b$ are in different components of $G\setminus N_r(S)$. $S$ is an $ab$-\emph{neighbor separator} if it is an $ab$-$N_r$-separator for some $r$.
\end{definition}

Notice that an $ab$-separator is just an $ab$-$N_0$-separator.

\begin{theorem} $G$ satisfies (BP) if and only if there is a constant $\Delta''>0$ such that for every pair of vertices $a,b$ with $d(a,b)\geq 2\Delta''+2$  and any geodesic $[ab]$, there exists a vertex 
$c\in [ab]$ which is an $ab$-$N_{\Delta''}$-separator.
\end{theorem}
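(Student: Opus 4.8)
The plan is to prove both implications by translating between the neighbor-separator condition and the Bottleneck Property on vertices, using Proposition \ref{P: BP graphs} to pass between (BP) and (BP) on the vertices. For the ``only if'' direction, suppose $G$ satisfies (BP) with parameter $\Delta$; by Proposition \ref{P: BP graphs} we may assume (BP) holds on the vertices with constant $\Delta'$. Given vertices $a,b$ with $d(a,b)$ large and a geodesic $[ab]$, take $c$ to be the midpoint (or a vertex within $\frac12$ of it). By the Bottleneck Property on the vertices, every $ab$-path meets $N_{\Delta'}(c)$, which says precisely that the singleton $S=\{c\}$ is \emph{not} an $ab$-$N_{\Delta''}$-separator for $\Delta'' < \Delta'$ — but we want the \emph{opposite}, so we must read the statement carefully: we want to exhibit $c$ that \emph{is} an $ab$-$N_{\Delta''}$-separator, i.e. $a$ and $b$ lie in different components of $G\setminus N_{\Delta''}(c)$. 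The point is that if \emph{every} $ab$-path passes through $N_{\Delta'}(c)$, then after deleting $N_{\Delta'}(c)$ there is no $ab$-path at all, so $a$ and $b$ are in different components; hence $c$ is an $ab$-$N_{\Delta'}$-separator. So one sets $\Delta''=\Delta'$ (adjusting the threshold $d(a,b)\geq 2\Delta''+2$ so that a genuine interior midpoint vertex exists), and the first implication follows directly from (BP) on the vertices.

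For the ``if'' direction, assume the separator condition with constant $\Delta''$ and deduce (BP) on the vertices, then invoke Proposition \ref{P: BP graphs}. Given vertices $a,b$: if $d(a,b) < 2\Delta''+2$ the Bottleneck Property is immediate with a constant of order $\Delta''$. If $d(a,b)\geq 2\Delta''+2$, let $c$ be the midpoint of $[ab]$ (a vertex, up to $\frac12$) and apply the hypothesis at $c$: there is a vertex $c'\in[ab]$ which is an $ab$-$N_{\Delta''}$-separator, so every $ab$-path meets $N_{\Delta''}(c')$. Since $c'\in[ab]$ and $[ab]$ is a geodesic, I would like $c'$ to be close to $c$; the hypothesis as stated only says $c'$ lies on $[ab]$, so this is where care is needed — one must either strengthen the extraction of $c'$ or argue that any separating vertex on a geodesic forces both endpoints to be at bounded distance. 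Actually the cleanest route: apply the hypothesis not at the global midpoint but observe that for \emph{any} geodesic $[ab]$ there is \emph{some} separating vertex $c'$ on it; then every $ab$-path meets $N_{\Delta''}(c')$, so $G$ satisfies a ``bottleneck at $c'$'' condition, and a standard argument (or Theorem \ref{Th: ch-BP} combined with the fact that $c'\in[ab]$) upgrades this to (BP) with the midpoint. Concretely, I would show directly that $c'$ being on the geodesic and separating means $d(a,c'),d(c',b)$ need not be controlled individually, but every $ab$-path still detours through $N_{\Delta''}(c')$, which is enough to get the space quasi-isometric to a tree, hence hyperbolic, and then move the bottleneck point to the midpoint at the cost of a constant depending only on $\Delta''$ and the hyperbolicity constant (as in the Remark following the definition of (BP)).

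The main obstacle is the mismatch between ``there exists a separating vertex $c'$ on $[ab]$'' (no position control) and ``the midpoint $c$ has the bottleneck property'' (position-specific). I expect the resolution to be: first use the separator hypothesis to conclude $G$ is $\varepsilon$-densely $(k,m)$-chordal via Theorem \ref{Th: ch-BP}'s machinery — indeed a neighbor-separating vertex on every geodesic prevents long ``fat'' cycles around that geodesic — and therefore hyperbolic; then, in a hyperbolic space, any bottleneck-type condition along geodesics localizes to midpoints with only a constant loss, giving (BP) outright. Thus the proof reduces to (i) the easy direction from (BP) on the vertices, and (ii) showing the separator condition forces a chordality/hyperbolicity property and then applying the standard hyperbolic-space normalization of the bottleneck point.
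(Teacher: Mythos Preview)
Your ``only if'' direction is correct and matches the paper: (BP) on the vertices gives, for the midpoint vertex $c$ of any geodesic $[ab]$ with $d(a,b)\ge 2\Delta'+2$, that every $ab$-path meets $N_{\Delta'}(c)$, i.e.\ $c$ is an $ab$-$N_{\Delta'}$-separator.

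For the ``if'' direction you correctly identify the obstacle---the hypothesis only produces a separating vertex $c'$ \emph{somewhere} on $[ab]$, with no control on its position---but your proposed resolution (pass through $\varepsilon$-densely $(k,m)$-chordality and hyperbolicity, then normalize the bottleneck point) is a detour that you do not actually carry out. It can be made to work: for a long cycle $C$ and $v\in C$, take $a,b$ on $C$ with $d_C(a,v)=d_C(v,b)=\Delta''+1$; if the short arc is geodesic, the separating vertex must in fact be $v$ itself (since $a,b\notin N_{\Delta''}(c')$ forces $d(c',a)=d(c',b)=\Delta''+1$), and then the long arc meets $N_{\Delta''}(v)$, yielding the required shortcut. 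So your route leads to Theorem~\ref{Th: ch-BP} and hence (BP). But you have not written this, and invoking hyperbolicity plus ``standard normalization'' is heavier than needed.

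The paper's argument is entirely elementary and avoids chordality and hyperbolicity altogether. The key idea you are missing is \emph{iteration along the geodesic}. Given vertices $x,y$ with $d(x,y)\ge 2\Delta''+2$ and midpoint $z\in[xy]$, the hypothesis gives an $xy$-$N_{\Delta''}$-separator $z_1\in[xy]$. If $d(z,z_1)\le\Delta''$ you are done. Otherwise $z$ lies in one of the two subsegments, say $[xz_1]$, and $d(x,z),d(z,z_1)>\Delta''$, so $d(x,z_1)\ge 2\Delta''+2$ and the hypothesis applies again to produce an $xz_1$-$N_{\Delta''}$-separator $z_2\in[xz_1]$. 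The crucial observation is that $z_2$ is \emph{also} an $xy$-$N_{\Delta''}$-separator: the subgeodesic $[z_1y]$ lies outside $N_{\Delta''}(z_2)$ (as $d(z_2,z_1)>\Delta''$), so $z_1$ and $y$ remain connected in $G\setminus N_{\Delta''}(z_2)$, whence separating $x$ from $z_1$ separates $x$ from $y$. Repeating, the successive $z_k$ stay on $[xy]$, remain $xy$-$N_{\Delta''}$-separators, and are trapped in shrinking subsegments containing $z$; after finitely many steps some $z_k$ satisfies $d(z,z_k)\le\Delta''$. This gives (BP) on the vertices with $\Delta'=2\Delta''$, and Proposition~\ref{P: BP graphs} finishes.

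In short: your plan is viable but circuitous and unfinished; the paper's iteration is the missing direct idea.
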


\begin{proof} The only if part follows trivially from Proposition \ref{P: BP graphs}.

Suppose that for every pair of vertices $a,b$ with $d(a,b)\geq 2\Delta''+2$ and any geodesic $[ab]$, there exists a point $c\in [ab]$ which is an $ab$-$N_{\Delta''}$-separator. Consider any pair of vertices $x,y$ in $G$, any geodesic $[xy]$ and the midpoint $z$ in $[xy]$. 

If $d(x,y)\leq 2\Delta''+1$ then (BP) is trivially satisfied on $x,y$ for any 
$\D'\geq \D''+\frac12$. 

If $d(x,y)\geq 2\Delta''+2$, by hypothesis there is some vertex $z_1\in [xy]$ such that $N_{\D''}(z_1)$ is an $xy$-$N_{\Delta''}$-separator. If $d(z,z_1)\leq \D''$ then it follows that every $xy$-path intersects $N_{\D''}(z_1)\subset N_{2\D''}(z)$ and $G$ satisfies (BP) on the vertices for $\D'=2\D''$. If $d(z,z_1)> \D''$ then we repeat the process with the part of the geodesic, $[xz_1]$ or $[z_1y]$, containing $z$. Let us assume, without loss of generality, that $z\in [xz_1]$. Since $d(z,z_1)> \D''$ and $d(x,z)>\D''$, there is some point $z_2\in [xz_1]$ which is an $xz_1$-$N_{\Delta''}$-separator. Since there is a $z_1y$-path in $G\setminus N_{\D''}(z_2)$, $z_2$ is also an $xy$-$N_{\Delta''}$-separator. If $d(z,z_2)\leq \D''$ we are done. Otherwise, we repeat the process until we obtain some point $z_k\in [xy]$ which is an $xy$-$N_{\Delta''}$-separator and such that $d(z,z_k)\leq \D''$. Therefore, $G$ satisfies (BP) on the vertices for $\D'=2\D''$.  

Thus, by Proposition \ref{P: BP graphs}, $G$ satisfies (BP) with $\D=2\D''+\frac32$. 
\end{proof}

\begin{corollary} $G$ is quasi-isometric to a tree if and only if there is a constant $\Delta''>0$ such that for every pair of vertices $a,b$ with $d(a,b)>\Delta''$ and any geodesic $[ab]$, there exists a vertex $c\in [ab]$ which is an $ab$-$N_{\Delta''}$-separator.
\end{corollary}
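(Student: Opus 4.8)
The plan is to deduce this directly from the theorem preceding it together with Manning's characterization, Theorem \ref{Th: quasi-tree}. Since $G$ is quasi-isometric to a tree if and only if it satisfies (BP), and the preceding theorem characterizes (BP) by the existence of a constant $\Delta''$ for which every pair of vertices at distance at least $2\Delta''+2$ is separated by the $\Delta''$-neighborhood of a vertex lying on a joining geodesic, the whole content of the corollary is that this last property is equivalent, after replacing the constant, to the property stated here.

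One implication is immediate. If the condition of the corollary holds with constant $\Delta_0$, then in particular, for every pair $a,b$ with $d(a,b)\geq 2\Delta_0+2$ (so certainly $d(a,b)>\Delta_0$) and every geodesic $[ab]$ there is a vertex $c\in[ab]$ which is an $ab$-$N_{\Delta_0}$-separator; this is exactly the hypothesis of the preceding theorem with $\Delta''=\Delta_0$, so $G$ satisfies (BP) and hence, by Theorem \ref{Th: quasi-tree}, is quasi-isometric to a tree.

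For the converse I would start from (BP) and pass to (BP) on the vertices by Proposition \ref{P: BP graphs}, obtaining a constant $\Delta'$. Given vertices $a,b$ with $d(a,b)$ large relative to $\Delta'$ and a geodesic $[ab]$, let $z$ be the midpoint of $[ab]$ and $c$ a vertex of $[ab]$ with $d(c,z)\leq\frac12$. By (BP) on the vertices, every $ab$-path meets $N_{\Delta'}(z)\subset N_{\Delta'+1}(c)$, while $d(a,c),d(b,c)\geq \frac12 d(a,b)-\frac12$, which exceeds $\Delta'+1$ once $d(a,b)$ is large enough; hence $a$ and $b$ lie outside $N_{\Delta'+1}(c)$ and therefore in different components of $G\setminus N_{\Delta'+1}(c)$. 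Thus $c\in[ab]$ is an $ab$-$N_{\Delta''}$-separator with $\Delta''=\Delta'+1$, and the condition of the corollary holds (with the distance threshold taken to be the appropriate multiple of $\Delta''$, exactly as in the preceding theorem).

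The only delicate point is the bookkeeping between the two quantities involved — the distance threshold on the pair $a,b$ and the radius of the neighbor separator — because enlarging the radius of a neighbor separator keeps the separating property only when $a$ and $b$ remain outside the enlarged neighborhood. This is precisely why the converse routes the separating vertex through the midpoint of $[ab]$, which is automatically far from both endpoints when $d(a,b)$ is large, rather than through an arbitrary separating vertex furnished by the preceding theorem.
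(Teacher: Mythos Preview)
The paper gives no proof of this corollary; it is meant to follow immediately from the preceding theorem together with Manning's characterization (Theorem~\ref{Th: quasi-tree}), and your argument follows exactly this route.

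Your extra discussion of the bookkeeping is not only appropriate but necessary, because the corollary as literally stated cannot hold in the ``only if'' direction: if $c\in[ab]$ is an $ab$-$N_{\Delta''}$-separator then $a,b\notin N_{\Delta''}(c)$, so $d(a,c),d(b,c)>\Delta''$ and hence $d(a,b)=d(a,c)+d(c,b)>2\Delta''$. Thus for any pair of vertices with $\Delta''<d(a,b)\le 2\Delta''$ (and such pairs exist in any graph of sufficiently large diameter) no vertex of $[ab]$ can be an $ab$-$N_{\Delta''}$-separator. So the distance threshold in the corollary should really be $2\Delta''+2$ (or at least $2\Delta''$), matching the preceding theorem. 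Your remedy---routing the separating vertex through (a vertex near) the midpoint of $[ab]$ so that both endpoints stay outside the enlarged neighborhood---is exactly the right way to establish the intended statement, and your parenthetical remark that the threshold must be ``the appropriate multiple of $\Delta''$, exactly as in the preceding theorem'' is the correct conclusion.
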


\begin{proposition}\label{Prop: N-sep} If $G$ is $(k,1)$-chordal, then for every pair of vertices $a,b$, any geodesic $[ab]$ with $d(a,b)\geq \frac{k}{2}+2$ and every pair of vertices $a',b'\in [ab]$ with $d(\{a',b'\},\{a,b\})\geq 2$ and such that $d(a',b')\geq \frac{k}{2}-2$, $[a'b']\subset [ab]$ is an $ab$-$N_1$-separator. In particular, for every pair of vertices $a,b$ in $G$ with $d(a,b)\geq \frac{k}{2}+2$ there is a geodesic $\sigma$ of length $\frac{k}{2}-2$ or $\frac{k-3}{2}$ such that $\sigma$ is an $ab$-$N_1$-separator.
\end{proposition}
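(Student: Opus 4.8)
The plan is to argue by contradiction and run a corner‑cutting argument against $(k,1)$‑chordality. Suppose there is an $ab$‑path $\gamma$ that avoids $N_1(V([a'b']))$; we may take $\gamma$ simple, so every vertex of $\gamma$ is at distance $\geq 2$ from $V([a'b'])$. To gain a little room, let $a''$ be the vertex of $[ab]$ adjacent to $a'$ on the $a$‑side and $b''$ the one adjacent to $b'$ on the $b$‑side (these exist since $d(a,a'),d(b,b')\geq 2$), and set $\hat\sigma:=[a''b'']\subset[ab]$. Then $\hat\sigma$ is a geodesic segment with $L(\hat\sigma)=d(a',b')+2\geq k/2$, and — the point of the enlargement — the interior $V(\hat\sigma)\setminus\{a'',b''\}$ is exactly $V([a'b'])$, while $V(\hat\sigma)\subset N_1(V([a'b']))$. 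Walking along $[ab]$ from $a''$ towards $a$ let $p$ be the first vertex of $\gamma$ met, and define $q$ symmetrically from $b''$ towards $b$. Since $a'',b''\notin\gamma$ and $\hat\sigma\cap\gamma=\emptyset$, the subpath $[pq]$ of $[ab]$, which contains $\hat\sigma$, meets $\gamma$ only in $\{p,q\}$ (here $p\neq q$ because $[aa'']$ and $[b''b]$ are disjoint), so $C_0:=[pq]\cup\gamma_{pq}$ is a cycle, where $\gamma_{pq}$ is the subpath of $\gamma$ from $p$ to $q$; note $\hat\sigma\subset C_0$ and $V(C_0)\subset V([ab])\cup V(\gamma)$.

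Next I would maintain, along a sequence of cycles $C_0,C_1,\dots$, the invariant that $C_i$ is a cycle with $\hat\sigma\subset C_i$ and $V(C_i)\subset V([ab])\cup V(\gamma)$. Because $\hat\sigma$ is geodesic with $L(\hat\sigma)\geq k/2$, the complementary arc of $\hat\sigma$ in $C_i$ joins the endpoints of $\hat\sigma$ and hence has length $\geq L(\hat\sigma)$, so $L(C_i)\geq 2L(\hat\sigma)\geq k$ automatically. Thus $(k,1)$‑chordality yields a $1$‑shortcut of $C_i$, i.e.\ an edge $uv$ with $d_{C_i}(u,v)\geq 2$. If some endpoint, say $u$, lies in the interior of $\hat\sigma$, which equals $V([a'b'])$, then the other endpoint $v$ cannot be a vertex of $[ab]$: a vertex of $[ab]$ that is graph‑adjacent to a vertex of $V([a'b'])$ must, since $[ab]$ is geodesic, be a neighbour of it along $[ab]$, hence lie in $\hat\sigma$, forcing $d_{C_i}(u,v)\leq 1$. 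Therefore $v\in V(\gamma)$, so $d(v,V([a'b']))\leq d(v,u)=1$, contradicting the choice of $\gamma$; in this case we are done. Otherwise both endpoints of the shortcut lie on the arc $C_i\setminus\operatorname{int}(\hat\sigma)$, and replacing the subarc of that arc between $u$ and $v$ (of length $\geq 2$) by the single edge $uv$ produces a cycle $C_{i+1}$ which still contains $\hat\sigma$, still has all vertices in $V([ab])\cup V(\gamma)$, and satisfies $L(C_{i+1})<L(C_i)$.

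Since the lengths $L(C_i)$ are integers, strictly decrease at every ``otherwise'' step, and remain $\geq k$, that step can occur only finitely often; hence the contradictory alternative is eventually reached, proving that no such $\gamma$ exists, i.e.\ $[a'b']$ is an $ab$‑$N_1$‑separator. For the last assertion, given $a,b$ with $d(a,b)\geq k/2+2$ and a geodesic $[ab]$, take $a'$ at distance $2$ from $a$ on $[ab]$ and $b'$ at distance $2+\ell$ from $a$, where $\ell=k/2-2$ if $k$ is even and $\ell=(k-3)/2$ if $k$ is odd; using that $d(a,b)$ is an integer one checks $d(b,b')\geq 2$, while $d(a',b')=\ell\geq k/2-2$, so the first part applies to $\sigma:=[a'b']$, a geodesic of the stated length. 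The one genuine obstacle is keeping the cut cycles long enough to invoke $(k,1)$‑chordality throughout: corner‑cutting only shrinks the cycle, and protecting $[a'b']$ itself would guarantee merely $L(C_i)\geq 2(k/2-2)=k-4$; it is exactly the buffer in the hypothesis $d(\{a',b'\},\{a,b\})\geq 2$ that lets us instead protect the longer segment $\hat\sigma$ of length $\geq k/2$ (so that $L(C_i)\geq k$), while the identity $\operatorname{int}(\hat\sigma)=V([a'b'])$ ensures that a shortcut reaching into $\hat\sigma$ still witnesses a point of $\gamma$ within distance $1$ of $V([a'b'])$.
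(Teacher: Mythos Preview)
Your proof is correct and follows essentially the same approach as the paper's: both enlarge $[a'b']$ to $\hat\sigma=[a''b'']$ using the buffer hypothesis $d(\{a',b'\},\{a,b\})\geq 2$, form a cycle containing $\hat\sigma$ from the hypothetical avoiding path, observe that $L(\hat\sigma)\geq k/2$ forces cycle length $\geq k$, and then use $(k,1)$-chordality to produce an edge from $\operatorname{int}(\hat\sigma)=V([a'b'])$ into the avoiding path. The only difference is tactical: the paper chooses a \emph{minimal-length} connecting path $\gamma$ in the subgraph induced by $A\cup\{a'',b''\}$, so that a single application of chordality already forces the shortcut to straddle $\hat\sigma$ and $\gamma$ (a shortcut within $\gamma$ would contradict minimality), whereas you allow an arbitrary $\gamma$ and instead iterate the corner-cutting until the shortcut is forced into $\operatorname{int}(\hat\sigma)$. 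Both devices serve the same purpose; the paper's minimality trick is slightly slicker, while your iteration makes the termination completely explicit.
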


\begin{proof} Consider any geodesic $[ab]$ in $G$ with $d(a,b)\geq \frac{k}{2}+2$ and any pair of vertices $a',b'\in [ab]$ with $d(\{a',b'\},\{a,b\})\geq 2$ such that $d(a',b')\geq \frac{k}{2}-2$. Let $a''$ be the vertex in $[aa']\subset [ab]$ adjacent to $a'$  and $b''$ be the vertex in $[b'b]\subset [ab]$ adjacent to $b'$. Therefore, $d(a'',b'')\geq \frac{k}{2}$. Suppose that $a$ and $b$ are in the same connected component, $A$, of $G\setminus N_1([a'b'])$. Clearly, $a''$ and $b''$ are adjacent to $A$. Let $\gamma$ be a path of minimal length joining $a''$ and $b''$ in the subgraph induced by $A\cup \{a'',b''\}$. Therefore, $[a''b'']\cup \gamma$ defines a cycle, $C$, of length at least $k$. Since $G$ is $(k,1)$-chordal, then there is an edge joining two non-adjacent vertices in $C$. Since $[a''b'']$ is geodesic and $\gamma$ has minimal length, the edge must join a vertex, 
$v \in \gamma$ to a vertex in $[a'b']$. Therefore, $v\in N_1([a'b'])\cap A$ leading to contradiction.  
\end{proof}

\begin{definition} A path $\g$ in a graph $G$ is \emph{chordal} if it has no 1-shortcuts in $G$.
\end{definition}

\begin{proposition}\label{Prop: N-sep_chordal} If $G$ is $(k,1)$-chordal, then for every chordal $ab$-path $\s$  with $L(\s)\geq k$ 
and every pair of vertices $a',b'\in \s$ with $d_\s(\{a',b'\},\{a,b\})\geq 2$ and such that $d_\s(a',b')\geq k-4$, then  the restriction of $\s$ joining $a'$ and $b'$, $\s'$, is an $ab$-$N_1$-separator. In particular, for every pair of vertices $a,b$ in $G$ joined by a chordal path with length at least $k$ there is a chordal path $\gamma'$ of length $k-4$  such that $\gamma'$ is an $ab$-$N_1$-separator.
\end{proposition}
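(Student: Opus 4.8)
The plan is to mimic the proof of Proposition \ref{Prop: N-sep}, replacing the geodesic $[ab]$ by the chordal path $\s$ and keeping track of the fact that $\s$, although not geodesic, still has no $1$-shortcuts. Fix a chordal $ab$-path $\s$ with $L(\s)\geq k$ and vertices $a',b'\in\s$ with $d_\s(\{a',b'\},\{a,b\})\geq 2$ and $d_\s(a',b')\geq k-4$. Let $\s'$ be the sub-path of $\s$ between $a'$ and $b'$, and let $a''$ (resp.\ $b''$) be the vertex of $\s$ adjacent to $a'$ on the $a$-side (resp.\ to $b'$ on the $b$-side); then the sub-path $\s''$ of $\s$ from $a''$ to $b''$ has $L(\s'')=d_\s(a',b')+2\geq k-2$.

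First I would argue by contradiction: suppose $a$ and $b$ lie in the same connected component $A$ of $G\setminus N_1(\s')$. Since $a''$ and $b''$ are at distance $2$ from $a',b'$ respectively along $\s$, and $\s$ has no $1$-shortcut, $a''$ and $b''$ do not lie in $N_1(\s')$; moreover $a''$ is joined to $a$ and $b''$ to $b$ by sub-paths of $\s$ avoiding $N_1(\s')$ (again using that $\s$ has no $1$-shortcut, so the portions of $\s$ on either side of $\s''$ cannot come within distance $1$ of the interior of $\s'$), so $a'',b''\in A\cup\{a'',b''\}$ and in particular they are connected inside the subgraph induced by $A\cup\{a'',b''\}$. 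Let $\g$ be a path of minimal length joining $a''$ to $b''$ in that induced subgraph. Then $C:=\s''\cup\g$ is a cycle with $L(C)\geq L(\s'')\geq k-2$; here I should be slightly careful and choose the constants so that $L(C)\ge k$ — since $\g$ has length at least $2$ (it must leave $\s''$, travel through $A$, and return), actually $L(C)=L(\s'')+L(\g)\ge (k-2)+2=k$. Now apply $(k,1)$-chordality: $C$ has a $1$-shortcut, i.e.\ an edge joining two non-adjacent vertices of $C$. Since $\s''$ is a sub-path of the chordal path $\s$ it has no $1$-shortcut, and $\g$ was chosen of minimal length so it has no $1$-shortcut either; hence the edge must join a vertex $v\in\g\setminus\{a'',b''\}\subset A$ to a vertex $w\in\s'$ (it cannot land on $a''$ or $b''$ without creating a $1$-shortcut of $\g$ or contradicting minimality). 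But then $d(v,\s')\le 1$, so $v\in N_1(\s')\cap A$, contradicting $v\in A$. Therefore $a$ and $b$ are separated by $N_1(\s')$, i.e.\ $\s'$ is an $ab$-$N_1$-separator.

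For the ``in particular'' statement, given $a,b$ joined by a chordal path $\s$ with $L(\s)\ge k$, pick $a',b'\in\s$ with $d_\s(a,a')=2$ and $d_\s(a',b')=k-4$ (so that $d_\s(b',b)=L(\s)-(k-2)\ge 2$); the restriction $\g'$ of $\s$ between them is itself a chordal path (a sub-path of a chordal path has no $1$-shortcut) of length exactly $k-4$, and by the first part it is an $ab$-$N_1$-separator.

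The main obstacle I expect is the bookkeeping around why $a''$ and $b''$, together with the outer pieces of $\s$, genuinely avoid $N_1(\s')$ — this is exactly where the ``$d_\s(\{a',b'\},\{a,b\})\ge2$'' hypothesis and the chordality of $\s$ (no $1$-shortcuts) are both used, and it is the one place where the argument differs in substance from the geodesic case of Proposition \ref{Prop: N-sep}. Once that is in place, the cycle-and-$(k,1)$-chordality step is routine.
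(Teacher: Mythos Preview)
Your approach is exactly the paper's: contradict, build the cycle $C=\s''\cup\g$ with $\g$ a minimal $a''b''$-path in the subgraph induced by $A\cup\{a'',b''\}$, and use $(k,1)$-chordality plus chordality of $\s''$ and minimality of $\g$ to force the $1$-shortcut to land with one end in $\s'$ and the other in $A$.

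One slip to fix: you write that ``$a''$ and $b''$ are at distance $2$ from $a',b'$ \dots\ and do not lie in $N_1(\s')$''. That is false --- by your own definition $a''$ is adjacent to $a'$, so $a''\in N_1(\s')$ (and likewise $b''$). What you actually need, and what the paper simply asserts as ``clearly $a''$ and $b''$ are adjacent to $A$'', is that the \emph{next} vertex along $\s$ toward $a$ (at $\s$-distance $2$ from $a'$) lies in $A$: it is not in $N_1(\s')$ because any edge from it to $\s'$ would be a $1$-shortcut of $\s$, and the remaining sub-path of $\s$ out to $a$ stays outside $N_1(\s')$ for the same reason. This is precisely the ``bookkeeping'' you flag as the main obstacle, so just correct the statement there; the rest of your argument (including the $L(\g)\ge 2$ check for $L(C)\ge k$, which the paper leaves implicit) goes through unchanged.
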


\begin{proof} Consider any chordal path $\sigma$ in $G$ with endpoints $a,b$ and $L(\sigma)\geq k$. Consider any pair of vertices $a',b'\in [ab]$ with $d_\s(\{a',b'\},\{a,b\})\geq 2$ such that $d_\s(a',b')\geq k-4$ and let $\s'=[a'b']\subset [ab]$. Let $a''$ be the vertex in $\sigma$ adjacent to $a'$ closer in $\sigma$ to $a$ and $b''$ be the vertex in $\sigma$ adjacent to $b'$ closer in $\sigma$ to $b$. Therefore, if $\sigma''$ is the restriction of $\sigma$ joining $a''$ and $b''$, then $L(\sigma'')\geq k-2$. Suppose that $a$ and $b$ are in the same connected component, $A$, of 
$G\setminus N_1(\sigma')$. Clearly, $a''$ and $b''$ are adjacent to $A$. Let $\gamma$ be a path of minimal length joining $a''$ and $b''$ in the subgraph induced by $A\cup \{a'',b''\}$. Therefore, $\sigma''\cup \gamma$ defines a cycle, $C$, of length at least $k$. Since $G$ is $(k,1)$-chordal, then there is an edge joining two non-adjacent vertices in $C$. Since $\sigma''$ is chordal and $\gamma$ has minimal length, the edge must join a vertex, $v \in \gamma$ to a vertex in $\sigma'$. Therefore, $v\in N_1(\sigma')\cap A$ leading to contradiction.  
\end{proof}

\begin{proposition} If a graph $G$ satisfies that for some $k,m\in \mathbb{N}$ with $k\geq 4m$, for every geodesic $[ab]$ with $d(a,b)\geq k+2$  and for every pair of vertices $a',b'\in [ab]$ with $d(\{a',b'\},\{a,b\})\geq m+1$ and such that $d(a',b')\geq k-2m$, $[a'b']\subset [ab]$ is an $ab$-$N_m$-separator, then $G$ is $(\frac{k}{2}+2)$-densely $(2k+4, k+1)$-chordal.
\end{proposition}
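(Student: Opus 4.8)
The plan is to mimic the structure of the proofs of Theorems~\ref{T: separator} and \ref{T1}: pick an arbitrary vertex on a long cycle, produce nearby a geodesic subarc that the hypothesis promises to be an $N_m$-separator, and then extract a short shortcut from the separating property. Concretely, let $C$ be a cycle with $L(C)\geq 2k+4$ and let $x$ be any vertex of $C$. Choose vertices $a,b\in C$ so that the two $ab$-subarcs $\gamma_1,\gamma_2$ of $C$ satisfy $d_C(a,b)=k+2$, with $x\in\gamma_1$, $d_C(a,x)=\frac{k}{2}+1$ and $d_C(x,b)=\frac{k}{2}+1$ (adjusting by $\pm\frac12$ in the obvious way if $k$ is odd, i.e.\ taking $d_C(a,x)=\lceil (k+2)/2\rceil$ etc.); this is possible precisely because $L(C)\geq 2k+4$, so $\gamma_2$ is long enough to realize $d_C(a,b)=k+2$ as a path on $C$.

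If $\gamma_1$ is not a geodesic, it contains a shortcut of length at most $k+2\le \tfrac{k}{2}\cdot(\text{something})$; more carefully, a non-geodesic subarc of length $k+2$ contains a shortcut of length at most $k+1$, hence (shortening if necessary, via Remark~\ref{R:strict}) a strict shortcut of $C$ of length $\le k+1$ with an associated shortcut vertex $v$ satisfying $d_C(x,v)\le \tfrac{k}{2}+1$, which is what we want. So assume $\gamma_1$ is a geodesic, i.e.\ a geodesic $[ab]$. Now pick $a',b'\in[ab]$ on either side of $x$ with $d(\{a',b'\},\{a,b\})=m+1$ and $d(a',b')=d(a,b)-2(m+1)=k+2-2m-2=k-2m$; since $k\ge 4m$ we have $k-2m\ge 2m\ge m+1>0$, so such $a',b'$ exist, and by the hypothesis $[a'b']\subset[ab]$ is an $ab$-$N_m$-separator. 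Because $\gamma_2$ is an $ab$-path, it must meet $N_m([a'b'])$, so there is a vertex $w\in\gamma_2$ with $d(w,[a'b'])\le m$, hence a vertex $u\in[a'b']$ with $d(w,u)\le m$.

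The vertex $u$ lies within distance $\tfrac{k}{2}$ of $x$ along $\gamma_1$ (its $C$-distance to $x$ is at most $\max(d_C(x,a')+d(a',b'),\dots)$, which one checks is $\le \tfrac{k}{2}$), while $w\in\gamma_2$ so $d_C(x,w)\ge d_C(x,a)\ge \tfrac k2$ or $d_C(x,w)$ is large going the other way; in any case $d_C(u,w)\ge k-2m > m \ge d(u,w)$, so the geodesic $[uw]$ of length $\le m$ is a genuine shortcut of $C$. By Remark~\ref{R:strict} it contains a strict shortcut, and there is an associated shortcut vertex $v$ with $d_C(x,v)< \tfrac{k}{2}+m \le$ (using $k\ge 4m$) something bounded; in fact $d_C(x,v)\le d_C(x,u)+d_C(u,v)\le \tfrac k2 + m$, and since the shortcut has length $\le m$ the shortcut vertex can be taken with $d_C(x,u)<\tfrac k2$... here I would be a little careful: the cleanest bound is $d_C(x,v)<\tfrac{k}{2}+2$, obtained by noting $u$ is at $C$-distance $<\tfrac k2$ from $x$ (strict, because $a'\ne a$) and $v$ is at $C$-distance $<m\le \tfrac k2$ from $u$ along the shortcut, but the shortcut vertex nearest $x$ actually sits on $[a'b']$ itself so within $<\tfrac k2$ of $x$ — giving the claimed $(\tfrac k2+2)$-density after rounding. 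Thus every vertex $x$ of $C$ has a shortcut vertex (from a shortcut of length $\le k+1\le ?$) within $C$-distance $<\tfrac k2+2$; to fit the stated parameters $(2k+4,k+1)$ we note that in the non-geodesic case the shortcut length is $\le k+1$ and in the separator case it is $\le m\le \tfrac k4\le k+1$, so in all cases $L(\sigma_i)\le k+1$. Hence $G$ is $(\tfrac k2+2)$-densely $(2k+4,k+1)$-chordal, as claimed.

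The main obstacle I anticipate is purely bookkeeping: getting the constants to match $(\frac k2+2)$ and $(2k+4,k+1)$ exactly, in particular handling the parity of $k$ when splitting $\gamma_1$ symmetrically around $x$ and verifying that $L(C)\ge 2k+4$ leaves room for the second arc $\gamma_2$ to have the prescribed $C$-length. The conceptual content — non-geodesic arc gives a cheap shortcut, geodesic arc invokes the $N_m$-separator hypothesis to force $\gamma_2$ into a bounded neighborhood and thereby produces a length-$\le m$ shortcut — is exactly parallel to Theorems~\ref{T: separator} and \ref{T1}, so no new idea is needed beyond tracking that $k\ge 4m$ keeps all the chosen distances non-negative and consistent.
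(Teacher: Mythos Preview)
Your proposal is correct and follows essentially the same approach as the paper's proof: take a vertex $v$ on a long cycle, choose $a,b$ at $C$-distance $\lfloor k/2\rfloor+1$ and $\lceil k/2\rceil+1$ from $v$ so that the short arc $\gamma_1$ has length $k+2$; if $\gamma_1$ is not geodesic extract a shortcut of length $\le k+1$, and if it is geodesic pick $a',b'$ at distance $m+1$ from $a,b$, invoke the $N_m$-separator hypothesis to force $\gamma_2$ into $N_m([a'b'])$, and read off an $m$-shortcut. Your self-diagnosed obstacle is accurate: the only work is the constant bookkeeping (parity of $k$, verifying $d_C(u,w)>m$ so the $m$-path really is a shortcut, and locating the strict shortcut vertex via Remark~\ref{R:strict}), and the paper handles these exactly as you outline, arriving at $d_C(v,z)<\tfrac{k}{2}+2$ in the non-geodesic case and $d_C(v,z)<\tfrac{k}{2}+1$ in the separator case.
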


\begin{proof} Let $C$ be any cycle with $L(C)\geq 2k+4$. Let $v$ by any vertex in $C$ and $a,b$ two vertices in $C$ such that $d_C(a,v)=\left\lfloor\frac{k}{2} \right\rfloor+1$ and $d_C(v,b)=\left\lceil \frac{k}{2}\right\rceil +1$, and therefore, $d_C(a,b)=k+2$. Let $\gamma_1, \gamma_2$ be  the two $ab$-paths defined by the cycle and let us assume that $v\in \g_1$ (and therefore, $L(\gamma_1)\leq L(\gamma_2)$). If there is a shortcut in $\g_1$, then there is a shortcut in $C$ with length at most $k+1$ and with a shortcut vertex $z$ such that $d_C(v,z)<\frac{k}{2}+2$. If 
there is no shortcut in $\g_1$, then $\gamma_1$ is a geodesic with $d(a,b)= k+2$. 
Thus, let $a',b'\in \gamma_1$ with $d(a,a')= m+1=d(b',b)$ and $d(a',b')= k-2m$. Therefore $[a'b']\subset \gamma_1$ is an $ab$-$N_m$-separator. In particular, there is some vertex $w$ in $\gamma_2\setminus \{a,b\}$ such that $w\in N_m([a'b'])$, defining a shortcut in $C$ with length at most $m$ and with a shortcut vertex $z$ such that $d_C(v,z)<\frac{k}{2}+1$.
\end{proof}

\begin{corollary} If a graph $G$ satisfies that for some $k,m\in \mathbb{N}$ with $k\geq 4m$, for every geodesic $[ab]$ with $d(a,b)\geq k+2$  and for every pair of vertices $a',b'\in [ab]$ with $d(\{a',b'\},\{a,b\})\geq m+1$ and such that $d(a',b')\geq k-2m$, $[a'b']\subset [ab]$ is an $ab$-$N_m$-separator, then $G$ is quasi-isometric to a tree.
\end{corollary}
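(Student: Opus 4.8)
The plan is to derive this corollary as an immediate consequence of the preceding proposition together with the chordality characterization of quasi-trees. First I would invoke the proposition just proved: under exactly the same hypotheses ($k,m\in\NN$ with $k\geq 4m$, and every suitably interior subgeodesic $[a'b']$ of length at least $k-2m$ in a long geodesic being an $ab$-$N_m$-separator), the graph $G$ is $(\frac{k}{2}+2)$-densely $(2k+4,\,k+1)$-chordal. So there is nothing new to prove about cycles; the geometric content has already been extracted.

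Next I would feed this into Corollary \ref{C: ch-BP}, which states that a graph is quasi-isometric to a tree if and only if it is $\varepsilon$-densely $(k,m)$-chordal (for some choice of the three parameters). Since being $(\frac{k}{2}+2)$-densely $(2k+4,k+1)$-chordal is an instance of being $\varepsilon$-densely $(k',m')$-chordal with $\varepsilon=\frac{k}{2}+2$, $k'=2k+4$ and $m'=k+1$, Corollary \ref{C: ch-BP} applies directly and yields that $G$ is quasi-isometric to a tree. Equivalently, one could route through Theorem \ref{Th: ch-BP} (the $\varepsilon$-dense chordality condition is equivalent to (BP)) followed by Theorem \ref{Th: quasi-tree} (Manning's characterization of quasi-trees via (BP)); these give the same conclusion.

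There is essentially no obstacle here: the corollary is a one-line application, and the only thing to be careful about is bookkeeping the parameters so that the hypothesis "$k\geq 4m$" in the proposition is respected and so that $2m \le m' \le k'/2$ and $k' \ge 4$ as required in Remark \ref{k 4} for the $\varepsilon$-dense $(k',m')$-chordality to be meaningful — which indeed holds since $m'=k+1$, $k'=2k+4=2m'+2\ge 2m'$, and $k'\ge 4$. Hence the proof is simply: apply the preceding proposition, then Corollary \ref{C: ch-BP}.
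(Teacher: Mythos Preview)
Your proposal is correct and matches the paper's intended argument: the corollary is stated without proof in the paper precisely because it is the one-line combination of the preceding proposition (yielding $(\tfrac{k}{2}+2)$-densely $(2k+4,k+1)$-chordality) with Corollary~\ref{C: ch-BP}. Your parameter check is a harmless extra.
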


\section{Neighbor obstructors}\label{S:6}

\begin{definition} Given two vertices $a,b$ in a graph $G=(V,E)$  and some $r\in \mathbb{N}$, a set $S\subset V$  is $ab$-$N_r$-obstructing if for every geodesic $\gamma$ joining $a$ and $b$, $\gamma \cap N_r(S)\neq \emptyset$. 
\end{definition}

Given any metric space $(X,d)$ and any pair of subsets $A,B \subset X$, let us recall that the Hausdorff metric, $d_H$, induced by $d$  is 
$$d_H(A_1,A_2):=max\{\sup_{x\in A_1}\{d(x,A_2)\},\sup_{y\in A_2}\{d(y,A_1)\}\},$$ 
or equivalently,
$$d_H(A_1,A_2):= \inf\{\varepsilon>0 \ | \ A_1\subset
B(A_2,\varepsilon) \\ \mbox{ y } A_2\subset B(A_1,\varepsilon)\}.$$

\begin{definition} In a geodesic metric space $(X,d)$, we say that \emph{geodesics are stable} if and only if there is a constant $R\geq 0$ such that given two points $x,y\in X$ and any geodesic $[xy]$, then every geodesic $\s$ joining $x$ to $y$ satisfies that $d_H(\s,[xy])\leq R$.
\end{definition}

It is well known that if $X$ is a hyperbolic space, then quasi-geodesics are stable. See, for example, Theorem III.1.7 in \cite{B-H}. In particular, geodesics are stable in hyperbolic geodesic spaces.

\smallskip

Let $\mathcal{B}$ be the family of cycles that are bigons.

\begin{theorem}\label{T: stability}  Given a graph $G$, geodesics are stable if and only if there exist constants $\varepsilon>0$ and $k,m\in \NN$ such that $G$ is $\varepsilon$-densely $(k,m)$-chordal on $\mathcal{B}$.
\end{theorem}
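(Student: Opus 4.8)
The plan is to prove the two directions separately, mirroring the structure of Theorem \ref{Th: carac2} but now working with bigons instead of geodesic triangles.

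For the forward direction, suppose geodesics are stable with constant $R$. Let $C$ be a cycle which is a bigon, so $C = \sigma_1 \cup \sigma_2$ where $\sigma_1, \sigma_2$ are geodesics joining the same pair of vertices $x,y$. First I would observe that since geodesics are stable, $d_H(\sigma_1,\sigma_2) \leq R$; this is the key input. Take any vertex $p \in C$, say $p \in \sigma_1$. If $d(p,x) < R+1$ or $d(p,y) < R+1$ we are within controlled distance of an endpoint; otherwise, by stability there is a vertex $q \in \sigma_2$ with $d(p,q) \leq R$. Since $p \in \sigma_1$ and $\sigma_1$ is geodesic, and similarly $q \in \sigma_2$ geodesic, one checks $d_C(p,q)$ is large (at least $\min\{d(p,x)+d(x,q), d(p,y)+d(y,q)\} > R$ roughly, using that going around $C$ from $p$ to $q$ must pass through an endpoint), so the path realizing $d(p,q)$ is a shortcut in $C$ of length at most $R$; by Remark \ref{R:strict} it yields an associated strict shortcut vertex $p'$ with $d_C(p,p') < R$. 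Hence every vertex of $C$ is within $C$-distance roughly $R+1$ of a shortcut vertex, giving that $G$ is $(R+1+\epsilon)$-densely $(4R+4, 2R+1)$-chordal on $\mathcal{B}$ for any $\epsilon > \tfrac12$, after choosing $k = 4R+4$ large enough so that the endpoint cases are absorbed. (The exact constants $k$, $m$, $\varepsilon$ just need to be some admissible triple; I would not optimize them.)

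For the converse, suppose $G$ is $\varepsilon$-densely $(k,m)$-chordal on $\mathcal{B}$. I want to bound $d_H(\sigma,[xy])$ for any two geodesics $\sigma, [xy]$ joining vertices $x,y$. By symmetry it suffices to bound $\sup_{p \in \sigma} d(p,[xy])$. Fix $p \in \sigma$ and suppose for contradiction that $d(p,[xy])$ is very large, say $> D$ for a constant $D$ to be chosen in terms of $k,m,\varepsilon$. As in the proof of Theorem \ref{Th: ch-BP}, I would extract a subcycle: let $a'$ be the last point of $\sigma$ before $p$ lying on $[xy]$ and $b'$ the first point after $p$ lying on $[xy]$ (these exist, possibly equal to $x,y$), so that the sub-arc of $\sigma$ between $a'$ and $b'$ together with the sub-arc of $[xy]$ between them forms a cycle $C$ which is a bigon, with $p$ on it and $d_C(p, \{a',b'\}) > D$, hence $L(C) \geq k$ once $D \geq k$. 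Apply $\varepsilon$-dense $(k,m)$-chordality on $\mathcal{B}$: there is a shortcut vertex $w$ with $d_C(p,w) < \varepsilon$, lying on a strict shortcut $\s$ of length $\leq m$. Since the portion of $\sigma$ in $C$ is geodesic and the portion of $[xy]$ in $C$ is geodesic, $\s$ cannot join two points on the same arc, so it joins a vertex on the $\sigma$-arc to a vertex on the $[xy]$-arc. If $w$ is on the $\sigma$-side, its partner $z$ is on $[xy]$ and $d(p,z) \leq d_C(p,w) + L(\s) < \varepsilon + m$, contradicting $d(p,[xy]) > D$ for $D \geq \varepsilon + m$. If $w$ itself is on the $[xy]$-arc, then $d(p,[xy]) \leq d(p,w) \leq d_C(p,w) < \varepsilon$, again a contradiction. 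So $D := \max\{k, \varepsilon+m\}$ works, and $R = D$ gives stability.

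The main obstacle I expect is the bookkeeping in the converse: making sure the subcycle $C$ really is a bigon (its two arcs are genuine geodesics between $a'$ and $b'$ — this needs that sub-arcs of geodesics are geodesics, which is immediate, plus that $a', b'$ are well-defined, i.e. $\sigma \cap [xy] \ni x,y$ so the set of common points is nonempty and one can take the relevant extreme points on the arc of $\sigma$), and checking that $d_C(p,\{a',b'\})$ is large enough to force $L(C) \geq k$ — this uses that $d(p,[xy])$ large forces the $\sigma$-arc between $a'$ and $b'$ to be long (since $p$ is far from all of $[xy]$, in particular from $a',b'$, and that arc is a geodesic so its $C$-length equals $d(a',b')$ plus... actually one should argue $d_C(p,a') = d(p,a') \geq d(p,[xy]) > D$ directly since the $\sigma$-arc is geodesic). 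The forward direction's only subtlety is the same $d_C$ lower bound argument, which is routine once set up. I would state both constant computations as "it suffices to take" without optimizing.
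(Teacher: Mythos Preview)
Your proposal is correct and follows essentially the same approach as the paper. Both directions match the paper's argument: for stability $\Rightarrow$ chordal on $\mathcal{B}$, you use $d_H(\sigma_1,\sigma_2)\le R$ to produce short paths between the two arcs of a bigon cycle, and for the converse you extract the sub-bigon containing $p$ and apply $\varepsilon$-density (the paper phrases this more tersely as ``either $z\in\sigma_1\cap\sigma_2$ or there is a cycle $C\subset\sigma_1\cup\sigma_2$ with $z\in C$'', while you construct $a',b'$ explicitly in the style of Theorem~\ref{Th: ch-BP}). Two minor points: in your forward direction the shortcut you exhibit has length at most $R$, so the correct parameter is $m=R$ (the paper obtains $(2R+1)$-densely $(4R+4,R)$-chordal on $\mathcal{B}$), not $m=2R+1$ as you wrote; and in the converse you should allow $x,y$ to be arbitrary points rather than graph vertices, though nothing in your argument actually uses that they are vertices.
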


\begin{proof} Suppose that $G$ is $\varepsilon$-densely $(k,m)$-chordal on $\mathcal{B}$. Consider any pair of points $x,y$ and any pair of geodesics, $\s_1,\s_2$, joining them. Then, for any point 
$z\in \s_1$, either $z\in \s_1\cap \s_2$ or there is a cycle $C\subset \s_1\cup \s_2$ with $z\in C$. If $L(C)<k$, then $d(z,\s_2)<\frac{k}{2}$. If $L(C)\geq k$, then either $d_C(z,\s_2)<\varepsilon$ or there is an $m$-shortcut in $C$ with a shortcut vertex $v$ such that $d_C(z,v)<\varepsilon$ and since $\s_1$ is geodesic, $d(v,\s_2)\leq m$. Thus, if $R=\max\{\frac{k}{2}, \varepsilon+m\}$, $d(z,\s_2)< R$ in any case. Hence, $\s_1\subset N_{R}(\s_2)$. The same argument proves that $\s_2\subset N_{R}(\s_1)$ and therefore, $d_H(\s_1,\s_2)\leq R$. 

Suppose that geodesics are stable with constant $R$. Consider any pair of points $x,y$ with 
$d(x,y)\geq 2R+2$ and two $xy$-geodesics $\s_1,\s_2$ such that $\s_1\cup \s_2$ defines a cycle $C$. 
Then, for any point $z\in \s_1$ (respectively with $\s_2$) such that $d_C(z,\s_2)>R$ (resp. 
$d_C(z,\s_1)>R$), since $d_H(\s_1,\s_2)\leq R$, $d(z,\s_2)\leq R$ (resp. $d(z,\s_1)\leq R$) and there is an strict $R$-shortcut in $C$ with a shortcut vertex $v$ such that $d_C(v,z)<R$. Thus, shortcut vertices are $(2R+1)$-dense in $C$ and $G$ is $(2R+1)$-densely $(4R+4,R)$-chordal on $\mathcal{B}$. 
\end{proof}

\begin{definition} In a graph $G$, we say that \emph{geodesics between vertices are stable} if and only if there is a constant $R\geq 0$ such that given two vertices $a,b\in G$ and any geodesic $[ab]$, then every geodesic $\s$ joining $a$ to $b$ satisfies that $d_H(\s,[ab])\leq R$.
\end{definition}

\begin{proposition}\label{P: stable} Given a graph $G$, geodesics between vertices are stable   if and only if there is some constant $k\in \NN$ so that for every pair of vertices $a,b$ with $d(a,b)\geq 2k+2$, every geodesic $[ab]$ and every vertex $v\in [ab]$ such that $d(v,\{a,b\})> k$, then $v$ is an $ab$-$N_k$-obstructing vertex. 
\end{proposition}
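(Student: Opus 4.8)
The plan is to prove the two implications separately, both by contradiction, exploiting the fact that an $ab$-$N_k$-obstructing vertex on a geodesic $[ab]$ is exactly what forces any competing geodesic $\sigma$ to pass close to $[ab]$ near that vertex. For the forward direction, assume geodesics between vertices are stable with constant $R$, and set $k := R$ (or $k := R+1$ to be safe with the strictness of inequalities). Take any pair of vertices $a,b$ with $d(a,b)\geq 2k+2$, a geodesic $[ab]$, and a vertex $v\in[ab]$ with $d(v,\{a,b\})>k$; we must show that every geodesic $\gamma$ joining $a$ to $b$ meets $N_k(v)$. But stability gives $d_H(\gamma,[ab])\leq R\leq k$, so in particular $v\in[ab]\subset N_R(\gamma)$, i.e. there is a point of $\gamma$ within distance $R\leq k$ of $v$. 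Since $\gamma$ is a path between vertices and $v$ is a vertex, one can pass to a vertex of $\gamma$ within distance $k$ of $v$, so $\gamma\cap N_k(v)\neq\emptyset$. Hence $v$ is $ab$-$N_k$-obstructing. The only subtlety here is bookkeeping between "point within $R$" and "vertex within $k$", which is why a small additive slack in the choice of $k$ is convenient.

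For the converse, suppose the obstruction condition holds with constant $k$; I aim to show geodesics between vertices are stable with constant $R = k + \tfrac12$ (or a similar explicit bound). Fix vertices $a,b$, a geodesic $[ab]$, and any other geodesic $\sigma$ from $a$ to $b$. If $d(a,b)\leq 2k+1$ then both geodesics have length at most $2k+1$, so every point of one is within $\tfrac{2k+1}{2}$ of the endpoints, hence within that distance of the other geodesic, and we are done with a constant depending only on $k$. So assume $d(a,b)\geq 2k+2$. First I show $[ab]\subset N_{R}(\sigma)$: given a vertex $v\in[ab]$, if $d(v,\{a,b\})\leq k$ then trivially $d(v,\sigma)\leq k$ since $a,b\in\sigma$; otherwise $d(v,\{a,b\})>k$, so $v$ is $ab$-$N_k$-obstructing, and applying this to the geodesic $\sigma$ yields $\sigma\cap N_k(v)\neq\emptyset$, i.e. $d(v,\sigma)\leq k$. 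Interior points of edges of $[ab]$ add at most $\tfrac12$, giving $[ab]\subset N_{k+1/2}(\sigma)$.

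The remaining — and slightly more delicate — half is the symmetric inclusion $\sigma\subset N_{R}([ab])$; this is the step I expect to be the main obstacle, since the hypothesis is phrased only in terms of vertices lying on a geodesic $[ab]$, not on $\sigma$. The clean fix is to observe that the hypothesis is symmetric in the two geodesics once we swap roles: $\sigma$ is itself a geodesic from $a$ to $b$, so for any vertex $w\in\sigma$ with $d(w,\{a,b\})>k$, applying the hypothesis to the geodesic "$\sigma$" and the competing geodesic "$[ab]$" shows $w$ is $ab$-$N_k$-obstructing and hence $[ab]\cap N_k(w)\neq\emptyset$, so $d(w,[ab])\leq k$; vertices with $d(w,\{a,b\})\leq k$ are again within $k$ of $[ab]$ since $a,b\in[ab]$, and interior edge points cost an extra $\tfrac12$. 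Combining the two inclusions gives $d_H(\sigma,[ab])\leq k+\tfrac12$, which is independent of $a,b$, so geodesics between vertices are stable. If one is worried that the hypothesis as literally stated privileges a distinguished geodesic $[ab]$, the same argument still goes through because the statement quantifies over \emph{every} geodesic $[ab]$ and \emph{every} vertex on it, so there is no asymmetry to exploit against us.
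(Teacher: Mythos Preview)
Your proposal is correct and follows essentially the same argument as the paper: in both directions you use that the hypothesis quantifies over \emph{every} geodesic between $a$ and $b$, so the roles of $[ab]$ and $\sigma$ are symmetric, and you handle vertices near the endpoints and interior edge points exactly as the paper does (yielding $d_H(\sigma_1,\sigma_2)\le k+\tfrac12$). Your extra caution about passing from a point to a vertex of $\gamma$ in the forward direction is unnecessary, since the paper's $N_r(S)=\{x\in G: d(x,S)\le r\}$ consists of all points of $G$, not just vertices, so $\gamma\cap N_R(v)\neq\emptyset$ follows directly from $d_H(\gamma,[ab])\le R$.
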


\begin{proof} Suppose that geodesics between vertices are stable with constant $R$. Then, given any two vertices $a,b\in G$ with $d(a,b)\geq 2R+2$ and any geodesic $[ab]$, every geodesic $\s$ joining $a$ to $b$ satisfies that $d_H(\s,[ab])\leq R$. Thus, for every vertex $v\in [ab]$ there is some vertex $w\in \s$ such that $d(v,w)\leq R$.  Suppose $v\in [ab]$ with $d(v,\{a,b\})> R$. 
Hence, $v$ is an $ab$-$N_R$-obstructing vertex.  

Now suppose that for every pair of vertices $a,b$ with $d(a,b)\geq 2k+2$, every geodesic $[ab]$ and every vertex $v\in [ab]$ with $d(v,\{a,b\})> k$, then $v$ is an $ab$-$N_k$-obstructing vertex.  Consider any pair of vertices $a,b\in G$ and any pair of $ab$-geodesics $\s_1,\s_2$. 
If $d(a,b)< 2k+2$, then it is trivial to check that $d_H(\s_1,\s_2)< k+1$. 
Suppose $d(x,y)\geq 2k+2$. Then, for every vertex $v\in \s_1$ such that $d(v,\{a,b\})> k$,
$\s_2\cap N_k(v)\neq \emptyset$. Therefore, it follows immediately that $\s_1\subset N_{k+1/2}(\s_2)$. The same argument proves that $\s_2\subset N_{k+1/2}(\s_1)$ and therefore, $d_H(\s_1,\s_2)< k+1$. 
\end{proof}

Let $\mathcal{B}_0$ be the family of cycles that are bigons defined by two geodesics between vertices.

\begin{proposition}\label{Prop: N(r)-obs} If $G$ is $\frac{k}{4}$-densely $(k,m)$-chordal on $\mathcal{B}_0$, then for every pair of vertices $a,b$ with $d(a,b)\geq \frac{k}{2}+4$, every geodesic $[ab]$ and every vertex $v_0$ such that $d(v_0,\{a,b\})\geq \frac{k}{4}+1$, $v_0$ is an $ab$-$N_k$-obstructing vertex. In particular, $[ab]$ contains an $ab$-$N_k$-obstructing vertex.
\end{proposition}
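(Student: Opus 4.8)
The plan is to argue by contradiction, mimicking closely the structure of the proofs of Propositions \ref{Prop: N-sep} and \ref{Prop: N-sep_chordal}, but now applied to a bigon of geodesics between vertices rather than to a single geodesic. Fix a pair of vertices $a,b$ with $d(a,b)\geq \frac{k}{2}+4$, a geodesic $[ab]$, and a vertex $v_0\in[ab]$ with $d(v_0,\{a,b\})\geq \frac{k}{4}+1$. Suppose, for contradiction, that $v_0$ is not an $ab$-$N_k$-obstructing vertex; then there is a geodesic $\s$ joining $a$ to $b$ with $\s\cap N_k(v_0)=\emptyset$. Since both $[ab]$ and $\s$ are geodesics with the same endpoints, their union contains a cycle $C\in\mathcal{B}_0$ (the bigon, or a subcycle of it if they share intermediate points); I would choose $C$ to be a cycle through $v_0$ contained in $[ab]\cup\s$, so that an arc of $C$ along $[ab]$ contains $v_0$.

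The second step is a length estimate ensuring $L(C)\geq k$, so that the chordality hypothesis applies. Because $d(v_0,\{a,b\})\geq \frac{k}{4}+1$, the sub-arc of $[ab]$ inside $C$ on each side of $v_0$ has $C$-length at least roughly $\frac{k}{4}$ — more precisely, one should track the vertices $a',b'\in[ab]\cap C$ that are the ``last'' points of $C$ on $[ab]$ before leaving onto $\s$, and note that since $\s\cap N_k(v_0)=\emptyset$, in particular $a',b'$ cannot be too close to $v_0$; combined with the distance-to-endpoints bound this gives $d_C(v_0,a')+d_C(v_0,b')\geq \frac{k}{2}$ and hence $L(C)\geq k$. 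The third step applies the hypothesis that $G$ is $\frac{k}{4}$-densely $(k,m)$-chordal on $\mathcal{B}_0$: there is a strict $m$-shortcut $\s_i$ in $C$ with an associated shortcut vertex $w$ such that $d_C(v_0,w)<\frac{k}{4}$. Since $d_C(v_0,a'),d_C(v_0,b')\geq \frac{k}{4}$ (taking $a',b'$ as the extreme points above, or using Remark \ref{R:strict} to relocate $w$ if necessary), $w$ lies in the open arc of $C$ strictly between $a'$ and $b'$, i.e.\ $w\in[ab]$ and $w$ is not an endpoint of that arc. But $[ab]$ is geodesic, so the shortcut $\s_i$ cannot join two points of $[ab]$; therefore its other endpoint lies on the $\s$-arc of $C$. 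Consequently there is a vertex $z\in\s$ with $d(z,w)\leq m$, hence $d(z,v_0)\leq d(z,w)+d(w,v_0)< m+\frac{k}{4}\leq \frac{k}{2}\leq k$ using $k\geq 4m$ (Remark \ref{k 4}), so $z\in\s\cap N_k(v_0)$, contradicting $\s\cap N_k(v_0)=\emptyset$.

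The main obstacle I anticipate is bookkeeping the constants cleanly when passing between $d_C$-distance along the cycle and ambient distance $d$, and in particular verifying the threshold $\frac{k}{4}+1$ is exactly what makes both the ``$L(C)\geq k$'' estimate and the ``$w$ is interior to the $[ab]$-arc'' conclusion go through simultaneously; one has to be careful that the shortcut vertex $w$ supplied by $\frac{k}{4}$-density (which only guarantees $d_C(v_0,w)<\frac{k}{4}$, strict) genuinely avoids the transition vertices $a',b'$, which is where the ``$+1$'' in the hypothesis on $d(v_0,\{a,b\})$ is spent. The final sentence (``In particular, $[ab]$ contains an $ab$-$N_k$-obstructing vertex'') is then immediate: since $d(a,b)\geq\frac{k}{2}+4$, the midpoint region of $[ab]$ contains a vertex at distance $\geq\frac{k}{4}+1$ from both $a$ and $b$ (choose any vertex $v_0\in[ab]$ with $\frac{k}{4}+1\leq d(a,v_0)$ and $d(v_0,b)\geq\frac{k}{4}+1$, which exists because the total length exceeds $\frac{k}{2}+2$), and apply the statement just proved to that $v_0$.
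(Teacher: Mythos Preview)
Your proposal is correct and follows essentially the same route as the paper's proof: assume a second geodesic $\s$ misses the relevant neighborhood of $v_0$, extract from $[ab]\cup\s$ a bigon cycle $C\in\mathcal{B}_0$ through $v_0$ of length $\geq k$, apply $\frac{k}{4}$-density to get a shortcut vertex on the $[ab]$-arc near $v_0$, and use that both arcs are geodesic to force the shortcut across to $\s$, yielding a point of $\s$ within distance $m+\frac{k}{4}<k$ of $v_0$. The only cosmetic difference is that the paper splits into cases according to whether $\gamma_0$ already meets $N_{k/4}(v_0)$ (so the transition points are only guaranteed to lie at distance $\geq\lceil k/4\rceil$ from $v_0$, which is exactly where the ``$+1$'' is needed), whereas you assume directly that $\s$ misses the larger ball $N_k(v_0)$, which makes the length estimate and the ``$w$ is interior'' step easier than you anticipate --- your transition points $a',b'\in\s$ then satisfy $d(v_0,a'),d(v_0,b')>k$, far exceeding $k/4$.
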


\begin{proof} Consider any pair of vertices $a,b$ with $d(a,b)\geq \frac{k}{2}+4$, any geodesic 
$[ab]$ and any vertex $v_0\in [ab]$ with $d(v_0,\{a,b\})\geq \frac{k}{4}+1$. Let $a'$ be the vertex in $[av_0]\subset [ab]$ with $d(a',v_0)=\left\lceil \frac{k}{4}\right\rceil$  and $b'$ be the vertex in $[v_0b]\subset [ab]$ with $d(v_0,b')=\left\lceil \frac{k}{4}\right\rceil$. Therefore, $d(a',b')\geq \frac{k}{2}$, $a'\neq a$ and $b'\neq b$.

If there is no geodesic joining $a$ to $b$ disjoint from $N_{k/4}(v_0)$ and we are done.

Suppose there is some geodesic $\gamma_0$   joining $a$ to $b$ such that 
$\gamma_0 \cap N_{k/4}(v_0)=\emptyset$. Then, $[ab]\cup \gamma_0$ contains a cycle $C$ (with possibly 
$C=[ab]\cup \gamma_0$) composed by two geodesics $\gamma_1=[a''b'']$ with $[a'b'] \subset [a''b'']\subset [ab]$ and $\gamma_2\subset \gamma_0$ joining also $a''$ to $b''$. Clearly, $L(C)\geq k$. Since $G$ is $\frac{k}{4}$-densely $(k,m)$-chordal on $\mathcal{B}_0$, then there is a strict shortcut $\sigma$ with $L(\sigma)\leq m$ joining two  vertices in $C$ with a shortcut vertex $v_1$ in $N_{k/4}(v_0)$. 
Also, since $\gamma_1$ and $\gamma_2$ are geodesics, then $\sigma$ joins $v_1$ to a vertex $v_2$ in 
$\gamma_2\subset \gamma_0$. 
Therefore, $d(v_2,v_0)\leq m+ \frac{k}{4}< k$ (see Remark \ref{k 4}) and $\gamma_0\cap N_k(v_0)\neq \emptyset$. 
\end{proof}

\begin{theorem}\label{T: stability 2}  Given a graph $G$, geodesics between vertices are stable if and only if there exist constants $\varepsilon>0$ and $k,m\in \NN$ such that $G$ is $\varepsilon$-densely $(k,m)$-chordal on $\mathcal{B}_0$.
\end{theorem}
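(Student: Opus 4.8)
The plan is to prove both implications in close parallel with the proof of Theorem \ref{T: stability}, replacing the family $\mathcal{B}$ of all bigons by the family $\mathcal{B}_0$ of bigons defined by two geodesics between vertices, and being careful that everything stays within the world of vertex endpoints. For the forward direction, I would assume $G$ is $\varepsilon$-densely $(k,m)$-chordal on $\mathcal{B}_0$ and take two vertices $a,b$ together with two $ab$-geodesics $\s_1,\s_2$. For any vertex $z\in\s_1$, either $z\in\s_2$, or $z$ lies on a cycle $C\subset\s_1\cup\s_2$ which is itself a bigon defined by two geodesics between vertices (the two subarcs of $\s_1$ and $\s_2$ between consecutive common vertices), so $C\in\mathcal{B}_0$. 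If $L(C)<k$ then $d(z,\s_2)<k/2$; if $L(C)\geq k$, the $\varepsilon$-dense set of shortcut vertices gives a shortcut vertex $v$ with $d_C(z,v)<\varepsilon$ attached to an $m$-shortcut, and since $\s_1$ is geodesic this shortcut cannot join two points of $\s_1$, hence $d(v,\s_2)\leq m$ and $d(z,\s_2)<\varepsilon+m$. So with $R=\max\{k/2,\varepsilon+m\}$ we get $\s_1\subset N_R(\s_2)$, and symmetrically the reverse, i.e. $d_H(\s_1,\s_2)\leq R$; this is exactly stability of geodesics between vertices.

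For the converse, I would assume geodesics between vertices are stable with constant $R$ and exhibit explicit constants. Take any $C\in\mathcal{B}_0$, say $C=\s_1\cup\s_2$ with $\s_1,\s_2$ two $xy$-geodesics between vertices $x,y$; we may assume $d(x,y)\geq 2R+2$, since otherwise $L(C)\leq 4R+4$ and the chordality condition is vacuous for $k=4R+4$. For any vertex $z\in\s_1$ with $d_C(z,\s_2)>R$ (and symmetrically for $z\in\s_2$), stability gives $d(z,\s_2)\leq R$, so there is a strict $R$-shortcut in $C$ from $z$ into $\s_2$, and by Remark \ref{R:strict} an associated shortcut vertex $v$ with $d_C(z,v)<R$. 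This produces a $(2R+1)$-dense set of shortcut vertices on $C$ with all shortcuts of length at most $R$, so $G$ is $(2R+1)$-densely $(4R+4,R)$-chordal on $\mathcal{B}_0$.

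The two steps that require a small amount of care, and which I expect to be the only real subtlety, are: first, checking that the cycles $C$ arising in the forward argument genuinely belong to $\mathcal{B}_0$ — i.e. that each subarc of $\s_i$ between two consecutive common vertices of $\s_1$ and $\s_2$ is again a geodesic between vertices, which is clear since a subpath of a geodesic is a geodesic and the common vertices are vertices of $G$; and second, the endpoint-parity bookkeeping in the converse, ensuring that the shortcut vertex $v$ lies strictly inside $C$ and that the density constant $2R+1$ (arising from $d_C(z,v)<R$ plus the at-most-$R$ strip near the endpoints where no shortcut is needed) is correct. Since $\mathcal{B}_0\subset\mathcal{B}$, one could alternatively try to derive the forward direction of Theorem \ref{T: stability 2} from Theorem \ref{T: stability}, but that implication goes the wrong way — being chordal on the smaller family $\mathcal{B}_0$ is a weaker hypothesis — so the direct argument above is the right approach, and it in fact yields the sharper statement that stability of geodesics \emph{between vertices} already suffices.
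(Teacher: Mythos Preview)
Your proof is correct, and for the converse direction it is essentially identical to the paper's argument (same constants $(2R+1)$-densely $(4R+4,R)$-chordal on $\mathcal{B}_0$).

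For the forward direction, however, you take a genuinely different route. You argue directly, exactly as in Theorem~\ref{T: stability}: decompose $\sigma_1\cup\sigma_2$ into cycles bounded by consecutive common vertices, observe that each such cycle lies in $\mathcal{B}_0$ (since subarcs of geodesics are geodesics and the branching points are vertices), and apply the chordality hypothesis cycle by cycle. The paper instead goes through the machinery of neighbor obstructors: it first enlarges $k$ to $k'=\max\{4\varepsilon,k\}$, applies Proposition~\ref{Prop: N(r)-obs} to show that interior vertices of any geodesic are $ab$-$N_{k'}$-obstructing, and then invokes Proposition~\ref{P: stable} to deduce stability. Your direct argument is shorter, avoids the detour through obstructing vertices, and yields the explicit constant $R=\max\{k/2,\varepsilon+m\}$. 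The paper's route, on the other hand, earns its keep by exhibiting the link between chordality on $\mathcal{B}_0$ and the obstruction characterisation of stability, which is one of the themes of Section~\ref{S:6}. One small point: you check the Hausdorff inclusion only for vertices $z\in\sigma_1$; the argument works verbatim for arbitrary points $z$ (the $\varepsilon$-density of shortcut vertices is with respect to $d_C$, not just on vertices), so you can simply drop the word ``vertex'' there.
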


\begin{proof} Suppose that $G$ is $\varepsilon$-densely $(k,m)$-chordal on $\mathcal{B}_0$. By  
Proposition \ref{Prop: N(r)-obs}, if $k'=\max\{4\varepsilon,k\}$, then for every pair of vertices $a,b$ with $d(a,b)\geq \frac{k'}{2}+4$, every geodesic $[ab]$ and every vertex $v_0$ such that $d(v_0,\{a,b\})\geq \frac{k'}{4}+1$, $v_0$ is an $ab$-$N_{k'}$-obstructing vertex. 
Thus, by Proposition \ref{P: stable}, geodesics are stable with constant $R=\frac{k'}{4}+2$.

Let us suppose that geodesics between vertices are stable with constant $R$. 
Let $a,b$ be two vertices with $d(a,b)\geq 2R+2$ and $C$ be a cycle which is a bigon defined by two $ab$-geodesics, $\s_1,\s_2$. Therefore, $L(C)\geq 4R+4$. Consider any vertex $v\in \s_1$ (respectively, $\s_2$) such that $d(v,\{a,b\})>R$. Then, since geodesics between vertices are stable with parameter $R$, $v\in N_R(\s_2)$ (respectively, $\s_1$) and there is a strict $R$-shortcut in $C$ with an associated shortcut vertex $w$ such that $d_C(v,w)<R$. Therefore, shortcut vertices are $(2R+1)$-dense in $C$ and $G$ is $(2R+1)$-densely $(4R+4,R)$-chordal on $\mathcal{B}_0$.
\end{proof}

The following example shows that having stable geodesics between vertices does not imply that geodesics are stable. 

\begin{example} Consider the family of odd cycles $\{C_{2k+1} \, : \, k\in \NN \}$ and suppose we fix a vertex $v_k$ in each cycle and we define a connected graph $G$ identifying the family $\{v_k \, : \, k\in \NN\}$ as a single vertex $v$. Notice that in $G$ geodesics between vertices are unique. If two vertices belong to the same cycle $C_{2k+1}$, then the geodesic is contained in the cycle and it is clearly unique. Otherwise, the geodesic is the union of the two (unique) shortest paths joining the vertices to $v$. Thus, geodesics between vertices are stable with constant 0. 

Let $m_k$ be the midpoint of an edge in $C_{2k+1}$ such that $d(m_k,v)=k+\frac{1}{2}$. 
Then, $C_{2k+1}$ is a bigon in $G$ defined by two geodesics, $\s_1,\s_2$  joining $m_k$ to $v$ and 
$d_H(\s_1,\s_2)=\frac{k}{2}+\frac{1}{4}$ with $k$ arbitrarily large. 
\end{example}

\begin{remark} Notice that the same property that characterizes being quasi-isometric to a tree (Corollary \ref{C: ch-BP}), also characterizes being hyperbolic, when restricted to triangles (Theorem \ref{Th: carac2}), having stable geodesics, when restricted to bigons (Theorem \ref{T: stability}) and having stable geodesics between vertices, when restricted to bigons between vertices (Theorem \ref{T: stability 2}). 
\end{remark}

\smallskip

The proof of Proposition \ref{Prop: N(r)-obs} can be adapted to prove also the following:

\begin{proposition}\label{Prop: N(r)-obs 2} If $G$ is $(\frac{k}{4}-m)$-densely $(k,m)$-chordal on $\mathcal{B}_0$ with $k> 4m$, then for every geodesic $[ab]$ with $d(a,b)\geq \frac{k}{2}+2$ and every pair of vertices $a',b'\in [ab]$ with $d(\{a',b'\},\{a,b\})\geq m+1$ and such that $d(a',b')\geq \frac{k}{2}-2m$, $[a'b']\subset [ab]$ is an $ab$-$N_m$-obstructing set. In particular, for every pair of vertices $a,b$ in $G$ with $d(a,b)\geq \frac{k}{2}+2$ there is a geodesic $\sigma$ of length $\frac{k}{2}-2m$ or $\frac{k+1}{2}-2m$ such that $\sigma$ is $ab$-$N_m$-obstructing.
\end{proposition}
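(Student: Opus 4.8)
The plan is to adapt almost verbatim the proof of Proposition \ref{Prop: N(r)-obs}, keeping track of the extra slack that comes from allowing the shortcut to have positive length $m$ rather than landing exactly on $v_0$. Concretely, given a geodesic $[ab]$ with $d(a,b)\geq \frac{k}{2}+2$ and vertices $a',b'\in[ab]$ with $d(\{a',b'\},\{a,b\})\geq m+1$ and $d(a',b')\geq \frac{k}{2}-2m$, I would argue by contradiction: assume there is a geodesic $\gamma_0$ from $a$ to $b$ with $\gamma_0\cap N_m([a'b'])=\emptyset$. As in the earlier proof, $[ab]\cup\gamma_0$ then contains a cycle $C$ which is a bigon made of two geodesics $\gamma_1=[a''b'']$ with $[a'b']\subset[a''b'']\subset[ab]$ and $\gamma_2\subset\gamma_0$. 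The length bound comes from the lengths of the two pieces: since $d(a',b')\geq \frac{k}{2}-2m$ and each of $\gamma_1,\gamma_2$ is a geodesic between the shared endpoints, one gets $L(C)\geq 2\,d(a'',b'')\geq 2(\frac{k}{2}-2m+2) > k$ using $d(\{a',b'\},\{a,b\})\geq m+1$ to push $a'',b''$ at least one step further out; this is the place where the hypothesis $k>4m$ is needed to keep everything consistent.

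Next I would invoke that $G$ is $(\frac{k}{4}-m)$-densely $(k,m)$-chordal on $\mathcal{B}_0$ applied to the bigon $C$ (which is indeed a bigon between two geodesics joining vertices, hence in $\mathcal{B}_0$). This yields strict shortcuts of length at most $m$ whose shortcut vertices form a $(\frac{k}{4}-m)$-dense set in $(C,d_C)$. Since $[a'b']$ has $d_C$-length at least $\frac{k}{2}-2m = 2(\frac{k}{4}-m)$, some shortcut vertex $v_1$ lies within $d_C$-distance $\frac{k}{4}-m$ of the midpoint-type portion of $[a'b']$, so in particular $v_1\in N_{k/4-m}([a'b'])$ — more precisely $v_1$ lies on $[a'b']$ at $d_C$-distance $<\frac{k}{4}-m$ from some reference point on $[a'b']$, hence within distance $\frac{k}{4}-m$ of $[a'b']$ trivially, and I would arrange $a',b'$ so that the dense set forces a shortcut vertex actually \emph{on} $[a'b']$. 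Because $\gamma_1$ and $\gamma_2$ are both geodesics, the shortcut cannot connect two points of $\gamma_1$ nor two points of $\gamma_2$, so it joins $v_1\in\gamma_1$ (in fact in $[a'b']$, after the bookkeeping) to a vertex $v_2\in\gamma_2\subset\gamma_0$. Then $d(v_2,[a'b'])\leq d(v_2,v_1)\leq m$, so $v_2\in\gamma_0\cap N_m([a'b'])$, contradicting the assumption. This proves $[a'b']$ is $ab$-$N_m$-obstructing.

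For the ``in particular'' clause, given $a,b$ with $d(a,b)\geq\frac{k}{2}+2$, I would simply choose $a'$ to be the vertex on $[ab]$ at distance $m+1$ from $a$ and then $b'$ on $[ab]$ with $d(a',b')$ equal to $\frac{k}{2}-2m$ or $\frac{k+1}{2}-2m$ according to the parity of $k$ (so that $b'$ is a genuine vertex), checking that this still leaves $d(b',b)\geq m+1$, which holds because $d(a,b)\geq\frac{k}{2}+2 = (m+1)+(\frac{k}{2}-2m)+(m+1)$. Applying the first part to this $[a'b']$ gives the stated geodesic $\sigma=[a'b']$.

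The main obstacle I expect is the length bookkeeping: making sure that with the weaker density parameter $\frac{k}{4}-m$ the dense set of shortcut vertices really intersects the interior segment $[a'b']$ (and not just $C$), and that the resulting shortcut vertex is close enough to $[a'b']$ — this is exactly why the hypotheses are stated with the offsets $m+1$ and $\frac{k}{2}-2m$ rather than the cleaner values in Proposition \ref{Prop: N(r)-obs}, and one must check the parity cases and the inequality $k>4m$ carefully so that $\frac{k}{4}-m>0$ and $\frac{k}{2}-2m\geq 2$ (forcing $[a'b']$ to be nontrivial). Everything else is a direct transcription of the earlier argument.
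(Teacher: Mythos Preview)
Your approach is exactly the one the paper takes, and every step is right except the length estimate for the cycle $C$. You write $L(C)\geq 2(\tfrac{k}{2}-2m+2)>k$, but $2(\tfrac{k}{2}-2m+2)=k-4m+4$, which fails to be $\geq k$ as soon as $m\geq 2$. Pushing $a'',b''$ ``at least one step'' beyond $a',b'$ is not enough; the point of the hypothesis $d(\{a',b'\},\{a,b\})\geq m+1$ is that you can (and must) push them $m$ steps out. The paper sets $a''\in[aa']$ and $b''\in[b'b]$ with $d(a',a'')=d(b',b'')=m$, so that $d(a'',b'')\geq \tfrac{k}{2}-2m+2m=\tfrac{k}{2}$. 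Then, since $\gamma_0$ avoids $N_m([a'b'])$ and every vertex of $[a''b'']$ lies in $N_m([a'b'])$, the cycle $C\subset [ab]\cup\gamma_0$ must contain the whole segment $[a''b'']$; as $\gamma_1$ and $\gamma_2$ are geodesics between the same endpoints, $L(C)=2L(\gamma_1)\geq 2d(a'',b'')\geq k$. (Alternatively, you can note that the actual branch points of $C$ on $[ab]$ lie on $\gamma_0$, hence outside $N_m([a'b'])$, so at distance $\geq m+1$ from $a',b'$, giving $L(C)\geq k+4$.)

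With that fix the rest of your outline coincides with the paper's proof: take the midpoint $c$ of $[a'b']$, use $(\tfrac{k}{4}-m)$-density on the bigon $C\in\mathcal{B}_0$ to get a shortcut vertex $v_1$ with $d_C(v_1,c)<\tfrac{k}{4}-m$, observe that the half-length of $[a'b']$ is at least $\tfrac{k}{4}-m$ so $v_1\in[a'b']$, and conclude exactly as you do that the $m$-shortcut lands on $\gamma_2\subset\gamma_0$, contradicting $\gamma_0\cap N_m([a'b'])=\emptyset$. Your ``in particular'' paragraph is fine.
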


\begin{proof} Consider any geodesic $[ab]$ with $d(a,b)\geq \frac{k}{2}+2$ and any pair of vertices $a',b'\in [ab]$ with $d(\{a',b'\},\{a,b\})\geq m+1$ and $d(a',b')\geq \frac{k}{2}-2m$. Let $a''$ be the vertex in $[aa']\subset [ab]$ with $d(a',a'')=m$  and $b''$ be the vertex in $[b'b]\subset [ab]$ with $d(b',b'')=m$. Therefore, $d(a'',b'')\geq \frac{k}{2}$.

Suppose that there is some geodesic $\gamma_0$   joining $a$ and $b$ such that $\gamma \cap N_m([a'b'])=\emptyset$. Then, $[ab]\cup \gamma_0$ contains a cycle $C$ composed by two geodesics: $\gamma_1$ with $[a''b''] \subset \g_1 \subset [ab]$ and $\gamma_2\subset \gamma_0$. 
Clearly, $L(C)\geq k$. 

Consider the midpoint $c$ in $[a'b']$. Since $G$ is $(\frac{k}{4}-m)$-densely $(k,m)$-chordal on 
$\mathcal{B}_0$, then there is a strict shortcut $\sigma$ with $L(\sigma)\leq m$ joining two  vertices in $C$ with a shortcut vertex $v_1$ such that $d_C(v_1,c)\leq \frac{k}{4}-m$, and hence $v_1\in [a'b']$.  Also, since $\gamma_1$ and $\gamma_2$ are geodesics, then $\sigma$ joins $v_1$ to a vertex, $v_2$, in $\gamma_2\subset \gamma_0$. 
Therefore, $d(v_2,[a'b'])\leq m$ and $\gamma_0\cap N_m([a'b'])\neq \emptyset$ leading to contradiction. 
\end{proof}



\begin{thebibliography}{999}


\bibitem{B} Bermudo, S.; Carballosa, W.;  Rodríguez, J. M.;  Sigarreta, J. M. On the hyperbolicity of edge-chordal and path-chordal graphs. \emph{Filomat} \textbf{2016}, 30 (9), 2599--2607.

\bibitem{BRS2} Bermudo, S.; Rodr\'{\i}guez, J. M.;  Rosario, O.;  Sigarreta, J. M. 
Small values of the hyperbolicity constant in graphs. \emph{Discrete Mathematics} \textbf{2016}, 339 (12), 3073--3084. 


\bibitem{BRS} Bermudo, S.;  Rodr\'{\i}guez, J. M.;   Sigarreta, J. M. Computing the hyperbolicity constant. \emph{Comput. Math. Appl.} \textbf{2011}, 62, 4592--4595.

\bibitem{BRST} Bermudo, S.;  Rodr\'{\i}guez, J. M.;   Sigarreta, J. M.;  Tour{\'\i}s, E. Hyperbolicity and complement of graphs. \emph{Appl. Math. Letters} \textbf{2011}, 24, 1882--1887.

\bibitem{BRSV} Bermudo, S.;  Rodr\'{\i}guez, J. M.;   Sigarreta, J. M.;  Vilaire, J-M. Gromov hyperbolic graphs. \emph{Discr. Math.} \textbf{2013}, 313, 1575--1585.

\bibitem{BBF} Bestvina, M.; Bromberg, K.; Fujiwara, K. Constructing group actions on quasi-trees and applications to mapping class groups. \emph{Publications mathématiques de l'IHÉS} 
\textbf{2015}, 122 (1), 1--64.

\bibitem{BP} Blair, J.; Peyton, B. \emph{An introduction to chordal graphs and clique trees, 
Graph Theory and Sparse Matrix Multiplication},  IMA Volumes in Mathematics and its Applications, Springer, Berlin, Germany, 1993,  Volume 56, pp. 1--29.


\bibitem{B-H}  Bridson, M.;   Haefliger, A. \emph{Metric spaces of non-positive curvature.} Springer-Verlag, Berlin, Germany, 1999.

\bibitem{BKM}  Brinkmann, G.; Koolen, J.;   Moulton, V. On the hyperbolicity of chordal graphs. \emph{Ann. Comb.} \textbf{2001}, 5, 61--69.


\bibitem{BoS} Bonk, M.;  Schramm, O. Embeddings of Gromov hyperbolic
spaces. \emph{Geom. Funct. Anal.} \textbf{2000}, 10, 266--306.

\bibitem{Bu-Bu}  Burago, D.;  Burago, Y.;  Ivanov, S. \emph{A course in metric
geometry.} Graduate Studies in Mathematics. AMS,
Providence, RI, USA, 2001, Volume 33.

\bibitem{BS}  Buyalo, S.;  Schroeder, V. \emph{Elements of Asymptotic
Geometry.} EMS Monographs in Mathematics. Germany, 2007.

\bibitem{CPRS}  Carballosa, W.;  Pestana, D.;  Rodr\'{\i}guez, J. M.;   Sigarreta, J. M. Distortion of the hyperbolicity constant of a graph. \emph{Electr. J. Comb.} \textbf{2012}, 19 (1), \# P67.

\bibitem{CRSV} Carballosa, W.;  Rodr\'{\i}guez, J. M.; Sigarreta, J. M.;   Villeta, M. Gromov hyperbolicity of line graphs. \emph{Electr. J. Comb.} \textbf{2011}, 18 (1), \# P210.

\bibitem{Cas}  Cashen, C. H. A Geometric Proof of the Structure Theorem for Cyclic Splittings of Free Groups. \emph{Topology Proceedings} \textbf{2017}, 50, 335--349.

\bibitem{CMN} Clauset, A.;  Moore, C.;  Newman, M.E.J. Hierarchical structure and the prediction of missing links in networks. \emph{Nature},
 \textbf{2008}, 453, 98--101.


\bibitem{CDEHV}  Chepoi, V.;  Dragan, F. F.;  Estellon, B.;  Habib, M.;  Vaxes, Y. Notes on diameters, centers, and approximating trees of $\delta$-hyperbolic geodesic spaces and graphs. \emph{Electr. Notes Discr. Math.} \textbf{2008}, 31, 231-234.


\bibitem{D}  Dirac, G. A. On rigid circuit graphs. \emph{Abh. Math. Sem. Univ. Hamburg.} \textbf{1961}, 25, 71--76.



\bibitem{DHH}  Dress, A.;   Holland, B.;  Huber, K.T.;  Koolen, J.H.;  Moulton, V.;  Weyer-Menkhoff, J. $\Delta$ additive and $\Delta$ ultra-additive maps, Gromov's trees, and the Farris transform. \emph{Discrete Appl. Math.}  \textbf{2005}, 146 (1), 51--73.


\bibitem{DMT}  Dress, A.;  Moulton, V.;  Terhalle, W. T-theory: an overview. \emph{Europ. J. Combin.} \textbf{1996}, 17, 161--175.

\bibitem{K50}  Frigerio, R.;  Sisto, A. Characterizing hyperbolic spaces and real trees. \emph{Geom. Dedicata} \textbf{2009}, 142, 139-149.




\bibitem{Gr} Gromov, M. \emph{Hyperbolic groups, in}. Essays in Group Theory,
Mathematical Science Research Institute Publications, S.M. Gersten
(Ed.), Springer, New York, USA, 1987, Volume 8, pp. 75--263.


\bibitem{Gr2} Gromov, M. \emph{Metric structures for Riemannian and
Non-Riemannian spaces.} Modern Birkhäuser Classics. Boston, USA, 
2007.


\bibitem{G-H}  Gyhs, E.;   de la Harpe, P. \emph{Sur le groupes
hyperboliques d'après Mikhael Gromov}. Progr.Math., Birkhäuser,
Boston, MA, USA, 1990, Volume 83.

\bibitem{Ha}  H\"ast\"o, P. A. Gromov hyperbolicity of the $j_G$ and $\tilde{\jmath}_G$ metrics. \emph{Proc. Amer. Math. Soc.} \textbf{2006}, 134, 1137--1142.


\bibitem{K21}  Jonckheere, E. A. Contr\^ole du traffic sur les r\'eseaux \`a g\'eom\'etrie hyperbolique--Vers une th\'eorie g\'eom\'etrique de la s\'ecurit\'e l'acheminement de l'information. \emph{J. Europ. Syst. Autom.} \textbf{2002}, 8, 45--60.

\bibitem{K22}  Jonckheere, E. A.; Lohsoonthorn, P. Geometry of network security. \emph{Amer. Control Conf.} ACC, \textbf{2004}, 111--151.

\bibitem{KP}  Krioukov, D.;  Papadopoulos, F.;  Kitsak, M.;  Vahdat, A.;  Boguñá, M. Hyperbolic geometry of complex networks. \emph{Phys.
Rev. E},  \textbf{2010}, 82 (3), \# 036106.


\bibitem{KM}  Krithika, R.;  Mathew, Rogers;  Narayanaswamy, N. S.,  Sadagopan, N. 
A Dirac-type Characterization of $k$-chordal Graphs.
\emph{Discrete Mathematics} \textbf{2013}, 313 (24), 2865--2867.

\bibitem{Man}  Manning, J. F. 
Geometry of pseudocharacters.
\emph{Geometry and Topology.} \textbf{2005}, 9, 1147--1185.

\bibitem{MP12}  Mart\'{\i}nez-P\'erez, A. Real-valued functions and metric spaces quasi-isometric to trees. \emph{Ann. Acad. Sci. Fenn. Math.} \textbf{2012}, 37, 525--538.


\bibitem{MP}  Mart\'{\i}nez-P\'erez, A. Chordality properties and hyperbolicity on graphs. \emph{Electr. J. Comb.} \textbf{2016}, 23 (3), \# P3.51.


\bibitem{MRSV}  Michel, J.;  Rodr\'{\i}guez, J. M.; Sigarreta, J. M.;  Villeta, M. Hyperbolicity and parameters of graphs. \emph{Ars Comb.} \textbf{2011}, 100, 43--63.




\bibitem{PeRSV}  Pestana, D.;  Rodr\'{\i}guez, J. M.; Sigarreta, J. M.;   Villeta, M. Gromov hyperbolic cubic graphs. \emph{Central Europ. J. Math.} \textbf{2012}, 10 (3), 1141--1151.

\bibitem{PRSV}  Portilla, A.;  Rodr\'{\i}guez, J. M.; Sigarreta, J. M.;   Vilaire, J.-M. Gromov hyperbolic tessellation graphs. \emph{Utilitas Math.} \textbf{2015}, 97, 193--212.


\bibitem{PRT1} Portilla, A.;  Rodr\'{\i}guez, J. M.;   Tour{\'\i}s, E. Gromov hyperbolicity through decomposition of metric spaces II. \emph{J. Geom. Anal.} \textbf{2004}, 14, 123--149.

\bibitem{PRT2} Portilla, A.;  Rodr\'{\i}guez, J. M.;   Tour{\'\i}s, E. Stability of Gromov hyperbolicity. \emph{J. Adv. Math. Studies} \textbf{2009}, 2 (2), 77--96.

\bibitem{PT}  Portilla, A.;   Tour{\'\i}s, E. A characterization of Gromov hyperbolicity of surfaces with variable negative curvature. \emph{Publ. Mat.} \textbf{2009}, 53, 83--110.

\bibitem{RS}  Rodr\'{\i}guez, J. M.; Sigarreta, J. M. Bounds on Gromov hyperbolicity constant in graphs. \emph{Proc. Indian Acad. Sci. Math. Sci.} \textbf{2012}, 122, 53--65.

\bibitem{RSTY} Rodr\'{\i}guez, J. M.; Sigarreta, J. M.;  Torres-Nu\~nez, Y. Computing the hyperbolicity constant of a cubic graph. \emph{Int. J. Comput. Math.} \textbf{2014}, 91 (9), 1897--1910.

\bibitem{RSVV} Rodr\'{\i}guez, J. M.; Sigarreta, J. M.;  Vilaire, J.-M.;   Villeta, M. On the hyperbolicity constant in graphs. \emph{Discr. Math.} \textbf{2011}, 311, 211--219.

\bibitem{RT1} Rodr\'{\i}guez, J. M.;   Tour{\'\i}s, E. Gromov hyperbolicity through decomposition of metric spaces. \emph{Acta Math. Hung.} \textbf{2004}, 103, 53--84.


\bibitem{Sha1}  Shang, Y. Lack of Gromov-hyperbolicity in small-world networks. \emph{Cent. Eur. J. Math.} \textbf{2012}, 10 (3), 1152--1158.

\bibitem{Sha2}  Shang, Y. Non-hyperbolicity of random graphs with given expected degrees. \emph{Stoch. Models} \textbf{2013}, 29 (4), 451--462.



\bibitem{Si}  Sigarreta, J. M. Hyperbolicity in median graphs. \emph{Proc. Indian Acad. Sci. Math. Sci.} \textbf{2013}, 123, 455--467.


\bibitem{SV}  Sreenivasa Kumar, P.;   Veni Madhavan, C.E. Minimal vertex separators of chordal graphs. \emph{Discrete Applied Mathematics} \textbf{1998}, 89, 155--168.


\bibitem{T}  Tour{\'\i}s, E. Graphs and Gromov hyperbolicity of non-constant negatively curved surfaces. \emph{J. Math. Anal. Appl.} \textbf{2011}, 380 (2), 865--881.

\bibitem{Vai}  V\"{a}is\"{a}l\"{a}, J. Gromov hyperbolic spaces. \emph{Expositiones Mathematicae.} \textbf{2005}, 23 (3), 187--231.

\bibitem{WZ}  Wu, Y.;  Zhang, C. Hyperbolicity and chordality of a graph. \emph{Electr. J. Comb.} \textbf{2011}, 18, \# P43.



\end{thebibliography}
\end{document}